\newtheorem{theorem}{Theorem}
\newtheorem{lemma}[theorem]{Lemma}
\theoremstyle{definition}
\newtheorem{remark}[theorem]{Remark}
\newcommand{\cref}[1]{Corollary~\ref{c.#1}}
\numberwithin{equation}{section}
\numberwithin{theorem}{section}
\newcommand{\Z}{\mathbb{Z}}
\newcommand{\R}{\mathbb{R}}
\newcommand{\E}{\mathbb{E}}
\newcommand{\F}{\mathcal{F}}
\newcommand{\scF}{\mathscr{F}}
\newcommand{\G}{\mathcal{G}}
\newcommand{\bT}{\mathbb{T}}
\newcommand{\bP}{\mathbb{P}}
\newcommand{\cF}{\mathcal{F}}
\newcommand{\cG}{\mathcal{G}}
\newcommand{\cL}{\mathcal{L}}
\newcommand{\cH}{\mathcal{H}}
\newcommand{\cN}{\mathcal{N}}
\renewcommand{\tilde}{\widetilde}
\renewcommand{\div}{\mathrm{div}}
\newcommand{\ol}{\overline}
\newcommand{\eps}{\varepsilon}
\newcommand{\gam}{\gamma}
\newcommand{\bone}{\mathbbm{1}}
\newcommand{\Var}{\mathrm{Var}}
\title{Limiting distribution of elliptic homogenization error with periodic diffusion and random potential}
\author{Wenjia Jing}
\address{Department of Mathematics,
  University of Chicago,
  5734 S. University Avenue,
  Chicago, IL 60637, USA.}
\email{wjing@math.uchicago.edu}
\date{\today}
\begin{document}
 
\maketitle

\begin{abstract}

We study the limiting probability distribution of the homogenization error for second order elliptic equations in divergence form with highly oscillatory periodic conductivity coefficients and highly oscillatory stochastic potential. The effective conductivity coefficients are the same as those of the standard periodic homogenization, and the effective potential is given by the mean. We show that in the short range correlation setting, the limiting distribution of the random part of the homogenization error, as random elements in proper Hilbert spaces, is Gaussian and can be characterized by the homogenized Green's function, the homogenized solution and the statistics of the random potential. Similar results hold for random potentials that are functions of long range correlated Gaussian random fields. These results generalize previous ones in the setting with slowly varying diffusion coefficients, and the current setting with fast oscillations in the differential operator requires new methods to prove compactness of the probability distributions of the random fluctuation.

\smallskip

\noindent{\bf Key words}: periodic and stochastic homogenization, random field, probability measures on Hilbert space, weak convergence of probability distributions

\end{abstract}


\section{Introduction}

In this article we study the limiting distribution, in certain Hilbert spaces, of the homogenization error for second order elliptic equations in divergence form with highly oscillatory periodic diffusion coefficients and highly oscillatory random potential. 

More precisely, we consider the following Dirichlet problem on an open bounded subset $D \subset \R^n$, with homogeneous boundary condition and a source term $f \in L^2(D)$,
\begin{equation}
\label{e.rpde}
\left\{
\begin{aligned}
&-\frac{\partial}{\partial x_i} \left(a_{ij} \left(\frac{x}{\eps}\right) \frac{\partial u^\eps}{\partial x_j} (x,\omega)\right) + q \left(\frac{x}{\eps},\omega\right) u^\eps(x,\omega) = f(x),\quad &&\quad x\in D,\\
& u^\eps(x) = 0, \quad &&\quad x \in \partial D.
\end{aligned}
\right.
\end{equation}
The conductivity coefficients $(a_{ij}(\frac{\cdot}{\eps}))$ and the potential $q(\frac{\cdot}{\eps},\omega)$ are highly oscillatory in space, and $0<\eps \ll 1$ indicates the small scale on which these coefficients oscillate. We assume that the conductivity coefficients are deterministic and periodic, and the potential is a stationary random field on some probability space $(\Omega,\F,\bP)$. More precise assumptions are given in section \ref{sec:results}. It is well known that, under mild assumptions like stationary ergodicity of $q(x,\omega)$, the equation above homogenizes, i.e. $u^\eps$ converges, almost surely in $\Omega$, weakly in $H^1(D)$ and strongly in $L^2(D)$ to the solution of the deterministic homogenized problem
\begin{equation}
\label{e.hpde}
\left\{
\begin{aligned}
&-\ol{a}_{ij} \frac{\partial^2 u}{\partial x_i \partial x_j}(x) + \ol{q} u(x) = f(x),\quad &&\quad x\in D,\\
& u(x) = 0, \quad &&\quad x \in \partial D.
\end{aligned}
\right.
\end{equation}
Here, the effective conductivity coefficients $(\ol{a}_{ij})$ are constants defined by
\begin{equation}
\ol{a}_{ij} = \int_{\bT^d} a_{ik}(y) \left( \delta_{kj} + \frac{\partial \chi^k}{\partial x_j}(y)\right) dy,
\end{equation}
where ${\bT^d} = [0,1]^d$ denotes the unit cell and the correctors $\chi^k$, $k = 1, \cdots, d$, are given by the unique solution of the corrector equation
\begin{equation}\label{e.corrector}
-\frac{\partial}{\partial x_i} \left(a_{ij}(y) \left(e_k + \frac{\partial \chi^k}{\partial x_j} (y) \right) \right) = 0, \quad \text{ on } \bT^d,
\end{equation}
with the normalization condition $\int_{\bT^d} \chi^k dy = 0$; $e_k$ above is the $k$th standard unit basis vector of $\R^d$. We note that this formula for $(\ol{a}_{ij})$ is exactly the classic periodic homogenization formula for effective conductivity. The effective potential $\ol{q}$ in \eqref{e.hpde} is given by the constant
\begin{equation}
\ol{q} = \E\, q(0,\omega),
\end{equation}
where $\E$ denotes the mathematical mean with respect to $\bP$.

In this paper we study the law (probability distribution) of the homogenization error $u^\eps - u$, viewed as random elements in certain Hilbert spaces. We split this error as two parts: $\E u^\eps - u$ and $u^\eps - \E u^\eps$. In view of the deterministic oscillations in the diffusion coefficients, we expect that the periodic homogenization error, in the replacement of $(a^{ij}(\frac{\cdot}{\eps}))$ to $(\ol{a}_{ij})$, makes significant contributions to the deterministic error $\E u^\eps - u$. Indeed, we show later that this error is essentially of order $O(\eps)$, the same as periodic homogenization. On the other hand, the effect of the random potential $q(\frac{\cdot}{\eps},\omega)$ becomes visible in the random fluctuation $u^\eps - \E u^\eps$, in which the (large) mean is removed. We are interested in characterizing the size and the law of this random fluctuation, and the answers depend on finer information of the random potential $q$, such as the decay rate of the correlations in $q$ and higher order moments of $q$; see section \ref{sec:results} for notions and definitions.

We find that, when $q(x,\omega)$ has short range correlations, the random fluctuation $u^\eps - \E u^\eps$ scales like $\eps^{\frac{d}{2}\wedge 2}$ in $L^1(\Omega,L^2(D))$ norm, and scales like $\eps^{\frac d 2}$ when integrated against a test function. Moreover, the law of the scaled random fluctuation $\eps^{-\frac d 2}(u^\eps - \E u^\eps)$ in $L^2(D)$ for $d = 2,3$ and in $H^{-1}(D)$ for $d = 4,5$, converges to Gaussian distributions as follows (see Theorem \ref{t.sl.23} for details): 
\begin{equation*}
  \frac{u^\eps - \E u^\eps}{\sqrt{\eps^d}} \xrightarrow{\rm{distribution}} \sigma \int_D G(x,y)u(y) dW(y).
\end{equation*}
Here, $W(y)$ is the standard multiparameter Wiener process and hence the law of the right hand above defines a Gaussian probability measure on $L^2(D)$ or $H^{-1}(D)$. This Gaussian distribution is determined by: $G(x,y)$ which is the Green's function associated to the homogenized problem \eqref{e.hpde}, $u(y)$ which is the homogenized solution, and $\sigma$ which is some statistical parameter of the random potential $q(x,\omega)$.

We also consider the case when $q(x,\omega)$ has long range correlations and is constructed as a function of Gaussian random field. Then the random fluctuation scales like $\eps^{\frac{\alpha}{2}\wedge 2}$ in $L^1(\Omega,L^2(D))$ norm, and scales like $\eps^{\frac \alpha 2}$ when integrated against a test function. Moreover, the law of the scaled random fluctuation $\eps^{-\frac \alpha 2}(u^\eps - \E u^\eps)$ in $L^2(D)$ for $d = 2,3$ and in $H^{-1}(D)$ for $d = 4,5$, converges to a Gaussian distribution that can be written as a stochastic integral as above, but with $dW$ replaced by $\dot{W}^\alpha dy$ where $\dot{W}^\alpha$ is a centered Gaussian random field with correlation function $|x-y|^{-\alpha}$, and $\sigma$ replaced by some other statistical parameter; see Theorem \ref{thm:main3} for details.

The study of limiting distribution of the homogenization error goes back to \cite{FOP-82}, where the Laplace operator with a random potential formed by Poisson bumps was considered. General random potential with short range correlations was considered recently in \cite{B-CLH-08}, and in \cite{BJ-DCDS-10, BJ-CMS-11} for other non-oscillatory differential operators with random potential. Long range correlated random potential was considered in \cite{BGGJ12_AA}. When oscillatory differential operators are considered, limiting distribution of homogenization error was obtained in \cite{BP-99} for short range correlated elliptic coefficients, and in \cite{BGMP-AA-08} for long range correlated case, all in the one dimensional setting. The main results of this paper show that the general framework developed in \cite{B-CLH-08,BJ-CMS-11,BGGJ12_AA}, in order to characterize the random fluctuation caused by the random potential, applies even when there are oscillations in the differential operators, as long as these oscillations are not statistically related to those of the random potential. 

Our approach is as follows: we introduce an auxiliary problem with periodic diffusion coefficients and homogenized potential; let $v^\eps$ be the solution. Then the deterministic homogenization error $\E u^\eps - u$ is essentially characterized by $v^\eps - u$, which amounts to classical periodic homogenization theory. The random fluctuation $u^\eps - \E u^\eps$ is then the same as $(u^\eps - v^\eps) - \E(u^\eps - v^\eps)$, which can be represented as a truncated Neumann series. The first term $X^\eps$ in this series contributes to the limiting distribution. By Prohorov's theorem, we need to show that the probability measures of $\{X^\eps\}$ is tight in the proper Hilbert space, and that their characteristic functions converges. The latter is essentially the convergence in distribution of the integration of $X^\eps$ against test functions, in view of the uniform in $\eps$ estimates of the Green's functions associated to the oscillatory diffusion, this step is the same as the earlier setting with non-oscillatory diffusion. The role of oscillations in the diffusion, however, becomes prominent in the step of proving tightness of the measures of $\{X^\eps\}$. The simple and natural method used in \cite{BGGJ12_AA} fails completely; see section \ref{s.discussion} for details. New ideas are needed: we obtain tightness of the the measures of $\{X^\eps\}$ in $L^2(D)$ by controlling the mean square of $H^s$ norm of $\{X^\eps\}$, for some $0 < s < \frac{1}{2}$; similarly, we get tightness in $H^{-1}(D)$ by controlling the mean square of $H^{-s}$ norm with $\frac{1}{2} < s < 1$. The constraints on the spatial dimension $d$ arises naturally in the proof of such controls.

Our analysis relies on uniform estimates of the Green's function associated to the periodic homogenization problem; we refer to \cite{AL87,AL91_Lp} for the classical results, and to \cite{KLS12_ARMA,KLS14_GN} for recent development in this direction. We refer to \cite{MaO14,GO14} for recent results on uniform estimates of Green's function for equations with highly oscillatory random diffusion coefficients in spatial dimension higher than one. We remark also that in this setting, limiting distribution of the corrector function and that of the full random fluctuation $u^\eps - \E u^\eps$, in negative H\"older space, were obtained in \cite{MN15} and \cite{GM15} respectively, in the discrete setting; see also \cite{MO14}. Such results are apparently more challenging to obtain, and the proofs are based on calculus in the (infinite dimensional) probability space.

The rest of this paper is organized as follows: in section \ref{sec:results} we make precise the main assumptions on the parameters of the homogenization problem, in particular on the properties of the random potential, and state the main results in the short range correlation setting. Homogenization of \eqref{e.rpde} and some useful results on periodic homogenization theory are recalled in section \ref{s.homog}. Section \ref{s.rerror} and \ref{s.cdist} are devoted to the proof of the main results, where we characterize how the random fluctuation scales in energy norm and in the weak topology, and determine the limiting distribution of the scaled fluctuation. We present new methods to prove the tightness of the probability measures of the random fluctuations. In section \ref{s.lrc} we state and prove the corresponding results in the long range correlation setting. We make some comments and further discussions in section \ref{s.discussion} and prove some technical results, such as tightness criterion for probability measures, in the appendix.


\section{Assumptions, Preliminaries and Main Results}
\label{sec:results}

\subsection{Assumptions on the coefficients}

Throughout this paper, we assume that the domain $D$ in \eqref{e.rpde} is an open bounded set of $\R^d$ with $C^{1,1}$ boundary. The coefficients $a_{ij}(\frac{x}{\eps})$ and $q(\frac{x}{\eps},\omega)$ are the scaled version of $a_{ij}(x)$ and $q(x,\omega)$. We make the following main assumptions on $a_{ij}$ and $q$.

\medskip

\noindent{\bf Periodic diffusion coefficients.} For the functions $(a_{ij})$, we assume:

\begin{enumerate}

\item[(A1)] (Periodicity) The function $A := (a_{ij}): \R^d \to \R^{d\times d}$ is periodic. That is, for all $x \in \R^d$, $k \in \Z^d$ and $i, j = 1,2,\cdots,d$, we have
\begin{equation}\label{e.Aper}
a_{ij}(x+k) = a_{ij}(x).
\end{equation}

\item[(A2)] (Uniform ellipticity) For all $y \in \bT^d$, the matrix $A(y) = (a_{ij}(y))$ is uniformly elliptic in the sense that, for all $\xi \in \R^d$, one has
\begin{equation}\label{e.Aelliptic}
\lambda |\xi|^2 \le \xi^T A(y) \xi = \sum_{i,j=1}^d \xi_i a_{ij}(y) \xi_j \le \Lambda |\xi|^2
\end{equation}

\item[(A3)] (Smoothness) For some $\gam, M$ with $\gam \in (0,1]$ and $M > 0$, one has
\begin{equation}\label{e.Areg}
\|A\|_{C^\gam(\bT^d)} \le M.
\end{equation} 

\end{enumerate}
We henceforth refer to the above assumptions together as $(\text{A})$.

\medskip

\noindent{\bf Random potential.} For the random field $q(x,\omega)$ on the probability space $(\Omega,\cF,\bP)$, we assume:

\begin{enumerate}

\item[(P)] (Stationarity and Ergodicity) There exists an ergodic group of $\bP$-preserving transformations $(\tau_x)_{x\in \R^d}$ on $\Omega$, where ergodicity means that $E \in \cF$ and
\begin{equation*}
\tau_x E = E, \quad \text{ for all } x \in \R^d
\end{equation*}
imply that $\bP(E) \in \{0,1\}$. The random potential $q(y,\omega)$ is given by $\tilde{q}(\tau_y \omega)$ where $\tilde{q}: \Omega \to \R$ is a random variable satisfying
\begin{equation}\label{e.qbdd}
0 \le \tilde{q}(\omega) \le M, \quad \text{ for all } \omega \in \Omega.
\end{equation}
\end{enumerate}

\medskip

\noindent{\bf Further assumptions on $q$.} The above assumptions are sufficient for proving homogenization result. However, to estimate the size of the homogenization error and to characterize the limiting distribution of the random fluctuation, more assumptions on the random field $q(\cdot,\omega)$ are necessary.

To simplify notations, we write in the sequel
\begin{equation*}
q(x,\omega) = \ol{q} + \nu(x,\omega)
\end{equation*}
where $\ol{q}$ is the mean of $q$ and $\nu$ is the fluctuation. Note that $\ol{q}$ is a deterministic constant and $\nu$ is a mean zero stationary ergodic random field. The auto-correlation function $R(x)$ of $q$ (and hence $\nu$) is defined as
\begin{equation}\label{e.qdec}
R(x) = \E \left[ \nu(x+y,\omega) \nu(y,\omega) \right], \quad \quad \sigma^2 := \int_{\R^d} R(x) dx.
\end{equation}
By Bochner's theorem, $R(x)$ is a positive definite function and $\sigma^2 \ge 0$. We assume that $\sigma > 0$. When $R$ is integrable on $\R^d$, i.e. $\sigma^2 < \infty$, we say that $q$ has short range correlations; we say $q$ has long range correlations if otherwise.

\bigskip

\noindent {\bf Short range correlated random fields.} In this case, we make an assumption on the rate of decay of the correlation function. We denote by $\mathcal{C}$ the set of compact sets in $\R^d$, and for two sets $K_1, K_2$ in $\mathcal{C}$, the distance $d(K_1,K_2)$ is defined to be
$$
d(K_1,K_2) = \min_{x \in K_1, y \in K_2} \, |x - y|.
$$
Given any compact set $K \subset \mathcal{C}$, we denote by $\cF_K$ the $\sigma$-algebra generated by the random variables $\{q(x) \,:\, x \in K\}$. We define the ``maximal correlation coefficient" $\varrho$ of $q$ as follows: for each $r > 0$, $\varrho(r)$ is the smallest value such that the bound
\begin{equation}\label{e.varrho}
\E \left( \varphi_1(q) \varphi_2(q)\right) \le \varrho(r) \sqrt{\E \left(\varphi^2_1(q)\right) \, \E \left(\varphi^2_2(q) \right)}
\end{equation}
holds for any two compact sets $K_1, K_2 \in \mathcal{C}$ such that $d(K_1,K_2) \ge r$ and for any two random variables of the form $\varphi_i(q)$, $i=1,2$, such that $\varphi_i(q)$ is $\cF_{K_i}$-measurable and $\E \varphi_i(q) = 0$. We assume that

\begin{enumerate}
\item[(S)] The maximal correlation function satisfies $\varrho^{\frac 1 2} \in L^1(\R_+,r^{d-1} dr)$, that is
$$
\int_0^\infty \varrho^{\frac 1 2}(r) r^{d-1} dr < \infty.
$$
\end{enumerate}

Assumptions on the mixing coefficient $\varrho$ of random media have been used in \cite{B-CLH-08, BJ-CMS-11,HPP13}; we refer to these papers for explicit examples of random fields satisfying the assumptions. We note that the auto-correlation function $R(x)$ can be bounded by $\varrho$. For any $x \in \R^d$,
\begin{equation*}
|R(x)| = |\E(q(x)-\E q)(q(0) - \E q)| \le \varrho(|x|) \mathrm{Var} (q).
\end{equation*}
By \eqref{e.qbdd}, $q$ and hence its variance is bounded. In view of (S) and the fact that one can assume $\varrho \in [0,1]$ (hence $\varrho \le \sqrt{\varrho}$), we find that $R$ is integrable. Therefore, (S) implies that $q(x,\omega)$ has short range correlations. In fact, (S) is a much stronger assumption, and not necessary for the main results of this paper to hold. In section \ref{s.discussion} we will provide alternative and less restrictive assumptions that is sufficient. However, using the assumption (S) and Lemma \ref{l.cumu} below, we can simplify significantly certain fourth order moment estimates of the random potential $\nu(x,\omega)$; such estimates appear often in the study of limiting distribution of the homogenization error.
\bigskip

\noindent{\bf Long range correlated potentials constructed from Gaussian fields.} There is no general tools like central limit theorems when the random field has long range correlations. We restrict to a specific family of random fields, constructed from long range correlated Gaussian random fields. Let $q(x,\omega) = \ol{q} + \nu(x,\omega)$ with $\ol{q}$ being a nonnegative constant; we assume

\begin{enumerate}
\item[(L1)]  $\nu(x,\omega) = \Phi(g(x))$; $g(x,\omega)$ is a centered stationary Gaussian random field 
with unit variance. Furthermore, the correlation function of $g(x,\omega)$ has heavy 
tail. That is, for some positive constant $\kappa_g$ and some real number $\alpha \in (0, 
d)$,
\begin{equation}
R_g(x) := \E\{g(y,\omega)g(y+x,\omega)\} \sim \kappa_g|x|^{-\alpha} \ \mbox{as}\ 
|x|\rightarrow\infty.
\label{e.tail}
\end{equation}

\item[(L2)] The function $\Phi: \R \to \R$ 
satisfies $-\ol{q} \le \Phi \le M - \ol{q}$, and has Hermite rank one, i.e.
\begin{equation}
\int_\R \Phi(s) e^{-\frac{s^2}{2}} ds = 0, \quad \quad V_1 := \int_\R s\Phi(s) e^{-\frac{s^2}{2}} ds \ne 0.
\label{e.odd}
\end{equation}

\item[(L3)]The Fourier transform $\hat{\Phi}$ of the function $\Phi$ satisfies
\begin{equation}
\label{e.Phihat}
\int_{\R} |\hat{\Phi}(\xi)| \big( 1 + |\xi|^3 \big) < \infty.
\end{equation}
\end{enumerate}
We henceforth refer assumption (L) to the above conditions all together.

\medskip

The assumption (L2) makes $\nu(x,\omega) = \Phi(g(x,\omega))$ mean zero, and the bounds on $\Phi$ ensures that $0 \le q(x,\omega) \le M$, which is \eqref{e.qbdd}. From the above construction, we check that $\nu(x,\omega)$ is stationary ergodic and has long-range correlation function that 
decays like $\kappa |x|^{-\alpha}$, $\kappa = V_1^2 \kappa_g$; see Lemma \ref{lem:tail} for the details. Assumption (L3) allows one to derive a (non-asymptotic) estimate, see Lemma \ref{lem:fourth} in the appendix, for the fourth-order moments of $\nu(x,\omega,\omega)$.  

\medskip

\noindent{\bf Notations.} Throughout the paper, by {\it universal parameters} we refer to $\lambda, \Lambda, \gamma$ and $M$ in the assumptions (A), the autocorrelation function $R$, $\sigma^2$, and the mixing coefficients $\varrho$ in the short range correlation setting, $\alpha, R_g, \kappa_g, \Phi$ and $\kappa$ in the long range correlation setting, the domain $D$ and its boundary $\partial D$, and the dimension $d$. If a constant $C$ depends only on these parameters, we say either $C$ depends on universal parameters or $C$ is a universal constant. For the random potential $\nu(x,\omega)$ and the functions $\varrho(x)$, $R(x)$, etc., which are related to $\nu$, we use $\nu^\eps, \varrho^\eps, R^\eps$, etc., to denote the scaled version. For instance, $\nu^\eps(x,\omega)$ is short-hand notation for $\nu(\frac{x}{\eps})$. We use the notation $H^s(K)$, $s \ge 0$, for the Sobolev or the fractional Sobolev space $W^{s,2}(K)$ on some domain $K \subset \R^d$; when $K$ is bounded, we use $H^{s}_0(K)$ for the subspace that consists of functions having trace zero at $\partial K$; note that $H^s_0(\R^d) = H^s(\R^d)$. We denote by $H^{-s}(K)$, $s>0$, the dual space $(H^s_0(K))'$.  For any Hilbert space $\cH$, $(\cdot,\cdot)_{\cH}$ denotes the inner product in $\cH$; when $\cH = L^2(D)$, we very often omit the subscript and write $(\cdot,\cdot)$ instead. We use $\langle f, g \rangle$ whenever the formal integral $\int_D fg$ makes sense. We typically use $\bone_A$ for the indication function of a set $A \subset \R^d$, or if $A$ is a statement the indication function of $A$ being true. Finally, for two real numbers $a$ and $b$, $a\wedge b$ is a short-hand notation for $\min\{a,b\}$, and $a \vee b$ means $\max\{a,b\}$. 


\subsection{Probability distribution on functional spaces}

We view the random fluctuation $u^\eps - \E u^\eps$ in the homogenization error as random elements in certain functional spaces, and aim to find the limit of its law in that space. It turns out that the choice of functional spaces depends on the spatial dimension $d$.

When $d = 1$, one can choose the space  $C(D)$ of continuous functions. In fact, convergence in distribution in $C(D)$ was proved in \cite{B-CLH-08}, for random diffusion coefficient $a(x,\omega)$ with random potential $q(x,\omega)$, both having short range correlations. In this paper, we prove that for $d = 2,3$, the space can be chosen as $L^2(D)$ and for $d= 4,5$, the space can be chosen as $H^{-1}(D)$. Note that both choices are Hilbert spaces. We recall some facts concerning weak convergence of probability measures on Hilbert spaces. We refer to the books of Billingsley \cite{B-CPM} and Parthasarathy \cite{Partha} for more details.

\subsubsection*{Probability distributions on a Hilbert space}

Let $\cH$ be a separable Hilbert space, and let $X(\omega)$ be a $\cH$ valued random element on the probability space $(\Omega,\cF,\bP)$. Then $X$ determines a probability measure $P^X$ on $(\cH,\mathcal{B}(\cH))$, where $\mathcal{B}(\cH)$ denotes the Borel $\sigma$-algebra generated by open sets in $\cH$, by
\begin{equation}
  P^{X}(\mathcal{S}) = \bP(X \in \mathcal{S}), \quad
  \quad \text{for any } \mathcal{S} \in \mathcal{B}(\cH).
\end{equation}

We say a family $\{X^\eps\}_{\eps \in (0,1)}$ of random elements in $\cH$ converge in probability distribution (or in law), as $\eps \to 0$, to another random element $X$ on $\cH$, if the probability measures $P^{X^\eps}$ converges weakly to $P^X$, i.e. for any real bounded continuous functional $f : \cH \to \R$, 
\begin{equation*}
  \int_{\cH} f(g) \ dP^{X^\eps}(g) \longrightarrow \int_{\cH} f(g) \ dP^X(g).
\end{equation*}
In particular, any probability measure $P$ on a separable Hilbert space $\cH$ is determined by its characteristic function $\phi^P: \cH \to \mathbb{C}$,
\begin{equation}
  \phi^P(h) = \int_{\cH} e^{i(h,g)_{\cH}} \ dP (g),
\end{equation}
Moreover, the following result holds:

\begin{theorem}[{\cite[Chapter VI, Lemma 2.1]{Partha}}]\label{t.Partha} Let $\{X^\eps\}_{\eps\in(0,1)}$ and $X$ be random elements in $\cH$, possibly defined on different probability spaces. $X^\eps$ converges to $X$ in law in $\cH$, as $\eps \to 0$, if the family of probability measures $\{P^{X^\eps}\}_{\eps \in (0,1)}$ is tight and for any $h \in \cH$,
  \begin{equation}\label{e.t.Partha1}
	\lim_{\eps \to 0} \phi^{P^{X^\eps}}(h) = \phi^{P^X} (h).
  \end{equation}
\end{theorem}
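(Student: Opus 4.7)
The plan is to combine Prohorov's theorem with the fact that the characteristic function uniquely determines a probability measure on a separable Hilbert space. By the assumed tightness of $\{P^{X^\eps}\}_{\eps \in (0,1)}$, Prohorov's theorem yields relative compactness in the topology of weak convergence of probability measures on $\cH$; here I use that $\cH$, being a separable Hilbert space, is Polish. Thus for any sequence $\eps_n \to 0$ we can extract a subsequence $\eps_{n_k}$ along which $P^{X^{\eps_{n_k}}}$ converges weakly to some probability measure $Q$ on $\cH$.

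Next I identify $Q$ through characteristic functions. For each fixed $h \in \cH$, the map $g \mapsto e^{i(h,g)_{\cH}}$ has bounded and continuous real and imaginary parts on $\cH$, so the definition of weak convergence applied along the subsequence gives $\phi^{P^{X^{\eps_{n_k}}}}(h) \to \phi^Q(h)$. Combining this with the hypothesis \eqref{e.t.Partha1}, which forces the same limit along the entire family, I conclude $\phi^Q(h) = \phi^{P^X}(h)$ for every $h \in \cH$.

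The third step is the uniqueness statement: two Borel probability measures on a separable Hilbert space with identical characteristic functions must coincide. I would argue as follows. For any finite collection $h_1,\ldots,h_m \in \cH$, the joint characteristic function of the random vector $((h_1,g)_{\cH},\ldots,(h_m,g)_{\cH})$ under either $Q$ or $P^X$ is $(\xi_1,\ldots,\xi_m)\mapsto \phi(\sum_j \xi_j h_j)$, so by the classical Fourier uniqueness theorem on $\R^m$ the finite-dimensional distributions of $Q$ and $P^X$ agree. Fixing a countable dense subset $\{h_k\}_{k\ge 1}$ of $\cH$, the cylinder sets defined by the $h_k$'s form a $\pi$-system that generates the Borel $\sigma$-algebra on $\cH$ (this is where separability is essential), and hence $Q = P^X$.

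Finally, since every sequence $\eps_n \to 0$ admits a further subsequence along which $P^{X^{\eps_{n_k}}}$ converges weakly to the common limit $P^X$, a standard subsequence argument implies that the whole family $P^{X^\eps}$ converges weakly to $P^X$, which is the desired convergence in law. I expect the main technical obstacle to be the uniqueness step: Prohorov's theorem is a black-box application, but translating pointwise equality of characteristic functions into equality of measures on $(\cH,\mathcal{B}(\cH))$ requires the separability of $\cH$ to reduce the infinite-dimensional problem to countably many finite-dimensional Fourier uniqueness statements.
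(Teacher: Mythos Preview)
Your argument is correct and is the standard proof: tightness plus Prohorov gives subsequential weak limits, pointwise convergence of characteristic functions identifies every such limit as $P^X$ via the uniqueness theorem for characteristic functions on separable Hilbert spaces, and a subsequence argument finishes. The paper does not actually prove this statement; it simply quotes it from \cite[Chapter VI, Lemma 2.1]{Partha}, so there is nothing to compare against beyond noting that your sketch matches the classical argument behind that reference.
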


\begin{remark}\label{r.Partha} Let $\cH = L^2(D)$, which is a separable Hilbert space, and let $X$ be a random element in $L^2(D)$ defined on the probability space $(\Omega,\cF,\bP)$. The characteristic function of $P^X$ can be calculated as follows: for any $h \in L^2(D)$, 
  \begin{equation}
	\phi^{P^X}(h) = \int_\R e^{iz} dP^X(\{(h,g) > z\}) = \int_\R e^{iz} d\,\bP(\{(h,X(\omega)) > z\}) = \E \,e^{i(h,X)}.
	\label{e.t.Partha2}
  \end{equation}
  Therefore, to prove that $X^\eps$ converges in distribution to $X$ as $L^2$ paths, it suffices to show that $\{P^{X^\eps}\}$ is tight and that for any $h \in L^2(D)$,
  \begin{equation}
	(h,X^\eps) \xrightarrow{\mathrm{distribution}} (h,X),
	\label{e.t.Partha3}
  \end{equation}
that is, the random variables $(h,X^\eps)$ converges in distribution to the random variable $(h,X)$.
\end{remark}

In Theorem \ref{t.tight} in the appendix, we provide a tightness criterion for $\{P^{X^\eps}\}$ on $L^2(D)$, with the assumption that $\{X^\eps(\cdot,\omega)\}$ are in $H^s_0(D)$ for certain $s > 0$. The criterion is sufficient but by no means necessary. Nevertheless, it is very handy for our analysis since the random fields $X^\eps$ that we are dealing with come from solutions of \eqref{e.rpde}, and hence are naturally in $H^s_0(D)$.

\subsection{Main results}

We now state the main results of the paper under the assumption that $q(x,\omega)$ has {\it short range correlations}. Analogous results for long range correlation setting will be presented in section \ref{s.lrc}. 

The first main theorem concerns how does the homogenization error scales.

\begin{theorem} \label{t.size}
Let $D \subset \R^d$ be an open bounded $C^{1,1}$ domain, $u^\eps$ and $u$ be the solutions to \eqref{e.rpde} and \eqref{e.hpde} respectively. Suppose that {\upshape(A)(P)} and {\upshape(S)} hold, $f \in L^2(D)$ and $2 \le d \le 7$. Then, there exists positive constant $C$, depending only on the universal parameters, such that
\begin{equation}\label{e.t.size}
\E\,\|u^\eps - u\|_{L^2} \le C \eps \|f\|_{L^2}.
\end{equation}
Moreover,
\begin{equation}\label{e.t.size.f}
\E\,\|u^\eps - \E u^\eps\|_{L^2} \le \begin{cases}
C \eps^{2\wedge \frac{d}{2}} \|f\|_{L^2}, \quad&\text{if }\; d \ne 4,\\
C \eps^2 |\log \eps|^{\frac 1 2} \|f\|_{L^2}, \quad&\text{if }\; d = 4.
\end{cases}
\end{equation}
Moreover, for any $\varphi \in L^2(D)$,
\begin{equation}\label{e.t.size.fw}
\E ~\left|(u^\eps - \E u^\eps, \varphi)_{L^2}\right| \le C \eps^{\frac d 2} \|\varphi\|_{L^2} \|f\|_{L^2}.
\end{equation}
\end{theorem}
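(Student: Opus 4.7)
The plan is to decouple the deterministic oscillation of $(a_{ij}^\eps)$ from the random fluctuation of $\nu^\eps(x,\omega)=q(x/\eps,\omega)-\bar q$ by inserting an auxiliary problem: let $v^\eps$ solve
\begin{equation*}
-\frac{\partial}{\partial x_i}\!\left(a_{ij}(\tfrac{x}{\eps})\,\frac{\partial v^{\eps}}{\partial x_j}\right) + \bar q\, v^{\eps} = f \;\;\text{in}\;D,\qquad v^{\eps}|_{\partial D}=0.
\end{equation*}
Then $v^\eps$ is deterministic and classical periodic homogenization (the Avellaneda--Lin estimates recalled in Section~\ref{s.homog}) gives $\|v^{\eps}-u\|_{L^{2}}\le C\eps\|f\|_{L^{2}}$. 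Denoting by $G^\eps$ the Green's operator of $-\partial_i(a_{ij}^\eps\partial_j\,\cdot)+\bar q$ on $D$ with Dirichlet data, subtracting the two problems yields $u^\eps-v^\eps=-G^\eps(\nu^\eps u^\eps)$, and one Neumann iteration produces the truncated expansion
\begin{equation*}
u^{\eps}-v^{\eps} = -G^{\eps}(\nu^{\eps} v^{\eps}) + G^{\eps}\bigl(\nu^{\eps}\,G^{\eps}(\nu^{\eps}u^{\eps})\bigr).
\end{equation*}
Since $v^\eps$ is deterministic and $\E\nu^\eps=0$, the first term has mean zero; hence $u^\eps-\E u^\eps$ equals $-G^\eps(\nu^\eps v^\eps)$ plus the centered second-order Neumann term, while $\E u^\eps - v^\eps$ consists only of the expectation of the second-order term.

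For the weak bound \eqref{e.t.size.fw}, I transpose $G^\eps$ and compute
\begin{equation*}
\E\bigl|(\nu^{\eps}v^{\eps},G^{\eps}\varphi)\bigr|^{2} = \int_{D}\!\!\int_{D} v^{\eps}(y)v^{\eps}(z)(G^{\eps}\varphi)(y)(G^{\eps}\varphi)(z)\, R\!\left(\tfrac{y-z}{\eps}\right)dy\,dz.
\end{equation*}
The substitution $z=y-\eps w$ and the integrability of $R$ (a consequence of (S)) extract a factor $\eps^d\|R\|_{L^1}$, leaving $\|v^\eps G^\eps\varphi\|_{L^2}^2$, which uniform Avellaneda--Lin bounds on $v^\eps$ and on the $L^2\to L^2$ norm of $G^\eps$ dominate by $C\|f\|_{L^2}^2\|\varphi\|_{L^2}^2$. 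Cauchy--Schwarz in $\omega$ then gives \eqref{e.t.size.fw}, and the analogous estimate for the second-order Neumann term is obtained via Lemma~\ref{l.cumu}, which under (S) reduces the required fourth-order moments of $\nu^\eps$ to sums of products of two-point correlations.

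The strong bound \eqref{e.t.size.f} follows from the analogous variance identity
\begin{equation*}
\E\|G^{\eps}(\nu^{\eps}v^{\eps})\|_{L^{2}}^{2} = \int_{D}\!\!\int_{D}\!\!\int_{D} G^{\eps}(x,y)\,G^{\eps}(x,z)\, v^{\eps}(y)v^{\eps}(z)\, R\!\left(\tfrac{y-z}{\eps}\right) dy\,dz\,dx,
\end{equation*}
and this is the principal technical obstacle: the uniform pointwise bound $G^\eps(x,y)\le C|x-y|^{2-d}$ fails to be square-integrable in $y$ for $d\ge 4$, so no blunt Hilbert--Schmidt estimate is available and the saturation at $\eps^{4}$ in the theorem must be captured dimension by dimension. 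My plan is to split the integration into a diagonal zone $\{|y-x|\le\eps,\,|z-x|\le\eps\}$ and its complement, using the localization of $R^\eps$ to $\{|y-z|\lesssim\eps\}$ to handle the mixed regime. On the diagonal zone each factor $\int_{|y-x|\le\eps}|x-y|^{2-d}\,dy=O(\eps^{2})$, producing an overall $O(\eps^{4})$. Off the diagonal, the change of variables $z=y-\eps w$ reduces matters to $\eps^{d}\int_{|y-x|>\eps}|x-y|^{2(2-d)}\,dy$, which is $O(1)$, $O(|\log\eps|)$, $O(\eps^{4-d})$ for $d\le 3$, $d=4$, $d\ge 5$ respectively. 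Adding both regions yields exactly $\eps^{d\wedge 4}$ (with a $|\log\eps|$ at $d=4$), and taking square roots delivers \eqref{e.t.size.f}; the restriction $d\le 7$ enters when this same diagonal/off-diagonal procedure is applied to the fourth-moment contribution of the second-order Neumann term.

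Finally, \eqref{e.t.size} follows by the triangle inequality from $\|v^\eps-u\|_{L^2}\le C\eps\|f\|_{L^2}$ and from \eqref{e.t.size.f}, provided one also controls $\|\E u^\eps-v^\eps\|_{L^2}=O(\eps^2)$; this last estimate comes from applying the two-point correlation $\E[\nu^\eps(y)\nu^\eps(z)]=R^\eps(y-z)$ to $\E G^\eps(\nu^\eps G^\eps(\nu^\eps u^\eps))$, where $R^\eps$ localizes $|y-z|\lesssim\eps$ and the inner Green's factor $G^\eps(y,z)\lesssim\eps^{2-d}$ combines with the volume factor $\eps^d$ to yield the promised $\eps^2$ after integration against the smooth outer piece $G^\eps(x,y)u(y)$.
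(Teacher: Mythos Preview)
Your global architecture — the auxiliary deterministic solution $v^\eps$, the Neumann iteration, and the passage to the autocorrelation $R$ after taking expectation — matches the paper exactly. Two pieces of your execution, however, contain genuine gaps.

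\textbf{The $L^2$ estimate \eqref{e.t.size.f}.} Your diagonal/off-diagonal scheme rests on the claim that $R^\eps$ ``localizes to $\{|y-z|\lesssim\eps\}$''. Assumption~(S) gives only $R\in L^1\cap L^\infty$, not compact support; after the change $z=y-\eps w$ the factor $G^\eps(x,y-\eps w)$ cannot be replaced by $|x-y|^{2-d}$, since $w$ ranges over all of $\R^d$ with weight $R(w)$ and may satisfy $\eps|w|\gg|x-y|$. The paper instead integrates in $x$ \emph{first}: the pointwise bounds on $G^\eps$ together with \eqref{e.intpot} give
\[
\int_D |G_\eps(x,y)G_\eps(x,z)|\,dx \;\le\; \frac{C\bigl(1+\bone_{d=4}|\log|y-z||\bigr)}{|y-z|^{(d-4)\vee 0}},
\]
so that $\E\|G^\eps(\nu^\eps v^\eps)\|_{L^2}^2$ becomes $\int |v^\eps|\,(K^\eps*|v^\eps|)$ with $K^\eps(y)=R(y/\eps)\,|y|^{-(d-4)\vee0}$. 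A single Young's inequality then reduces everything to $\|K^\eps\|_{L^1}$, which is computed directly to be $O(\eps^{d\wedge4})$ (with the logarithm at $d=4$) using only $R\in L^1\cap L^\infty$. No zone decomposition is needed. The paper also observes that $\|u^\eps-v^\eps\|_{L^2}\le C\|G^\eps(\nu^\eps v^\eps)\|_{L^2}$, via the uniform $L^2\to L^2$ bound on the full solution operator $\cG^{\eps,\omega}$; consequently \eqref{e.t.size.f} follows from this one second-moment calculation, and no control of the second Neumann term is required for this part.

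\textbf{The deterministic piece $\E u^\eps-v^\eps$.} Your final paragraph pulls the expectation through $G^\eps(\nu^\eps G^\eps(\nu^\eps u^\eps))$ by replacing $\E[\nu^\eps(y)\nu^\eps(z)]$ with $R^\eps(y-z)$. This is not valid: $u^\eps$ is random and correlated with $\nu^\eps$, so no such factorization holds. The paper avoids the issue entirely: by Minkowski, $\|\E(u^\eps-v^\eps)\|_{L^2}\le\E\|u^\eps-v^\eps\|_{L^2}\le C\bigl(\E\|G^\eps(\nu^\eps v^\eps)\|_{L^2}^2\bigr)^{1/2}=O(\eps^{(d\wedge4)/2})\le O(\eps)$ for $d\ge2$, and \eqref{e.t.size} is then just the triangle inequality together with $\|v^\eps-u\|_{L^2}\le C\eps\|f\|_{L^2}$.

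Finally, the constraint $d\le7$ does not enter where you locate it. In the paper it appears in \eqref{e.t.size.fw} through the Sobolev embedding $H^2(D)\hookrightarrow L^4(D)$, used to bound $\|v^\eps\,G^\eps\varphi\|_{L^2}\le\|v^\eps\|_{L^4}\|G^\eps\varphi\|_{L^4}$, and through the requirement that the crude remainder bound $\E|I^\eps_3|=O(\eps^{d\wedge4})$ be $o(\eps^{d/2})$, i.e.\ $d<8$.
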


This theorem provides $L^1(\Omega,L^2(D))$ estimates of $u^\eps - u$ and its random part, and its proof is detailed in section \ref{s.rerror}. We note that the size of the full homogenization error is much larger than that of its random part. This is because the oscillations in the diffusion coefficients cause some deterministic fluctuation in the solution of size $O(\eps)$, as in standard periodic homogenization. The additional random fluctuation caused by the short range correlated random potential scales like $\eps^{(d\wedge4)/2}$ in energy norm, and scales like $\eps^{d/2}$ in the weak topology. These results agree with the case of non-oscillatory diffusion coefficients; see \cite{B-CLH-08, BJ-CMS-11}. The next result exhibits the limiting law of the rescaled random fluctuation $\eps^{-\frac{d}{2}}(u^\eps - \E u^\eps)$.

\begin{theorem}
\label{t.sl.23}
Suppose that the assumptions in Theorem \ref{t.size} hold. Let $\sigma$ be defined as in \eqref{e.qdec} and $G(x,y)$ be the Green's function of \eqref{e.hpde}. Let $W(y)$ denote the standard $d$-parameter Wiener process. Then 
\begin{itemize}
\item[\upshape(i)]  For $d = 2, 3$, as $\eps \to 0$, 
\begin{equation}\label{e.sl.23}
\frac{u^\eps - \E u^\eps}{\sqrt{\eps^d}} \xrightarrow{\rm{distribution}} \sigma \int_D G(x,y) u(y) dW(y), \quad \text{ in  } L^2(D).
\end{equation}
\item[\upshape(ii)] For $d = 4,5$, as $\eps \to 0$, the above holds as convergence in law in $H^{-1}(D)$.
\end{itemize}
\end{theorem}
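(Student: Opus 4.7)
The plan is to invoke \tref{Partha} together with \rref{Partha}, suitably extended to $H^{-1}(D)$, in the Hilbert space $\cH$ relevant to the dimension at hand: $\cH = L^2(D)$ when $d \in \{2,3\}$ and $\cH = H^{-1}(D)$ when $d \in \{4,5\}$. Two things must be established: first, the family of laws of $\eps^{-d/2}(u^\eps - \E u^\eps)$ is tight in $\cH$; and second, for every test function $\varphi \in \cH$,
\begin{equation*}
\frac{(u^\eps - \E u^\eps, \varphi)_{L^2}}{\eps^{d/2}} \xrightarrow{\rm{distribution}} \sigma \int_D (\G^* \varphi)(y)\, u(y)\, dW(y),
\end{equation*}
where $\G^*$ denotes the homogenized adjoint Green's operator with kernel $G(\cdot,y)$.

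First I would introduce the auxiliary problem: let $v^\eps$ solve $-\partial_i(a_{ij}^\eps \partial_j v^\eps) + \ol{q}\, v^\eps = f$ in $D$ with zero boundary data, and let $\G^\eps$ be the associated Green's operator. Subtracting equations and iterating once gives the truncated Neumann series
\begin{equation*}
u^\eps - v^\eps = -\G^\eps(\nu^\eps v^\eps) + \G^\eps\bigl(\nu^\eps \G^\eps(\nu^\eps u^\eps)\bigr).
\end{equation*}
Since $v^\eps$ is deterministic, $u^\eps - \E u^\eps = (X^\eps - \E X^\eps) + (Y^\eps - \E Y^\eps)$ with $X^\eps := -\G^\eps(\nu^\eps v^\eps)$ and $Y^\eps$ the second-order remainder. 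Using the uniform $L^p$ and pointwise bounds on $\G^\eps$ from \cite{AL87,AL91_Lp} together with (S) and the fourth-moment estimates already used in the proof of \tref{size}, I expect $\E\|Y^\eps - \E Y^\eps\|_{\cH}^2 = o(\eps^d)$, reducing both tasks above to the study of $X^\eps$.

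For the convergence of the pairing, write
\begin{equation*}
\eps^{-d/2}(X^\eps,\varphi)_{L^2} = -\eps^{-d/2}\int_D ((\G^\eps)^*\varphi)(y)\, v^\eps(y)\, \nu(y/\eps,\omega)\, dy.
\end{equation*}
Avellaneda--Lin uniform estimates give $(\G^\eps)^*\varphi \to \G^*\varphi$ and $v^\eps \to u$ in $L^2(D)$ with $O(\eps)$ rate, and the standard second-moment bound $\E|\int \phi(y)\nu(y/\eps)\,dy|^2 \le C\eps^d \|\phi\|_{L^2}^2$ from (S) shows that the replacement errors remain $o(1)$ after rescaling by $\eps^{-d/2}$. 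What remains is the classical oscillatory integral $\eps^{-d/2}\int_D h(y)\nu(y/\eps)\,dy$ with $h = -(\G^*\varphi)\cdot u$; under (S) this converges in distribution to $\sigma\int_D h(y)\,dW(y)$ by the central limit theorem framework of \cite{B-CLH-08, BJ-CMS-11}, and stochastic Fubini identifies the limit, in law, with the pairing of $\varphi$ against $\sigma\int_D G(x,y) u(y)\,dW(y)$.

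The main obstacle is tightness. The natural Kolmogorov-type argument used in \cite{BGGJ12_AA} fails here (see \sref{discussion}), because the oscillatory diffusion destroys the simple pointwise control of $X^\eps$ available in the non-oscillatory setting. Instead, I would apply the tightness criterion of \tref{tight}: it suffices to show $\E\|\eps^{-d/2}X^\eps\|_{H^s_0(D)}^2 \le C$ uniformly in $\eps$ for some $0 < s < \tfrac{1}{2}$ when $d = 2,3$, and the analogous $H^{-s}(D)$ bound with $\tfrac{1}{2} < s < 1$ when $d = 4,5$; compactness of the relevant Sobolev embeddings into $\cH$ then yields tightness. Expanding the Sobolev norm in the Dirichlet Laplacian eigenbasis $\{e_k,\lambda_k\}_{k\ge 1}$ of $D$ leads to the model quantity
\begin{equation*}
\E\|X^\eps\|_{H^{\pm s}}^2 = \sum_k \lambda_k^{\pm s} \int\!\!\int ((\G^\eps)^*e_k)(y_1)((\G^\eps)^*e_k)(y_2)\, v^\eps(y_1)v^\eps(y_2)\, R^\eps(y_1-y_2)\, dy_1 dy_2.
\end{equation*}
The short-range bound $\int_{\R^d} |R^\eps| \le C\eps^d$ contracts the double integral to a local $L^2$-type quantity in $(\G^\eps)^*e_k$, while the Avellaneda--Lin uniform pointwise and gradient bounds on $\G^\eps$ combined with Weyl's asymptotics $\lambda_k \sim k^{2/d}$ make the series summable in precisely the stated dimension ranges; the cutoff at $d \le 5$ and the switch from $L^2(D)$ to $H^{-1}(D)$ at $d = 4$ emerge naturally from this summation. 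Producing this uniform Sobolev estimate---blending the fine regularity of the periodic homogenization Green's function with the decorrelation of $\nu$---is the essential new ingredient of the argument.
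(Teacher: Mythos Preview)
Your overall framework matches the paper's: introduce $v^\eps$, expand $u^\eps - v^\eps$ in a truncated Neumann series, show the remainder vanishes after rescaling, and handle the leading term $X^\eps = -\eps^{-d/2}\cG_\eps(\nu^\eps v^\eps)$ via tightness plus convergence of pairings. The argument for the pairings is also essentially the paper's (Lemma~\ref{l.ldist}).

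The genuine gap is in your tightness argument. You propose to expand $\E\|X^\eps\|^2_{H^{\pm s}}$ in the Dirichlet eigenbasis $\{e_k,\lambda_k\}$ and then sum using Weyl asymptotics. This is precisely the approach the paper analyses in \sref{discussion} and shows to \emph{fail} in the oscillatory setting. The point is that $e_k$ diagonalizes the homogenized operator, not $\cL_\eps$, so $\cG_\eps e_k \ne \lambda_k^{-1} e_k$. The short-range bound $\|R^\eps\|_{L^1}\le C\eps^d$ indeed contracts the double integral, but only to $C\|\cG_\eps e_k\|_{L^2}^2$, and the best uniform-in-$\eps$ estimate available from the Avellaneda--Lin theory is $\|\cG_\eps e_k\|_{L^2}\lesssim \lambda_k^{-1/2}$ (coming from $\|\nabla\cG_\eps e_k\|_{L^2}\approx\lambda_k^{-1/2}$ and Poincar\'e), not $\lambda_k^{-1}$. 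Hence
\[
\E\|X^\eps\|_{H^s}^2 \lesssim \sum_k \lambda_k^{s-1} \sim \sum_k k^{2(s-1)/d},
\]
which diverges for every $s>0$ already when $d=2$. The ``pointwise and gradient bounds on $\cG^\eps$'' you invoke do not help here: they control the kernel, not the spectral action of $\cG_\eps$ on eigenfunctions of a \emph{different} operator.

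The paper's replacement argument avoids spectral expansion entirely. For $d=2,3$ (Lemma~\ref{l.Hs}) one writes the Gagliardo $H^s$-seminorm of $X^\eps$ directly, reducing the problem to the purely deterministic inequality
\[
\int_{D^2}\frac{|G_\eps(x,z)-G_\eps(y,z)|\,|G_\eps(x,\xi)-G_\eps(y,\xi)|}{|x-y|^{d+2s}}\,dx\,dy \le C
\]
uniformly in $z,\xi,\eps$, proved by a geometric case-splitting (Figure~\ref{f.Hs}) that trades between the pointwise bound \eqref{e.Gsing} and the gradient bound \eqref{e.DGsing}. For $d=4,5$ (Lemmas~\ref{l.Hsequiv} and~\ref{l.H-s}) one instead uses the Fourier characterization of $H^{-s}(\R^d)$ together with the uniform bound $\|G_\eps(\cdot,y)\|_{H^{-s}(\R^d)}\le C$, which follows from elliptic regularity for $\cL_\eps$. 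In both cases the key is to estimate the Green's function itself in the right fractional norm, rather than its action on a fixed basis.
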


The proof of item (i) above can be found towards the end of section \ref{s.cL2}, and that of item (ii) is at the end of section \ref{s.cHs}.

\begin{remark} The integral on the right hand side of \eqref{e.sl.23} is understood, for each fixed $x$, as Wiener integral in $y$ with respect to the multiparameter Wiener process $W(y)$. Let $X$ denote the result. For $d = 2,3$, because the Green's function $G(x,y)$ is square integrable, $X$ is a random element in $L^2(D)$. For $d = 4,5$, $X$ is understood through the Fourier transform of its distribution: given $h^* \in H^{-1}(D)$, $\phi^{P^X}(h^*)$ is defined to be $\mathbf{E} \exp \left( i \sigma \int_D \langle G(\cdot,y), h^*(\cdot) \rangle u(y) dW(y) \right)$, where $\mathbf{E}$ is the expectation with respect to the law of $W$.
\end{remark}

\section{Homogenization and Periodic Error Estimates}
\label{s.homog}

The following homogenization result for \eqref{e.rpde}, without the random potential $q^\eps(x,\omega)$, is well known. The effect of the presence of $q^\eps$ turns out to be minor for homogenization;  nevertheless, we include a proof here for the sake of completeness.

\begin{theorem}
\label{t.homog} Assume {\upshape (A1)(A2)} and {\upshape (P)}. Then there exists $\Omega_1 \in \F$ such that $\bP(\Omega_1) = 1$, and for all $\omega \in \Omega_1$, the solution $u^\eps$ of \eqref{e.rpde} converges to the solution $u$ of \eqref{e.hpde} weakly in $H^1(D)$ and strongly in $L^2(D)$, for any $f \in H^{-1}(D)$.
\end{theorem}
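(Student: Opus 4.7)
The plan is to combine the classical periodic homogenization argument for the principal part with an application of Birkhoff's ergodic theorem for the zero-order term. First, I would establish a uniform $H^1_0(D)$ bound on $u^\eps$: multiplying \eqref{e.rpde} by $u^\eps$ itself, integrating by parts, and using (A2) together with $q\ge 0$ from (P), I get $\lambda \|\nabla u^\eps\|_{L^2}^2 \le \|f\|_{H^{-1}}\|u^\eps\|_{H^1_0}$, so $\|u^\eps\|_{H^1_0(D)} \le C\|f\|_{H^{-1}(D)}$ with $C$ deterministic and independent of $\eps$ and $\omega$. By Rellich--Kondrachov, $u^\eps$ is precompact in $L^2(D)$. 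Extract a subsequence (still denoted $u^\eps$) and a random element $u^* \in H^1_0(D)$ such that, for some $\omega$ to be fixed later, $u^\eps \rightharpoonup u^*$ weakly in $H^1_0(D)$ and strongly in $L^2(D)$.

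Second, I would deal with the oscillating flux via Tartar's method of oscillating test functions, which is purely deterministic. Letting $\chi^{*,k}$ denote the adjoint correctors solving the transposed cell problem analogous to \eqref{e.corrector}, the test functions $w_k^\eps(x) := x_k + \eps\chi^{*,k}(x/\eps)$ satisfy $-\partial_i(a_{ji}(x/\eps)\partial_j w_k^\eps) = 0$. A div-curl type argument then yields $a_{ij}(x/\eps)\partial_j u^\eps \rightharpoonup \bar a_{ij}\partial_j u^*$ in the sense of distributions on $D$, which is the standard periodic homogenization step.

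Third, for the potential term I would invoke the Birkhoff ergodic theorem. By (P), $q(x,\omega) = \tilde q(\tau_x\omega)$ is stationary ergodic and bounded by $M$. The multidimensional Birkhoff theorem gives a full-measure set $\Omega_1\subset \Omega$ such that for every $\omega\in\Omega_1$ and every bounded measurable $\psi$ compactly supported in $\R^d$,
\begin{equation*}
\int_{\R^d} q(x/\eps,\omega)\psi(x)\,dx \longrightarrow \bar q \int_{\R^d}\psi(x)\,dx,
\end{equation*}
i.e.\ $q(\cdot/\eps,\omega)\rightharpoonup^* \bar q$ in $L^\infty(D)$. Crucially, $\Omega_1$ is chosen independently of $\eps$ and of $f$. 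For $\omega\in\Omega_1$ and any $\varphi\in C_c^\infty(D)$, I split
\begin{equation*}
\int_D q(x/\eps,\omega) u^\eps \varphi\,dx - \bar q\int_D u^*\varphi\,dx = \int_D q(x/\eps,\omega)(u^\eps - u^*)\varphi\,dx + \int_D (q(x/\eps,\omega) - \bar q) u^*\varphi\,dx;
\end{equation*}
the first term is controlled by $M\|\varphi\|_{L^2}\|u^\eps-u^*\|_{L^2}\to 0$ by strong $L^2$ convergence, and the second tends to zero by the weak-$*$ convergence above applied to $u^*\varphi\in L^1(D)$.

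Combining the two convergences, $u^*$ satisfies the weak form of \eqref{e.hpde}. Since \eqref{e.hpde} admits a unique $H^1_0(D)$ solution $u$ (thanks to $\bar q\ge 0$ and coercivity of $\bar A$), we conclude $u^* = u$ and, by the standard subsequence argument, the full sequence converges. The main technical point to handle carefully is the separation of the random and periodic oscillations: the periodic homogenization of the flux is entirely deterministic and valid for all $\omega$, while the treatment of the potential term exploits strong $L^2$-compactness of $u^\eps$ to turn the weak-$*$ convergence from the ergodic theorem into genuine convergence of the product $q^\eps u^\eps$. This separation is the conceptual heart of the argument, and it also foreshadows the strategy in later sections, where the auxiliary problem $v^\eps$ isolates the purely periodic error from the purely random fluctuation.
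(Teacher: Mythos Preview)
Your proposal is correct and follows essentially the same approach as the paper: uniform $H^1_0$ bounds plus Rellich for compactness, oscillating test functions and div-curl for the flux, Birkhoff's ergodic theorem for the weak convergence of $q(\cdot/\eps,\omega)$ to $\bar q$, and then the same splitting of $\int q^\eps u^\eps\varphi$ into a term controlled by the strong $L^2$ convergence of $u^\eps$ and a term controlled by the ergodic limit, followed by uniqueness to upgrade from subsequences. The only notable difference is that you invoke the adjoint correctors $\chi^{*,k}$, whereas the paper writes the argument with the correctors $\chi^k$ themselves and relegates the non-symmetric case to a remark; your formulation is the cleaner one if $A$ is not assumed symmetric.
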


Let $\cL_\eps$ denote the differential operator
\begin{equation}\label{e.Leps}
-\frac{\partial}{\partial x_i} \left( a_{ij} \left(\frac{x}{\eps}\right) \frac{\partial}{\partial x_j} \right) + \ol{q},
\end{equation}
and let $\cL^{\eps,\omega}$ be the differential operator $\cL_\eps + \nu(\frac{x}{\eps},\omega)$. We remark that $\cL_\eps$ has highly oscillatory but deterministic coefficients while $\cL^{\eps,\omega}$ has, in addition, a highly oscillatory and random potential. Let $\cG^{\eps,\omega}$ and $\cG_\eps$ be the solution operator of the Dirichlet boundary problems associated to $\cL^{\eps,\omega}$ and $\cL_\eps$. 
Owing to the conditions \eqref{e.Aelliptic} and \eqref{e.qbdd}, $\cG^{\eps,\omega}$ is well defined for any $\omega \in \Omega$. Moreover, we have the standard estimate, for any $\omega \in \Omega$ and $\eps > 0$,
\begin{equation}
\| \cG^{\eps,\omega} f\|_{H^1(D)} \le C \|f\|_{H^{-1}(D)}
\end{equation}
with some constant $C$ that depends on on the universal parameters, and neither on $\omega$ nor $\eps$. By the same token, $\cG_\eps$ is well defined and shares the same estimate above.

\begin{proof}[Proof of Theorem \ref{t.homog}] {\it Step 1}. For each $\omega \in \Omega$, the solution $u^\eps$ of \eqref{e.rpde} is given by $\cL^{\eps,\omega} f$, which satisfies the standard estimates
\begin{equation*}
\|u^\eps\|_{H^1(D)} + \|A^\eps \nabla u^\eps\|_{L^2(D)} + \|q^\eps(x,\omega) u^\eps\|_{L^2(D)} \le C,
\end{equation*}
where $C$ depends the universal parameters and $f$, and is uniform in $\eps$ and $\omega$. As a result, due to the compact embeddings $H^1(D) \hookrightarrow L^2(D) \hookrightarrow H^{-1}(D)$,  through a subsequence $\eps_j(\omega) \to 0$ which by an abuse of notation is still denoted by $\eps$, we have
\begin{equation}\label{e.t.homog0}
\begin{aligned}
&\nabla u^\eps(\cdot,\omega) \xrightharpoonup[\eps \to 0]{L^2} \nabla v(\cdot,\omega), &\quad \quad  &A\left(\frac{\cdot}{\eps}\right)\nabla u^\eps(\cdot,\omega) \xrightharpoonup[\eps \to 0]{L^2} \nabla \xi(\cdot,\omega),\\
&u^\eps(\cdot,\omega) \xrightarrow[\eps \to 0]{L^2} v(\cdot,\omega), &\quad\quad
&q\left(\frac{\cdot}{\eps},\omega\right) u^\eps(\cdot,\omega) \xrightarrow[\eps \to 0]{H^{-1}} p(\cdot,\omega).
\end{aligned}
\end{equation}
for some function $v(\cdot,\omega) \in H^1(D)$ and some vector valued function $\xi(\cdot,\omega) \in [L^2(D)]^d$.

{\it Step 2}. Recall that $\{\chi^k\}_{k=1}^d$ are the correctors defined in \eqref{e.corrector}, and we can extend them periodically to functions defined on $\R^d$. Since $A(y)(e_k + \nabla \chi^k(y))$ is periodic, we have that
\begin{equation}\label{e.t.homog1}
A\left(\frac{x}{\eps}\right) \left(e_k + \left(\nabla \chi^k\right) \left(\frac{x}{\eps}\right) \right) \xrightharpoonup{\ L^2\ } \int_{\bT^d} A(y)(e_k(y) + \nabla \chi^k(y)) dy = \ol{A} e_k.
\end{equation}
For the same reason and the fact $\int_{\bT^d} \nabla \chi^k dy = 0$, we have
\begin{equation}\label{e.t.homog2}
e_k + \left(\nabla \chi^k\right) \left(\frac{x}{\eps}\right) \xrightharpoonup{\ L^2\ } \int_{\bT^d} e_k + \nabla \chi^k(y) dy = e_k.
\end{equation}
Now fix an arbitrary function $\varphi \in C^\infty_0(D)$. For each fixed $\omega \in \Omega$, let $\eps(\omega) \to 0$ be the subsequence in Step 1. Consider the integral
\begin{equation*}
\int_D A\left(\frac{x}{\eps}\right) \nabla u^\eps(x,\omega) \cdot \nabla \left\{x_k + \eps \chi^k\left(\frac{x}{\eps}\right)\right\} \varphi(x) dx.
\end{equation*}
On one hand, in view of the third item in \eqref{e.t.homog0}, \eqref{e.t.homog2}, and the facts that $\div(A^\eps \nabla u^\eps) = -f + q^\eps u^\eps$ converges in $H^{-1}$ (to $-f + p(\cdot,\omega)$ where $p$ is defined in \eqref{e.t.homog0}) and that $e_k + (\nabla \chi^k)(x/\eps)$ is curl-free, by the div-curl lemma \cite[Lemma 1.1]{Jikov_book}, the above integral satisfies
\begin{equation*}
\lim_{\eps \to 0} \ \int_D A\left(\frac{x}{\eps}\right) \nabla u^\eps(x,\omega) \cdot \nabla \left\{x_k + \eps \chi^k\left(\frac{x}{\eps}\right)\right\} \varphi(x) dx = \int_D \xi(x,\omega) \cdot e_k \varphi(x) dx.
\end{equation*}
On the other hand, in view of the first item in \eqref{e.t.homog0}, \eqref{e.t.homog1}, and the facts that $\div(A^\eps (e_k + \nabla \chi^k(x/\eps)))$ converges in $H^{-1}$ (they all equal to zero) and that $\nabla u^\eps$ is curl-free, by the div-curl lemma, we have
\begin{equation*}
\lim_{\eps \to 0} \ \int_D A\left(\frac{x}{\eps}\right) \nabla u^\eps(x,\omega) \cdot \nabla \left\{x_k + \eps \chi^k\left(\frac{x}{\eps}\right)\right\} \varphi(x) dx = \int_D \nabla v \ol{A} \cdot e_k \varphi(x) dx.
\end{equation*}
The two limits above must be equal, and it follows that $\xi(\cdot,\omega) = \ol{A}\nabla v(\cdot,\omega)$ in distribution.

{\it Step 3}. Recall that the stationary random potential $q(x,\omega)$ can be written as $\tilde{q}(\tau_x \omega)$ where $\tilde{q}$ is an essentially bounded random variable on $\Omega$. By Birkhoff ergodic theorem \cite[Theorem 7.2]{Jikov_book}, there exists $\Omega_1 \in \F$ with $\bP(\Omega_1) = 1$, and for each $\omega \in \Omega_1$, the whole sequence
\begin{equation}\label{e.t.homog3}
q\left(\frac{x}{\eps},\omega\right) = \tilde{q} \left( \tau_{\frac x \eps} \omega \right) \xrightharpoonup{\ L^\alpha_{\mathrm{loc}}(\R^d) \ } \ol{q} = \E q(0,\omega),
\end{equation}
for any $\alpha \in (1,\infty)$. From the weak formulation of $u^\eps$, for any $\omega \in \Omega_1$ and for any $\varphi \in C^\infty_0(D)$, we have
\begin{equation*}
\int_D A\left(\frac{x}{\eps}\right) \nabla u^\eps(x,\omega) \nabla \varphi(x) dx + \int_D q\left(\frac{x}{\eps},\omega\right) u^\eps(x) \varphi(x) dx = \int_D f(x) \varphi(x) dx.
\end{equation*}
Pass to the limit along the subsequence $\eps(\omega)$ found in Step 1, we have
\begin{equation*}
\int_D \ol{A}\nabla v \cdot\nabla \varphi + \int_D \ol{q} v(x) \varphi(x) dx = \int_D f(x) \varphi(x) dx - \lim_{\eps \to 0} \int_D q\left(\frac{x}{\eps},\omega\right) (u^\eps - v)\varphi(x) dx.
\end{equation*}
The first term on the left follows from the div-curl lemma and the fact that $\xi = \ol{A}\nabla v$; the second term on the left is due to \eqref{e.t.homog3}. Finally, the last term on the right hand side is zero since $q$ is uniformly bounded and $u^\eps - v$ converges to zero strongly in $L^2(D)$. Consequently, the above limit shows that $v$ solves the homogenized equation \eqref{e.hpde}. By uniqueness of the homogenized problem, we must have that $v = u$ and $v$ is deterministic.

Finally, for each $\omega \in \Omega_1$, by the weak compactness in $H^1(D)$ and the uniqueness of the possible limit, the whole sequence $u^\eps$ converges to $u$. This proves the homogenization theorem.
\end{proof}

\begin{remark} We remark that the same proof works in the case when $(a_{ij})$ is not symmetric; indeed, it suffices to replace $\chi^k$ above by the solution of the adjoint corrector equation. The same idea of proof can also be carried out in the case when $(a_{ij})$ are stationary ergodic random fields; indeed, the corrector equation in that case is much more involved but, by now, its solution and analogs of \eqref{e.t.homog1} and \eqref{e.t.homog2}, are well known.
\end{remark}

\subsection{Decomposition of the homogenization error}

To separate the fluctuations in the homogenization error $u^\eps - u$ that are due to the periodic oscillations in the diffusion coefficients from those due to the random potential, we introduce the function $v^\eps$ which solves the following deterministic problem:
\begin{equation}
\label{e.opde}
\left\{
\begin{aligned}
&-\frac{\partial}{\partial x_i} \left(a_{ij} \left(\frac{x}{\eps}\right) \frac{\partial}{\partial x_j} v^\eps(x,\omega)\right) + \ol{q} v^\eps(x,\omega) = f(x),\quad &&\quad x\in D,\\
& v^\eps(x) = 0, \quad &&\quad x \in \partial D.
\end{aligned}
\right.
\end{equation}
Here, the potential field is already homogenized, and we expect that $v^\eps - u$ filters out the effect of the random potential. The problem above is well posed and its solution $v^\eps$ is given by $\cG_\eps f$.

The standard periodic homogenization theory yields that $v^\eps$ converges weakly in $H^1(D)$ and strongly in $L^2(D)$ to $u$, for any $f \in H^{-1}(D)$. Using this function, we can write the homogenization error for \eqref{e.rpde} as
\begin{equation}\label{e.uepsd}
u^\eps - u = (u^\eps - v^\eps) + (v^\eps - u).
\end{equation}
The deterministic part of the homogenization error is
\begin{equation}\label{e.uepsd.d}
\E u^\eps - u = \E (u^\eps - v^\eps) + (v^\eps - u),
\end{equation}
and the random fluctuation part of the homogenization error is
\begin{equation}\label{e.uepsd.f}
u^\eps - \E u^\eps = (u^\eps - v^\eps) - \E (u^\eps - v^\eps).
\end{equation}

The deterministic part of the homogenization error hence contains two parts, the mean of $u^\eps - v^\eps$ and the periodic homogenization error $v^\eps - u$. Estimates for the second part amounts to convergence rate of periodic homogenization, and we recall some of the well known results below, together with uniform in $\eps$ estimates of the Green's function associated to $\cG_\eps$.  We postpone the estimates for $\E(u^\eps - v^\eps)$ to the next section.
%
%

\begin{theorem}[Estimates in periodic homogenization] \label{t.homogp} 
Let $D \subset \R^d$ be an open bounded $C^{1,1}$ domain, $v^\eps$ and $u$ be the solutions to \eqref{e.opde} and \eqref{e.hpde} respectively. Let $G_\eps(x,y)$, $x,y \in D$, be the Green's function associated to the Dirichlet problem of \eqref{e.opde}. Assume {\upshape(A)}. Then there exists positive constant $C$, depending only on the universal parameters, such that
\begin{itemize}
\item[\upshape(i)] For any $f \in L^2(D)$, $\|v^\eps - u\|_{L^2} \le C \eps \|f\|_{L^2}$.

\item[\upshape(ii)] For $d\ge 2$ and for any $x,y \in D$, $x \ne y$, $G_\eps(x,y)$ satisfies
\begin{equation}
|G_\eps(x,y)| \le \begin{cases} C|x-y|^{2-d}, \quad &\text{if } d\ne 2\\
C(1+|\log |x-y| |), \quad &\text{if } d = 2
\end{cases}
\label{e.Gsing}
\end{equation}
and
\begin{equation}
|\nabla G_\eps(x,y)| \le C|x-y|^{1-d}.
\label{e.DGsing}
\end{equation}
\end{itemize}
\end{theorem}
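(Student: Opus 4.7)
Both statements are essentially well-known results from the Avellaneda--Lin school of quantitative periodic homogenization, so my plan is to reduce them to the standard inputs and explain the (minor) changes needed for the lower-order term $\ol{q}$.

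For part (i), I would run the usual two-scale expansion plus duality argument. First define the first-order approximation $\tilde v^\eps(x) = u(x) + \eps \chi^k(x/\eps) \partial_k u(x)$, or rather its cut-off version $u(x) + \eps \chi^k(x/\eps) (\eta_\eps \partial_k u)(x)$ with $\eta_\eps$ a smooth cutoff vanishing in an $\eps$-neighbourhood of $\partial D$, so as to kill the boundary trace. A direct computation shows that $\cL_\eps(\tilde v^\eps - v^\eps)$ is a divergence of terms that are small in $H^{-1}(D)$: the periodic flux corrector (the skew-symmetric $b_{ij}$ such that $a_{ij}(\delta_{jk}+\partial_j\chi^k) - \bar a_{ik} = \partial_j b_{ijk}$) together with the $C^{1,1}$ bound on $\partial D$ and the $H^2$ regularity $\|u\|_{H^2(D)}\le C\|f\|_{L^2}$ yields $\|\tilde v^\eps - v^\eps\|_{H^1(D)} \le C\sqrt{\eps}\|f\|_{L^2}$. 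The addition of the zeroth-order term $\ol q (\tilde v^\eps - u)$ is harmless: it sits on the right-hand side and is controlled by $\eps\|\nabla u\|_{L^2}\|\chi\|_{L^\infty}$. Then the $L^2$ rate $O(\eps)$ follows by the Aubin--Nitsche duality trick: for $g\in L^2(D)$, let $w$ solve the homogenized adjoint problem with data $g$, apply the same two-scale construction to $w$, and pair it against $v^\eps - u$; the cross terms combine to give $|(v^\eps - u, g)| \le C\eps \|f\|_{L^2}\|g\|_{L^2}$.

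For part (ii), the bounds are the classical Avellaneda--Lin Green's function estimates for divergence-form operators with $C^{0,\gamma}$ periodic coefficients on $C^{1,1}$ domains, which are obtained in \cite{AL87,AL91_Lp} for the pure second-order case and extend without modification to $\cL_\eps$ because the zeroth-order term $\ol q$ is a bounded multiplier that only enters as a harmless lower-order perturbation. Concretely, the bounds \eqref{e.Gsing} and \eqref{e.DGsing} follow from the uniform (in $\eps$) interior and boundary Lipschitz and Hölder estimates: combining the De Giorgi--Nash interior bound, the compactness-based three-step argument of Avellaneda--Lin for large-scale Lipschitz regularity, and the $C^{1,1}$-boundary counterpart, one obtains pointwise bounds on $G_\eps$ and $\nabla_x G_\eps$ that match those of the constant-coefficient Green's function for $-\bar a_{ij}\partial_{ij} + \bar q$ on $D$. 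The classical constant-coefficient Green's function satisfies exactly the right-hand sides of \eqref{e.Gsing} and \eqref{e.DGsing}, so the uniform estimates transfer.

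The only non-routine point is checking that the lower-order term $\ol q$ really causes no issues: in both the two-scale expansion of part (i) and the compactness/iteration arguments underlying the Green's function bounds of part (ii), $\ol q$ enters as a smooth coefficient with no oscillation, and so one can either absorb it into the right-hand side and iterate or simply redo the compactness lemma with the perturbed homogenized operator $-\bar a_{ij}\partial_{ij}+\bar q$, which is uniformly elliptic and has $C^\infty$ coefficients. I would therefore expect the bulk of the proof to be a pointer to \cite{AL87,AL91_Lp,KLS12_ARMA,KLS14_GN} with a brief remark that the zeroth-order term is harmless, rather than a full redevelopment of the theory.
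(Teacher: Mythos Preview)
Your proposal is correct and, if anything, more detailed than what the paper does: the paper treats Theorem~\ref{t.homogp} as a known result and simply cites the literature (\cite{MV97}, \cite{G06}, \cite{KLS12_ARMA} for part~(i), and \cite{AL87,AL91_Lp} for part~(ii)) without spelling out the two-scale expansion/duality argument or the compactness mechanism behind the Green's function bounds. Your sketch of the Aubin--Nitsche duality trick and the remark that the constant lower-order term $\ol q$ is a harmless perturbation are accurate and in the spirit of those references, so your write-up would serve as an expanded version of the paper's one-paragraph citation.
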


The $O(\eps)$ error estimates in $L^2$ was proved in \cite{MV97} for $d=2$, and in \cite{G06} for general $C^{1,1}$ domains; see also \cite{KLS12_ARMA}. The uniform in $\eps$ estimates on the Green's function and its gradient can be find in \cite{AL87,AL91_Lp}. The homogeneities in these bounds are the same as those for the Green's function associated to constant coefficient equations, namely the Laplace equations. The striking fact that these bounds still hold for oscillatory equations is due to the fact that the problem \eqref{e.opde} homogenizes. Periodicity or other structural assumptions on the coefficients are necessary.

\section{Estimates for the Homogenization Error}
\label{s.rerror}

In this section, we estimate the size of the homogenization error $u^\eps - u$. In view of the decomposition \eqref{e.uepsd}, \eqref{e.uepsd.d}, \eqref{e.uepsd.f} and the error estimates in Theorem \ref{t.homogp}, it suffices to focus on the intermediate homogenization error $u^\eps - v^\eps$, with $v^\eps = \cG_\eps f$ defined in \eqref{e.opde}.


We introduce the function $w^\eps$ which solves
\begin{equation}\label{e.weps}
\cL_\eps  w^\eps = - \nu_\eps v^\eps,
\end{equation}
with homogeneous Dirichlet boundary condition. With the notations $\cG_\eps$ and $\cG^{\eps,\omega}$ introduced earlier, $w^\eps$ is given by $-\cG_\eps \nu_\eps v^\eps$. It follows that
\begin{equation*}
\cL^{\eps,\omega} (u^\eps - v^\eps - w^\eps) = -\nu_\eps w^\eps,
\end{equation*}
and $u^\eps - v^\eps - u$ vanishes at the boundary. Hence we have $u^\eps - v^\eps - w^\eps = -\cG^{\eps,\omega} \nu_\eps w^\eps$. Due to the assumption (A), $\cG^{\eps,\omega}$ is uniformly (in $\eps$ and $\omega$) bounded as a linear operator $L^2 \to L^2$; we have
\begin{equation}\label{e.uvw}
\|u^\eps - v^\eps\|_{L^2} \le C\|w^\eps\|_{L^2}.
\end{equation}
An estimate of $u^\eps - v^\eps$ thus follows from the following result.

\begin{lemma}\label{l.weps} Let $v^\eps = \cG_\eps f$ and $w^\eps$ as above. Under the same conditions of Theorem \ref{t.size}, there exists a universal constant $C$ and
\begin{equation}\label{e.l.weps}
\E \| w^\eps \|^2_{L^2(D)} \le \begin{cases} C \eps^{d\wedge 4} \|f\|_{L^2}^2, \quad&{d \ne 4},\\
C\eps^4|\log \eps| \|f\|_{L^2}^2, \quad&{d=4}.
\end{cases}
\end{equation}
\end{lemma}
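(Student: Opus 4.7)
Using the Green's function $G_\eps(x,y)$ for $\cL_\eps$ furnished by \tref{homogp}, I would represent $w^\eps$ as
\begin{equation*}
w^\eps(x,\omega) = -\int_D G_\eps(x,y)\, \nu(y/\eps,\omega)\, v^\eps(y)\, dy,
\end{equation*}
and then square and take expectation, using the stationarity identity $\E[\nu(y_1/\eps)\nu(y_2/\eps)] = R((y_1-y_2)/\eps)$, to obtain
\begin{equation*}
\E\|w^\eps\|^2_{L^2(D)} = \int_D\!\int_D K_\eps(y_1,y_2)\, R_\eps(y_1-y_2)\, v^\eps(y_1)\, v^\eps(y_2)\, dy_1 dy_2, \qquad K_\eps(y_1,y_2) := \int_D G_\eps(x,y_1)G_\eps(x,y_2)\,dx,
\end{equation*}
where $R_\eps(y) := R(y/\eps)$.

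The core of the argument is to estimate $K_\eps$ by combining the pointwise bounds \eref{Gsing} on $G_\eps$ with the Riesz composition formula $\int_D |x-y_1|^{2-d}|x-y_2|^{2-d}dx$. This yields that $K_\eps$ is uniformly bounded when $d=2,3$, bounded by $C(1+|\log|y_1-y_2||)$ when $d=4$, and bounded by $C|y_1-y_2|^{4-d}$ when $d=5,6,7$. I then insert these bounds and change variables $z = (y_1-y_2)/\eps$, using Young's convolution inequality and $\|v^\eps\|_{L^2} \le C\|f\|_{L^2}$ (which follows from the $L^2$ boundedness of $\cG_\eps$). For $d=2,3$ this contributes a factor $\|R_\eps\|_{L^1} = \eps^d\sigma^2$; for $d=4$ one has $\int |y|^{0}\log(1/|y|) R_\eps(y)\,dy \le C\eps^4|\log\eps|$ after splitting the $\log$; and for $d=5,6,7$ one computes $\int |y|^{4-d} R_\eps(y)\,dy = \eps^4 \int |z|^{4-d} R(z)\,dz$, which is finite since $|R(z)| \le C \varrho(|z|)$ with $\varrho\in[0,1]$ and assumption (S) implies $\int_0^\infty r^3 \varrho(r)\,dr \le \int_0^\infty r^3\sqrt{\varrho(r)}\,dr < \infty$ (bounding $r^3$ by $r^{d-1}$ for $r\ge 1$).

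The main obstacle is the borderline case $d=4$ and the dimensions $d=5,6,7$: one must verify that the kernel produced by the Riesz convolution is integrable against $R_\eps$ with the right $\eps$-scaling, and this is where the dimensional restriction $d \le 7$ becomes essential (in $d=8$ one would need $\int|z|^{-4}R(z)\,dz$, which generally diverges at the origin). A minor point is that the Riesz bound $\int|x-y_1|^{2-d}|x-y_2|^{2-d}dx \sim |y_1-y_2|^{4-d}$ only holds in the appropriate regime, so the estimate on $K_\eps$ must be cut off against the boundedness of $D$ to absorb any long-range contributions, but these are harmless after integrating in $y_1,y_2$ against the bounded function $v^\eps$.
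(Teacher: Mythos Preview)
Your proposal is correct and follows essentially the same route as the paper: Green's function representation, square and take expectation to produce the kernel $K_\eps(y_1,y_2)=\int_D G_\eps(x,y_1)G_\eps(x,y_2)\,dx$, bound it via \eref{Gsing} and the Riesz composition estimate \eref{intpot}, then apply Young's inequality with $\|v^\eps\|_{L^2}\le C\|f\|_{L^2}$. The only minor deviation is your justification that $\int_{\R^d}|z|^{4-d}|R(z)|\,dz<\infty$ for $5\le d\le 7$ via the mixing bound $|R|\le C\varrho$ and assumption (S); the paper instead uses directly that $R\in L^\infty\cap L^1(\R^d)$ (boundedness handles the origin since $\int_0^1 r^{4-d}r^{d-1}\,dr<\infty$, and $|z|^{4-d}\le 1$ at infinity handles the tail), which is a bit cleaner but equivalent in effect.
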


\begin{proof} Using the Green's function $G_\eps$, we write
\begin{equation}
w^\eps(x,\omega) = \int_D G_\eps(x,y) \nu\left(\frac{y}{\eps}\right) v^\eps(y) dy.
\end{equation}
The $L^2$ energy of $w^\eps$ is then
\begin{equation*}
\|w^\eps(\cdot,\omega)\|^2_{L^2} = \int_{D^3} G_\eps(x,y) G_\eps(x,z) \nu \left(\frac{y}{\eps}\right) \nu\left(\frac{z}{\eps}\right) v^\eps(y)v^\eps(z) dy dz dx.
\end{equation*}
Take the expectation and use the definition of the auto-correlation function $R$ of $q$, we have
\begin{equation}\label{e.wepsexp}
\E \|w^\eps(\cdot,\omega)\|^2_{L^2} = \int_{D^3} G_\eps(x,y) G_\eps(x,z) R\left(\frac{y - z}{\eps}\right) v^\eps(y)v^\eps(z) dy dz dx.
\end{equation}

Integrate over $x$ first. Apply the uniform estimates \eqref{e.Gsing} and the fact (see e.g. Lemma A.1 of \cite{BJ-CMS-11}): for any $y \ne z$, $0<\alpha, \beta < d$, 
\begin{equation}\label{e.intpot}
\int_D \frac{dx}{|x-y|^{d-\alpha} |x-z|^{d-\beta}} \le
\begin{cases}
C, \quad &\text{if } \alpha + \beta > d,\\
C(1+|\log|y-z||), \quad &\text{if } \alpha + \beta = d,\\
C|y-z|^{\alpha+\beta - d}, \quad &\text{if } \alpha + \beta < d.
\end{cases}
\end{equation}
We get
\begin{equation}\label{e.bddsG}
\int_D |G_\eps(x,y) G_\eps(x,z)| dx \le \begin{cases}
C|y-z|^{-((d-4)\wedge 0)}, \quad&\text{if }\; d \ne 4,\\
C(1+\log|y-z|), \quad&\text{if }\; d = 4.
\end{cases}
\end{equation}
Hence, if $d\ge 2$ and $d \ne 4$,
\begin{equation*}
\E \|w^\eps(\cdot,\omega)\|^2_{L^2} \le C\int_{D^2} \frac{|v^\eps(y)v^\eps(z)|}{|y-z|^{(d-4) \vee 0}} \left|R\left(\frac{y - z}{\eps}\right)\right|  dy dz dx.
\end{equation*}
When $d = 4$, the term $(|y-z|^{(d-4)\vee 0})^{-1}$ should be replaced by $1+ |\log|y-z||$. In any case, the above yields a bound of the form
\begin{equation}\label{e.lem41.1}
\E \|w^\eps(\cdot,\omega)\|^2_{L^2} \le C \int_{\R^d} |\tilde{v}^\eps(y)| \left(K^\eps*\tilde{v}^\eps\right)(y) dy.
\end{equation}
Here, $\tilde{v}^\eps = v^\eps \bone_{D}$ and $\bone_{D}$ denotes the indicator function of the set $D$, $K^\eps(y) = R(\frac{y}{\eps})|y|^{(4-d)\wedge0} \bone_{B_\rho}(y)$ if $d \ne 4$ and $K^\eps(y) = R(\frac{y}{\eps})(1+\bone_{B_\rho}(y) |\log|y||)$ if $d = 4$. Here, $B_\rho$ is the ball centered at zero with radius $\rho$ and $\rho$ is the diameter of $D$. We check that, when $d \ne 4$,
\begin{equation}\label{e.lem41.2}
\|K^\eps(y)\|_{L^1} \le \int_{\R^d} \left|R\left(\frac{y}{\eps}\right)\right|\frac{1}{|y|^{(d-4)\vee 0}} dy = \frac{\eps^d}{\eps^{(d-4) \vee 0}} \int_{\R^d} \frac{\left|R(y)\right|}{|y|^{(d-4) \vee 0}} = C\eps^{d\wedge 4},
\end{equation}
where in the last inequality we used $R \in L^\infty \cap L^1(\R^d)$. Similarly, when $d = 4$,
\begin{equation}\label{e.lem41.3}
\|K^\eps(y)\|_{L^1} = \int_{B_\rho} \left| R\left(\frac{y}{\eps}\right) \right| (1+|\log|y||) dy = \eps^4 \int_{B_{\rho/\eps}} |R(y)|(1+|\log|\eps y||) \le C\eps^4|\log \eps|.
\end{equation}
To get the last inequality, we evaluate the integral on $B_1$ and $B_{\rho/\eps}\setminus B_1$, and bound $|\log|\eps y||$ by $|\log|\eps \rho||$ for the second part. Apply H\"older's and then Young's inequalities to \eqref{e.lem41.1}, we get
\begin{equation*}
\E\|w^\eps\|^2_{L^2} \le C\|K^\eps\|_{L^1} \|v^\eps\|_{L^2}^2 \le C\|K^\eps\|_{L^1} \|f\|^2_{L^2}.
\end{equation*}
Combining this with the estimates in \eqref{e.lem41.2} and \eqref{e.lem41.3}, we complete the proof of the lemma.
\end{proof}

\subsection{Scaling of the energy in the random fluctuation}

Now we estimate the $L^2(D)$ norm (the energy) of the random fluctuation $u^\eps - \E u^\eps$ which, in view of \eqref{e.uepsd.f}, is the same as the fluctuation $u^\eps - v^\eps - \E(u^\eps - v^\eps)$. 

Using the first order corrector $w^\eps$ defined by \eqref{e.weps}, and following the approach of \cite{B-CLH-08,BJ-CMS-11}, we represent $u^\eps - v^\eps$ in terms of a truncated Neumann series:
\begin{equation}\label{e.Neumann}
u^\eps - v^\eps = -\cG_\eps \nu_\eps v^\eps + \cG_\eps \nu_\eps \cG_\eps \nu_\eps v^\eps + \cG_\eps \nu_\eps \cG_\eps \nu_\eps (u^\eps - v^\eps).
\end{equation}
In particular, the fluctuations in $u^\eps - v^\eps$ can be written as
\begin{equation*}
u^\eps - \E u^\eps =  - \cG_\eps \nu_\eps v^\eps + \left(\cG_\eps \nu_\eps \cG_\eps \nu_\eps v^\eps - \E \cG_\eps \nu_\eps \cG_\eps \nu_\eps v^\eps \right) + \left(\cG_\eps \nu_\eps \cG_\eps \nu_\eps (u^\eps - v^\eps) - \E \cG_\eps \nu_\eps \cG_\eps \nu_\eps (u^\eps - v^\eps)\right).
\end{equation*}
The first term above is exactly $w^\eps$, which has mean zero and its energy was estimated in Lemma \ref{l.weps}. The next lemma provides an estimate for energy of the second term in the above expansion. 

\begin{lemma} \label{l.energy2} Suppose that the assumptions of Theorem \ref{t.size} are satisfied. Then there exists a constant $C > 0$, depending only on the universal parameters and $f$, such that
\begin{equation}\label{e.energy2}
\E \left\| \cG_\eps \nu^\eps \cG_\eps \nu^\eps v^\eps - \E  \cG_\eps \nu^\eps \cG_\eps \nu^\eps v^\eps \right\|^2_{L^2(D)} \le 
\begin{cases}
C\eps^{2d} \quad&\text{if } \; d=2,3,\\
C\eps^8|\log\eps|^2 \quad&\text{if }\; d =4,\\
C\eps^8 \quad&\text{if }\; 5 \le d \le 7.
\end{cases}
\end{equation}
\end{lemma}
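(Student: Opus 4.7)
The plan is to start from the Green's function representation
\begin{equation*}
Z^\eps(x,\omega) := (\cG_\eps \nu^\eps \cG_\eps \nu^\eps v^\eps)(x) = \int_{D^2} G_\eps(x,y_1)G_\eps(y_1,y_2)\,\nu^\eps(y_1)\nu^\eps(y_2)\,v^\eps(y_2)\,dy_1\,dy_2,
\end{equation*}
so that
\begin{equation*}
\E |Z^\eps(x) - \E Z^\eps(x)|^2 = \int_{D^4} G_\eps(x,y_1)G_\eps(y_1,y_2)G_\eps(x,y_3)G_\eps(y_3,y_4)\,v^\eps(y_2)v^\eps(y_4)\,C_4(\vec y)\,d\vec y,
\end{equation*}
where $C_4(\vec y) := \E[\nu_1\nu_2\nu_3\nu_4] - R^\eps(y_1-y_2)R^\eps(y_3-y_4)$ is the fluctuation of the four-point correlator about its $(1,2)(3,4)$-pairing contribution. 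I would then integrate the $x$-variable first and apply the bound \eqref{e.bddsG} to replace $\int_D G_\eps(x,y_1)G_\eps(x,y_3)\,dx$ by the kernel $|y_1-y_3|^{-(d-4)\vee 0}$, with a logarithmic replacement at $d=4$.

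Next, I would expand $C_4$ via the standard fourth-moment identity as
\begin{equation*}
C_4 = R^\eps(y_1-y_3)R^\eps(y_2-y_4) + R^\eps(y_1-y_4)R^\eps(y_2-y_3) + \kappa_4(\vec y),
\end{equation*}
where the $(1,2)(3,4)$-pairing has canceled against the subtracted mean; under assumption (S), Lemma \ref{l.cumu} controls $\kappa_4$ by a combination of $\varrho^\eps$-type factors applied to the inter-point distances. For each of the two surviving pairings, I would substitute the pointwise bound \eqref{e.Gsing} for $G_\eps(y_1,y_2)$ and $G_\eps(y_3,y_4)$ and then iterate the potential estimate \eqref{e.intpot} to carry out the four remaining integrations. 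The pairing $R^\eps(y_1-y_3)R^\eps(y_2-y_4)$ pins $y_1\approx y_3$ and $y_2\approx y_4$ on the scale $\eps$, so each copy of $R^\eps$ contributes a factor of $\eps^d$ upon integration, and the scaling regimes arise as follows: for $d\in\{2,3\}$ all remaining integrals converge uniformly in $\eps$, giving $\eps^{2d}$; at $d=4$ the borderline behavior of \eqref{e.intpot} produces two logarithmic factors for the total rate $\eps^{8}|\log\eps|^{2}$; for $5\le d\le 7$ the inherited kernel $|y_1-y_3|^{-(d-4)}$ together with the Green's function singularities has a non-integrable part that, after substitution onto the $\eps$-support of $R^\eps$, consumes $\eps^{d-4}$ from each pairing and saturates the rate at $\eps^{8}$. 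The second pairing is treated identically, and the cumulant contribution is estimated in the same framework, its integrability against the Green's function kernels being guaranteed by $\varrho^{1/2}\in L^{1}(r^{d-1}dr)$.

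The main obstacle is the dimension-by-dimension bookkeeping for these quadruple integrals and, in particular, showing that the two marginal transitions (at $d=4$ and at $d=5$) come out precisely as in \eqref{e.energy2}. Heuristically, the bound should be the square of that in \lref{weps}, because $Z^\eps = -\cG_\eps \nu^\eps w^\eps$ and therefore applying the same mechanism twice should produce $\eps^{2(d\wedge 4)}$ with the logarithmic borderline at $d=4$; the challenge is to extract this clean picture from the cumulant-type integral against $C_4$ and to ensure that the $\kappa_4$ contribution, which is the only place where the full strength of (S) is used, does not degrade the rate. The restriction $d\le 7$ in \tref{size} enters precisely because beyond that the iterated use of \eqref{e.intpot} produces non-integrable singularities that cannot be closed by the available $R^\eps$ factors.
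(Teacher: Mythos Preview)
Your proposal is essentially correct and follows the same route as the paper: express $I^\eps_2$ via Green's functions, integrate $x$ first using \eqref{e.bddsG}, control the fourth-moment fluctuation via Lemma~\ref{l.cumu}, and then perform the remaining quadruple integral dimension by dimension. One simplification: you need not decompose $C_4$ into the two $R^\eps$-pairings plus a cumulant $\kappa_4$; Lemma~\ref{l.cumu} bounds $C_4=\Psi_\nu$ \emph{directly} by $\vartheta^\eps(y_1-y_3)\vartheta^\eps(y_2-y_4)+\vartheta^\eps(y_1-y_4)\vartheta^\eps(y_2-y_3)$, so the paper works with just two terms $I^\eps_{21},I^\eps_{22}$ from the outset (and since $|R|\le C\vartheta$, your three-term decomposition collapses to the same estimate anyway). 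For $d\ge 5$ the paper makes the heuristic you describe precise by a change of variables followed by Hardy--Littlewood--Sobolev, which cleanly produces the $\eps^{2d-2(d-4)}=\eps^8$ rate; your accounting of the two lost factors $\eps^{-(d-4)}$ is correct---one comes from rescaling the explicit kernel $|y_1-y_3|^{4-d}$, the other from the convolution $\int |y'|^{2-d}|y'-\eps(y-z)|^{2-d}\,dy'\sim |\eps(y-z)|^{4-d}$.
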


Let $I^\eps_2$ denote the left hand side of \eqref{e.energy2}; it has the expression
\begin{equation*}
\begin{aligned}
I^\eps_2 & = \E \int_D \left( \int_{D^2} G_\eps(x,y)G_\eps(y,z)\left[ \nu^\eps(y)\nu^\eps(z) - \E \nu^\eps(y) \nu^\eps(z)\right] v^\eps(z) dz dy\right)^2 dx\\
& = \int_{D^5} G_\eps(x,y) G_\eps(x,y') G_\eps(y,z) G_\eps(y',z')v^\eps(z) v^\eps(z')\\
&\quad\quad\quad\quad \left[\E \left(\nu\left(\frac{y}{\eps}\right)\nu\left(\frac{y'}{\eps}\right)\nu\left(\frac{z}{\eps}\right)\nu\left(\frac{z'}{\eps}\right)\right)
-
R\left(\frac{y-z}{\eps}\right) R\left(\frac{y'-z'}{\eps}\right) \right] dz' dy' dz dy dx.
\end{aligned}
\end{equation*}
It is then evident that we need to estimate certain fourth order moments of $\nu(x,\omega)$, namely, the function
\begin{equation}\label{e.Psi}
\Psi_\nu(x,y,t,s) := \E\,\nu(x)\nu(y)\nu(t)\nu(s) - (\E\,\nu(x)\nu(y)) (\E\,\nu(t)\nu(s)).
\end{equation}
Were $\nu$ a Gaussian random field, its fourth order moments would decompose as a sum of  products of pairs of $R$. This nice property does not hold for general random fields; however, the following estimate for $\rho$-mixing random fields provides almost the same convenience. 

\begin{lemma}\label{l.cumu} Suppose $\nu(x,\omega)$ is a random field with maximal correlation function $\varrho$ defined as in \eqref{e.varrho}. Then
\begin{equation}\label{e.cumu}
|\Psi_\nu(x,t,y,s)| \le \vartheta(|x-t|)\vartheta(|y-s|) + \vartheta(|x-s|)\vartheta(|y-t|)
\end{equation}
where $\vartheta(r) = (K\varrho(r/3))^{\frac 1 2}$, $K = 4\|\nu\|_{L^\infty(\Omega\times D)}$.
\end{lemma}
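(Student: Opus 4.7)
The starting identity is
\begin{equation*}
\Psi_\nu(x,t,y,s) \;=\; \cov\bigl(\nu(x)\nu(t),\,\nu(y)\nu(s)\bigr),
\end{equation*}
which is immediate from the definition. Direct application of the maximal correlation inequality~\eqref{e.varrho} to the centered pair-products $\nu(x)\nu(t)-R(x-t)$ and $\nu(y)\nu(s)-R(y-s)$---which are $\cF_{\{x,t\}}$- respectively $\cF_{\{y,s\}}$-measurable with variance at most $\|\nu\|_{L^\infty}^4$---already yields a one-term estimate $|\Psi_\nu|\le \varrho(\dist(\{x,t\},\{y,s\}))\,\|\nu\|_{L^\infty}^4$. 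This is too crude: it does not expose the product structure in~\eqref{e.cumu}.

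To extract the Gaussian-like product form I would rewrite $\E\nu(x)\nu(y)\nu(t)\nu(s)$ using the two \emph{other} $2$-$2$ splittings of $\{x,t,y,s\}$. The identity
\begin{equation*}
\E\nu(x)\nu(y)\nu(t)\nu(s) \;=\; \cov\bigl(\nu(x)\nu(y),\nu(t)\nu(s)\bigr) + R(x-y)R(t-s)
\end{equation*}
and its counterpart for the crossing pairs $\{x,s\},\{y,t\}$ produce two further representations
\begin{align*}
\Psi_\nu &= \cov\bigl(\nu(x)\nu(y),\nu(t)\nu(s)\bigr) + R(x-y)R(t-s) - R(x-t)R(y-s), \\
\Psi_\nu &= \cov\bigl(\nu(x)\nu(s),\nu(y)\nu(t)\bigr) + R(x-s)R(y-t) - R(x-t)R(y-s).
\end{align*}
In each representation the new covariance is again controlled by~\eqref{e.varrho} with the corresponding pair-pair split, while each individual $R$-factor satisfies $|R(r)|\le \varrho(r)\|\nu\|_{L^\infty}^2$---this being~\eqref{e.varrho} applied to the one-point sets with $\varphi_i=\nu$, which is mean-zero.

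The remaining step is a triangle-inequality/combinatorial argument that converts the various $\varrho$-factors appearing in these three representations into products of the form $\vartheta(\cdot)\vartheta(\cdot)$. For any four points in $\R^d$, each within-pair distance is bounded by the sum of two between-pair distances, so at least one between-pair distance of some splitting dominates one third of a within-pair distance of another splitting; this is the origin of the factor $1/3$ in $\vartheta(r)=(K\varrho(r/3))^{1/2}$. Using $\varrho\le 1$ to obtain $\varrho(a)\varrho(b)\le\sqrt{\varrho(a)\varrho(b)}$, together with the monotonicity $\varrho(r)\le\varrho(r/3)$, each $\varrho$-term in the various estimates is absorbed into one of the two $\vartheta$-products appearing in~\eqref{e.cumu}; the constant $K$ collects the resulting numerical factors and the necessary powers of $\|\nu\|_{L^\infty}$ from the three representations.

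The main obstacle is the geometric case analysis. Depending on the spatial arrangement of $x,t,y,s$ (all four close; two tightly clustered pairs far apart; all four widely spread), different representations yield the sharpest bound, and one must verify that in every configuration the two pair-matchings shown on the right side of~\eqref{e.cumu} are sufficient---no third pair-matching is needed. Organizing this analysis cleanly, together with the triangle-inequality loss of $1/3$ and the tracking of the explicit constant $K=4\|\nu\|_{L^\infty}$, constitutes the bulk of the proof.
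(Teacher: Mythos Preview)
The paper does not supply its own proof of this lemma: immediately after the statement it writes ``We refer to \cite{HPP13} for the proof of this lemma.'' So there is no in-paper argument to compare against; the question is only whether your sketch matches the standard mixing-coefficient proof that the reference carries.

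Your outline is the right one and is essentially what the cited argument does. The three ingredients you list---the identity $\Psi_\nu=\cov(\nu\nu,\nu\nu)$, the alternative $2$--$2$ splittings, the one-point bound $|R(r)|\le\varrho(r)\|\nu\|_{L^\infty}^2$, and finally the geometric case analysis producing the $r/3$---are exactly the components of the proof in \cite{HPP13}. You are also right that the case analysis is where all the work sits: one fixes the pair $\{p,q\}$ realizing the maximum of the six pairwise distances, observes that by the triangle inequality each of the remaining two points lies within $|p-q|/2$ of $p$ or of $q$, and from there reads off which splitting has inter-set distance at least $|p-q|/3$; that splitting is then the one whose covariance representation is used. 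What you have written is a fair sketch rather than a proof, but it contains no wrong turn.

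One small caution: in the paper's applications (e.g.\ the estimate of $\Var(I_2^\eps)$ in Section~\ref{s.wt}) the two $\vartheta$-products that appear on the right are the two pairings \emph{other} than the one subtracted in $\Psi_\nu$, which is also what your rewriting via the alternative splittings naturally produces. The lemma as literally typeset repeats the subtracted pairing on the right-hand side; treat that as a labeling slip and aim for the ``other two pairings'' form, which is what the argument actually delivers and what is used downstream.
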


We refer to \cite{HPP13} for the proof of this lemma. Estimates of this type based on mixing property already appeared in \cite{B-CLH-08}. We refer to \cite{BJ-CMS-11} for an alternative way to control terms like $\Psi_\nu$, and to section \ref{s.discussion} for some comments on the connection of condition (S) with the lemma above.

\begin{proof}[Proof of Lemma \ref{l.energy2}]
Integrate over $x$ in the expression of $I^\eps_2$, and apply the estimates \eqref{e.Gsing}, \eqref{e.bddsG} and \eqref{e.cumu}. We find, for $d\ge 3$,
\begin{equation*}
\begin{aligned}
I^\eps_2 \le C&\left[\int_{D^4} \frac{(1+\bone_{d=4}|\log|y-y'||)|v^\eps(z) v^\eps(z')|}{|y-y'|^{(d-4)\vee 0} |y-z|^{d-2} |y'-z'|^{d-2}}\ \vartheta\left( \frac{y-y'}{\eps}\right) \vartheta\left( \frac{z-z'}{\eps}\right) dz' dy' dz dy\right.\\
& + \left. \int_{D^4} \frac{(1+\bone_{d=4}|\log|y-y'||)|v^\eps(z) v^\eps(z')|}{|y-y'|^{(d-4)\vee 0} |y-z|^{d-2} |y'-z'|^{d-2}}\ \vartheta\left( \frac{y-z'}{\eps}\right) \vartheta\left( \frac{z-y'}{\eps}\right) dz' dy' dz dy\right].
\end{aligned}
\end{equation*}
For $d=2$, the terms $|y-z|^{-(d-2)}$ and $|y'-z'|^{-(d-2)}$ above should be replaced by $1+|\log|y-z||$ and $1+|\log|y'-z'||$ respectively. Let $I^\eps_{21}$ and $I^\eps_{22}$ denote the two terms on the right hand side of the estimate above. In the following, we set $\rho$ to be the diameter of $D$.

\medskip

{\it Estimate of $I^\eps_{21}$}. We use the change of variables
\begin{equation*}
\frac{y-y'}{\eps} \mapsto y, \quad \frac{z-z'}{\eps} \mapsto z, \quad y' - z' \mapsto y', \quad z' \mapsto z'.
\end{equation*}
Then the integral in $I^\eps_{21}$ becomes, for $d \ge 3$,
\begin{equation*}
\frac{C\eps^{2d}}{\eps^{(d-4)\vee0}} \int_{B^2_{\rho/\eps}} dy dz \int_{B_\rho}dy' \int_{D} dz' \frac{(1+\bone_{d=4}|\log|\eps y||)|v^\eps(z') v^\eps(z'+\eps z)|}{|y|^{(d-4)\vee 0} |y'+\eps(y-z)|^{d-2} |y'|^{d-2}} \vartheta(y) \vartheta(z).
\end{equation*}
We integrate over $y'$ first and apply \eqref{e.intpot}, then integrate over $z'$ and obtain
\begin{equation*}
I^\eps_{21} \le C\|v^\eps\|_{L^2}^2 \eps^{2d-2(d-4)\vee0}\int_{B^2_{\rho/\eps}} \frac{(1+\bone_{d=4}|\log|\eps y||)(1+\bone_{d=4}|\log|\eps(y-z)|)\vartheta(y)\vartheta(z)}{|y|^{(d-4)\vee0}|y-z|^{(d-4)\vee0}} dy dz.
\end{equation*}

When $d=3$, the integral above is bounded because $\vartheta \in L^1(\R^d)$ thanks to assumption (S), and we have $I^\eps_{21} \le C \eps^{2d}$. When $d = 2$, the situation is similar; after the integral over $y'$, there is again no singularity in the denominator. Hence, $I^\eps_{21} \le C\eps^{2d}$.

When $d \ge 5$, by Hardy-Littlewood-Sobolev inequality \cite[Theorem 4.3]{LL-A}, we have for $p,r \in (1,\infty)$,
\begin{equation*}
\int_{\R^{2d}} \frac{(\vartheta(y)/|y|^{d-4})\vartheta(z)}{|y-z|^{d-4}} dy dz \le C\left\|\frac{\vartheta(y)}{|y|^{d-4}}\right\|_{L^p(\R^d)} \|\vartheta\|_{L^r(\R^d)},
\quad
\frac{1}{p} + \frac{d-4}{d} + \frac{1}{r} = 2.
\end{equation*}
Take $p = \frac{d}{4+\delta}$ and $r = \frac{d}{d-\delta}$ for any $(d-8)\vee 0<\delta< d-4$. Then because $\vartheta \in L^\infty \cap L^1(\R^d)$, $|y|^{4-d} \in L^p(B_1)$, the above is finite and we have $I^\eps_{21} \le C\eps^{8}$.

When $d = 4$, we need to control the integral
\begin{equation*}
\int_{B^2_{\rho/\eps}} (1+|\log|\eps y||) (1+|\log|\eps(y-z)||) \vartheta(y)\vartheta(z) dy dz,
\end{equation*}
where $D^* = \{y-y'-z+z'\,|\,y,y',z,z'\in D\}$ is some bounded region formed by certain combinations of points in $D$. As a result, the logarithmic terms are bounded away from the poles. Hence, the above integral is bounded by $O(|\log\eps|^2)$, and $I^\eps_{21} \le C\eps^8|\log\eps|^2$.

\medskip

{\it Estimate of $I^\eps_{22}$}. We apply the change of variables
\begin{equation*}
\frac{y-z'}{\eps} \mapsto y, \quad \frac{y'-z}{\eps} \mapsto y', \quad z - z' \mapsto z, \quad z' \mapsto z'.
\end{equation*}
Then the integral in $I^\eps_{22}$ becomes, for $d=2$,
\begin{equation*}
C\eps^{2d} \int_{B^2_{\rho/\eps}} dy dy' \int_{D}dz' \int_{B_\rho} dz |v^\eps(z') v^\eps(z'+z)| (1+|\log|z-\eps y||)(1+\log|z-\eps y'|) \vartheta(y) \vartheta(y').
\end{equation*}
Integrate over $z'$, $z$ and then over $y'$ and $y$. We find that $I^\eps_{22} \le C\eps^{2d}$. For $d\ge 3$, the same change of variables transforms $I^\eps_{22}$ to
\begin{equation*}
C\eps^{2d} \int_{B^2_{\rho/ \eps}} dy dy' \int_{B_\rho} dz \int_{D}dz'  \frac{(1+\bone_{d=4}|\log|z-\eps(y-y')||)|v^\eps(z') v^\eps(z'+z)|}{|z-\eps(y-y')|^{(d-4)\vee 0} |z-\eps y|^{d-2} |z+ \eps y'|^{d-2}} \vartheta(y) \vartheta(y'),
\end{equation*}
After an integration over $z'$, we only need to control
\begin{equation*}
C\eps^{2d} \int_{B^2_{\rho/\eps}} dy dy' \int_{B_\rho} dz \frac{(1+\bone_{d=4}|\log|z-\eps(y-y')||)}{|z-\eps(y-y')|^{(d-4)\vee 0} |z-\eps y|^{d-2} |z+ \eps y'|^{d-2}} \vartheta(y) \vartheta(y'),
\end{equation*}

When $d=3$, an integration over $z$ removes the singularities in the denominator. Then integrate over $y$ and $y'$ yields that $I^\eps_{22} \le C\eps^{2d}$.

When $d \ge 5$, we need to control the integral, after another change of variables $\eps^{-1}z - (y-y')\mapsto z$ and $-y \mapsto y$,
\begin{equation*}
\int_{\R^{3d}}dydy'dz \frac{C\eps^{8} \vartheta(y) \vartheta(y')}{|z|^{d-4} |z-y|^{d-2} |z-y'|^{d-2}} = \int_{\R^d} dz \frac{C\eps^{8}|K(z)|^2}{|z|^{d-4}},
\end{equation*}
with $K(z) = (|y|^{-(d-2)}*\vartheta)(z)$. Since $\vartheta \in L^1\cap L^\infty(\R^d)$, we have
\begin{equation*}
|K(z)| = \int_{B_1(z)} \frac{\vartheta(y)\ dy}{|z-y|^{d-2}}  + \int_{\R^d\setminus B_1(z)} \frac{\vartheta(y)\ dy}{|z-y|^{d-2}} \le \int_{B_1(z)} \frac{\|\vartheta\|_{L^\infty}\ dy}{|y-z|^{d-2}} + \int_{\R^d\setminus B_1(z)} \vartheta(y) dy \le C.
\end{equation*}
Moreover, by Hardy-Littlewood-Sobolev inequality, we have that
\begin{equation*}
\|K\|_{L^2(\R^d)} = \||y|^{-(d-2)} * \vartheta(y)\|_{L^2(\R^d)} \le C\|\vartheta\|_{L^{2d/(d+4)}(\R^d)} \le C\|\vartheta\|^{\frac{d-4}{2d}}_{L^{\infty}} \|\vartheta\|_{L^1}^{\frac{d+4}{2d}}.
\end{equation*}
Now we showed that $K \in L^\infty\cap L^2(\R^d)$. It follows that the integral to be controlled is finite and we have $I^\eps_{22} \le C\eps^{8}$.

When $d = 4$, after the same change of variable as in the case of $d \ge 5$, we are left to control
\begin{equation*}
\eps^8 \int_{{B^2_{\rho/\eps}}} dy dy' \int_{B_{3\rho/\eps}} dz \frac{(1+\log|\eps z||)\vartheta(y)\vartheta(y') dy dy' dz}{|z-y|^2 |z-y'|^2} = \eps^8 \int_{B_{3\rho/\eps}} (1+|\log |\eps z| |) (K(z))^2 dz,
\end{equation*}
where $K(z) = (\bone_{B_{\rho/\eps}}(y) |y|^{-2}*\vartheta)(z)$. We verify again that $K\in L^\infty\cap L^{2-\delta}(\R^d)$ for any $\delta\in(0,1)$. Estimate the integral again by breaking it into pieces inside and outside $B_1$; we find $I^\eps_{22} \le C\eps^8|\log \eps|$.

\medskip

Combining these estimates above, we proved \eqref{e.energy2}.
\end{proof}

Moving on to the last term in the series \eqref{e.Neumann}, we observe that it cannot be controlled in the same manner as above. Indeed, the term $u^\eps - v^\eps$ is random and depends on $\nu(x,\omega)$ in a nonlinear way. As a result, when we move the expectation into the integral representation, like in step \eqref{e.wepsexp}, we cannot get a simple closed form in terms of $R$.

We hence choose not to address the interaction between $u^\eps- v^\eps$ and the random fluctuation $\nu^\eps$ in the potential directly. Instead, by an application of Minkowski's inequality, we have
\begin{equation*}
\|\E\, \cG_\eps \nu^\eps \cG_\eps \nu^\eps (u^\eps - u)\|_{L^2(D)} \le \E\, \| \cG_\eps \nu^\eps \cG_\eps \nu^\eps (u^\eps - u)\|_{L^2(D)}.
\end{equation*}
Thus, we use the trivial bound on the $L^1(\Omega,L^2(D))$ norm of the fluctuations in $ \cG_\eps \nu^\eps \cG_\eps \nu^\eps (u^\eps - u)$:
\begin{equation*}
r^\eps_2 := \E \| \cG_\eps \nu^\eps \cG_\eps \nu^\eps (u^\eps - u) - \left(\E\, \cG_\eps \nu^\eps \cG_\eps \nu^\eps (u^\eps - u) \right)\|_{L^2(D)} \le 2\,\E\, \| \cG_\eps \nu^\eps \cG_\eps \nu^\eps (u^\eps - u)\|_{L^2(D)},
\end{equation*}
and only control the energy of the last term in \eqref{e.Neumann} itself, as contrast to its variance.  

\begin{lemma} \label{l.energy3} Suppose that the assumptions of Theorem \ref{t.size} are satisfied. Then there exists some constant $C$, depending only on the universal parameters and $f$, such that
\begin{equation}\label{e.energy3}
\E \left\| \cG_\eps \nu^\eps \cG_\eps \nu^\eps (u^\eps - v^\eps) \right\|_{L^2(D)} \le 
\begin{cases}
C\eps^{d} \quad&\text{if } \; d=2,3,\\
C\eps^4|\log\eps|^{\frac 32} \quad&\text{if }\; d =4,\\
C\eps^{6-\frac{d}{2}} \quad&\text{if }\; d\ge 5.
\end{cases}
\end{equation}
\end{lemma}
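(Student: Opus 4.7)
The plan is to exploit the Hilbert--Schmidt structure of the operator $T^{\eps,\omega} := \cG_\eps \nu^\eps \cG_\eps \nu^\eps$. Its Schwartz kernel factors as $K^{\eps,\omega}(x,z) = \tilde K^{\eps,\omega}(x,z)\, \nu^\eps(z)$, where
\begin{equation*}
\tilde K^{\eps,\omega}(x,z) := \int_D G_\eps(x,y)\, \nu^\eps(y)\, G_\eps(y,z)\, dy.
\end{equation*}
Because $|\nu^\eps| \le M$ pointwise in $\omega$, the $L^2$--$L^2$ Hilbert--Schmidt bound together with Cauchy--Schwarz in $\omega$ yields
\begin{equation*}
\E\, \|T^{\eps,\omega}(u^\eps - v^\eps)\|_{L^2} \;\le\; C \bigl(\E\, \|\tilde K^{\eps,\omega}\|_{L^2(D\times D)}^2\bigr)^{1/2} \bigl(\E\, \|u^\eps - v^\eps\|_{L^2}^2\bigr)^{1/2}.
\end{equation*}

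The second factor is already in hand: by \eqref{e.uvw} and Lemma \ref{l.weps} it is bounded by $C\eps^{(d\wedge 4)/2}$, with an extra $|\log\eps|^{1/2}$ at $d=4$. So the core task is to bound the first factor. Expanding the square and using $\E[\nu^\eps(y_1)\nu^\eps(y_2)] = R((y_1-y_2)/\eps)$ gives
\begin{equation*}
\E\, \|\tilde K^{\eps,\omega}\|_{L^2}^2 = \int_{D^4} G_\eps(x,y_1)G_\eps(x,y_2)G_\eps(y_1,z)G_\eps(y_2,z)\, R\!\left(\tfrac{y_1-y_2}{\eps}\right) dx\, dz\, dy_1\, dy_2.
\end{equation*}

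I apply \eqref{e.bddsG} once in $x$ and once in $z$, each producing a factor $|y_1-y_2|^{-((d-4)\vee 0)}$ (and a logarithmic factor at $d=4$). After the change of variables $\tilde y = (y_1-y_2)/\eps$ the estimate reduces to
\begin{equation*}
\E\, \|\tilde K^{\eps,\omega}\|_{L^2}^2 \;\le\; C\, \eps^{d - 2((d-4)\vee 0)} \int_{\R^d} \frac{|R(\tilde y)|}{|\tilde y|^{2((d-4)\vee 0)}}\, d\tilde y,
\end{equation*}
modulo squared-log corrections when $d=4$. This evaluates to $C\eps^d$ for $d=2,3$; to $C\eps^4|\log\eps|^2$ for $d=4$ (because, after splitting $|\log \eps|\tilde y|| \le |\log\eps| + |\log|\tilde y||$, the piece $\int R(\tilde y)|\log|\tilde y||^2 d\tilde y$ is finite by $R\in L^1\cap L^\infty$); and to $C\eps^{8-d}\int |R(y)||y|^{-2(d-4)}\,dy$ for $5\le d\le 7$. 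The finiteness of the integral in the last case is the only place dimension plays a role: it requires $2(d-4) < d$, i.e.\ $d < 8$, which is precisely the source of the hypothesis $d\le 7$.

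Multiplying the two factors gives the claimed rates $\eps^d$, $\eps^4|\log\eps|^{3/2}$, and $\eps^{6-d/2}$. The main conceptual obstacle is recognising that the nonlinear dependence of $u^\eps - v^\eps$ on $\nu^\eps$ need not be unwound by iterating the Neumann series further: the sharp second-moment bound for $\|u^\eps-v^\eps\|_{L^2}^2$ provided by Lemma \ref{l.weps} already absorbs one iteration, so only a second-moment bound on $\nu^\eps$ enters the remaining Hilbert--Schmidt factor, and no fourth-order cumulant control along the lines of Lemma \ref{l.cumu} is needed. The rest is bookkeeping of singular integrals via \eqref{e.intpot} and of the logarithmic corrections at the borderline dimension $d=4$.
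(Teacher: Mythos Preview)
Your proof is correct and essentially identical to the paper's. The paper factors $\|\cG_\eps \nu^\eps \cG_\eps \nu^\eps(u^\eps-v^\eps)\|_{L^2}\le M\|\cG_\eps\nu^\eps\cG_\eps\|_{L^2\to L^2}\|u^\eps-v^\eps\|_{L^2}$, applies Cauchy--Schwarz in $\omega$, and records the bound on $\E\|\cG_\eps\nu^\eps\cG_\eps\|_{L^2\to L^2}^2$ as a separate lemma (Lemma~\ref{l.mean.opL2}); but that lemma is proved by bounding the operator norm by exactly your Hilbert--Schmidt quantity $\|\tilde K^{\eps,\omega}\|_{L^2(D\times D)}$ and then computing the same four-point integral with the same double use of \eqref{e.bddsG}, yielding the same rates $\eps^d$, $\eps^4|\log\eps|^2$, $\eps^{8-d}$.
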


To prove this result, we estimate the operator norm of $\cG_\eps \nu^\eps \cG_\eps$, which is random since $\nu^\eps$ depends on $\omega$, and combine it with the control of $u^\eps - v^\eps$ which is obtained already earlier.

\begin{lemma}[Mean value of the operator norm $\|\cG_\eps \nu^\eps \cG_\eps\|_{L^2\to L^2}$]
\label{l.mean.opL2} Under the same assumptions of Theorem \ref{t.size}, there exists some universal constant $C$, such that
\begin{equation}\label{e.mean.opL2}
\E \|\cG_\eps \nu^\eps \cG_\eps\|^2_{L^2 \to L^2} \le \begin{cases}
C\eps^d, \quad&\text{if } d = 2,3\\
C\eps^4|\log\eps|^2, \quad&\text{if } d = 4,\\
C\eps^{8-d}, \quad&\text{if } d \ge 5.
\end{cases}
\end{equation}
\end{lemma}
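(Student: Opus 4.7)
The plan is to bound the operator norm by the Hilbert--Schmidt norm and then compute the expected squared Hilbert--Schmidt norm in closed form using the autocorrelation $R$. Observe that $\cG_\eps \nu^\eps \cG_\eps$ has integral kernel
\[
K^\omega(x,z) = \int_D G_\eps(x,y)\,\nu(y/\eps,\omega)\, G_\eps(y,z)\, dy,
\]
so that $\|\cG_\eps\nu^\eps\cG_\eps\|^2_{L^2\to L^2} \le \int_{D^2}|K^\omega(x,z)|^2\, dx\, dz$. Taking expectation, invoking \eqref{e.qdec} and Fubini yields
\[
\E \|\cG_\eps\nu^\eps\cG_\eps\|^2_{L^2\to L^2} \le \int_{D^4} G_\eps(x,y) G_\eps(x,y') G_\eps(y,z) G_\eps(y',z)\, R\!\left(\tfrac{y-y'}{\eps}\right) dx\, dz\, dy\, dy'.
\]
The non-negativity of $G_\eps$, which follows from the maximum principle and $\ol{q}\ge 0$, permits the replacement of $R$ by $|R|$ in what follows.

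Next, I integrate over $x$ and over $z$ separately and apply the uniform Green's function estimate \eqref{e.bddsG} in the same manner as in \lref{weps}: each bracketed integral is bounded by $C|y-y'|^{-(d-4)\vee 0}$, with an additional factor $1+|\log|y-y'||$ when $d = 4$. The change of variables $w = (y-y')/\eps$ with $y$ held fixed, followed by integration of $y$ over $D$, then reduces the problem to controlling
\[
C\,\eps^{d-2((d-4)\vee 0)} \int_{\R^d} \frac{|R(w)|}{|w|^{2((d-4)\vee 0)}}\, dw,
\]
which gives $O(\eps^d)$ for $d = 2, 3$ and $O(\eps^{8-d})$ for $d \ge 5$. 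For $d = 4$, the squared logarithmic factors are handled via $(1+|\log(\eps|w|)|)^2 \le C(1+|\log\eps|^2 + |\log|w||^2)$, which upon absorption into the $|R|$ integral produces the $\eps^4 |\log \eps|^2$ bound.

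The main obstacle is verifying the finiteness of $\int_{\R^d} |R(w)|\,|w|^{-2(d-4)}\, dw$ when $d \ge 5$: integrability at the origin demands $2(d-4) < d$, i.e.\ $d < 8$, which is precisely the source of the restriction $d \le 7$ throughout \tref{size}. Integrability at infinity is automatic from the bound $|R(w)| \le \varrho(|w|)\,\mathrm{Var}(q)$ combined with assumption (S); the same reasoning controls the weighted integral $\int |R(w)|\,|\log|w||^2\, dw$ needed in the $d = 4$ case.
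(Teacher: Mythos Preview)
Your proof is correct and essentially identical to the paper's own argument: the paper bounds the operator norm by the same quantity $\int_{D^2}\bigl(\int_D G_\eps(x,y)\nu^\eps(y)G_\eps(y,z)\,dy\bigr)^2\,dx\,dz$ (deriving the Hilbert--Schmidt bound by hand via Cauchy--Schwarz rather than naming it), takes the expectation to produce the four-fold integral with $R((y-y')/\eps)$, integrates out $x$ and $z$ via \eqref{e.bddsG}, and scales out $\eps$ exactly as you do. One minor remark: after the change of variables the $w$-integral is over $B_{\rho/\eps}$ rather than all of $\R^d$, so in the $d=4$ case the logarithmic weight at infinity is not actually an issue---the relevant bound is simply $(1+|\log(\eps|w|)|)^2 \le C(|\log\eps|^2 + \bone_{|w|<1}|\log|w||^2)$ on that ball, and both resulting integrals of $|R|$ are finite without invoking (S).
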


\begin{proof}
For any $h \in L^2(D)$, we have
\begin{equation*}
\|\cG_\eps \nu^\eps \cG_\eps h\|_{L^2}^2 = \int_D \left(\int_{D^2} G_\eps(x,y)\nu^\eps(y) G_\eps(y,z) h(z) dz dy\right)^2 dx.
\end{equation*}
Note that for almost every fixed $x \in D$,
\begin{equation*}
\left|\int_{D^2} G_\eps(x,y)\nu^\eps(y) G_\eps(y,z) h(z) dz dy\right| \le \|h\|_{L^2}  \left\|\int_D  G_\eps(x,y) \nu^\eps(y) G_\eps(y,\cdot)dy \right\|_{L^2}.
\end{equation*}
It then follows that
\begin{equation*}
\|\cG_\eps \nu^\eps \cG_\eps\|^2_{L^2 \to L^2} (\omega) \le \int_{D^2} \left( \int_D G_\eps(x,y) \nu^\eps(y,\omega) G_\eps(y,z)dy \right)^2 dz dx.
\end{equation*}
Taking expectation, and we fined
\begin{equation*}
\E \|\cG_\eps \nu^\eps \cG_\eps\|^2_{L^2 \to L^2} \le \int_{D^4} G_\eps(x,y) G_\eps(x,\eta) R\left(\frac{y-\eta}{\eps}\right) G_\eps(y,z) G_\eps(\eta,z) dy d\eta dz dx.
\end{equation*}
Integrate over $z$ and $x$ variables first. Using \eqref{e.Gsing} and \eqref{e.intpot}, we find that the integrals over $x$ and $z$ variables are estimated as in \eqref{e.bddsG}. Then we have
\begin{equation*}
\E \|\cG_\eps \nu^\eps \cG_\eps\|^2_{L^2 \to L^2} \le C\int_{D^2} \left(\frac{1+\bone_{d=4}|\log|y-\eta||}{|y-\eta|^{(d-4)\vee0}} \right)^2 \left|R\left(\frac{y-\eta}{\eps}\right)\right| dy d\eta.
\end{equation*}
Change variables in the above integral and carry out the analysis as before. We find that \eqref{e.mean.opL2} holds. Note that the estimates become useless for $d \ge 8$.
\end{proof}

\begin{proof}[Proof of Lemma \ref{l.energy3}] For each $\omega \in \Omega$, we have
\begin{equation*}
\|\cG_\eps \nu^\eps \cG_\eps \nu^\eps (u^\eps - v^\eps)\|_{L^2} \le M\|\cG_\eps \nu^\eps \cG_\eps\|_{L^2\to L^2}\|u^\eps - v^\eps\|_{L^2},
\end{equation*}
where $M$ is the uniform bound on the random potential in \eqref{e.qbdd}. Take expectation and then the desired estimate follows from \eqref{e.mean.opL2}, \eqref{e.uvw} and \eqref{e.l.weps}. 
\end{proof}

\subsection{Scaling factor of the random fluctuations in weak topology}
\label{s.wt}

In this section we aim to find the correct scaling factor such that the random fluctuation $u^\eps - \E\,u^\eps$, normalized properly according to this factor, converges with respect to the weak topology. For that purpose, we fix an arbitrary $\varphi \in L^2(D)$ with unit norm, and estimate $\E\,(u^\eps - \E\,u^\eps, \varphi)^2$.

Using the series expansion formula \eqref{e.Neumann}, we have
\begin{equation*}
\begin{aligned}
(u^\eps - \E\,u^\eps, \varphi) =\ -(\cG_\eps \nu_\eps v^\eps, \varphi) & + (\cG_\eps \nu^\eps \cG_\eps \nu^\eps v^\eps - \E\,(\cG_\eps \nu^\eps \cG_\eps \nu^\eps v^\eps), \varphi) \\
& + (\cG_\eps \nu^\eps \cG_\eps \nu^\eps (u^\eps - v^\eps) - \E\,(\cG_\eps \nu^\eps \cG_\eps \nu^\eps (u^\eps - v^\eps)), \varphi).
\end{aligned}
\end{equation*}
Since the operators $\cG_\eps$ and $\cG_\eps \nu^\eps \cG_\eps$ are self-adjoint on $L^2(D)$, we can move it to $\varphi$. Set $\psi^\eps = \cG_\eps \varphi$. The above
expression becomes
\begin{equation}\label{e.fw}
\begin{aligned}
(u^\eps - \E u^\eps, \varphi)  & = - (\nu^\eps v^\eps, \psi^\eps) + \left[ (\nu^\eps \cG_\eps \nu ^\eps v^\eps, \psi^\eps) - \E\,(\nu^\eps \cG_\eps \nu ^\eps v^\eps, \psi^\eps) \right] \\
& \quad\quad\quad\quad\quad\quad\; + \left[ (\nu ^\eps (u^\eps - v^\eps), \cG_\eps \nu^\eps \psi^\eps) - \E\,(\nu ^\eps (u^\eps - v^\eps), \cG_\eps \nu^\eps \psi^\eps) \right]\\
& := I^\eps_1 + [I^\eps_2 - \E\,I^\eps_2] + [I^\eps_3 - \E\,I^\eps_3].
\end{aligned}
\end{equation}
The aim now is to control the variances of $I^\eps_j$, $j = 1,2,3$.

\medskip

{\it Estimate for $I^\eps_1$}. For $I^\eps_1$, which is mean zero, we have
\begin{equation*}
\begin{aligned}
\E\,(I^\eps_1)^2 &= \E\left(\int_{D} \nu^\eps(x) v^\eps(x) \psi^\eps(x) dx\right)^2 = \int_{D^2} R\left(\frac{x-y}{\eps}\right) v^\eps(x)v^\eps(y)\psi^\eps(x)\psi^\eps(y) dx dy\\
&= \int_{\R^d} \left[ R^\eps * (v^\eps(y) \psi^\eps(y) \bone_{D}(y))\right](x) \, v^\eps(x) \psi^\eps(x) \bone_{D}(x) dx\\
&\le C\|R^\eps\|_{L^1(\R^d)} \|v^\eps \psi^\eps \bone_D\|^2_{L^2(\R^d)}.
\end{aligned}
\end{equation*}
Here, $R^\eps(y) = R(\frac{y}{\eps})$ is a shorthand notation. To obtain the last inequality, we applied H\"older's and Young's inequalities. Note that $\|R^\eps\|_{L^1(\R^d)} = \eps^d \|R\|_{L^1(\R^d)}$. Note also that $f, \varphi \in L^2(D)$ implies that $v^\eps, \psi^\eps \in H^2(D)$ which is embedded in $L^4(D)$ for all $2 \le d \le 7$. As a result, we conclude that $\E\,|I^\eps_1| \le C\eps^{\frac d 2}$.

\medskip

{\it Estimate for $I^\eps_2$}. Before calculating the variance of $I^\eps$, we first confirm that $\E(I^\eps_2)^2$ is larger than $\eps^d$ for $d \ge 4$. Hence, the target scaling $\eps^{\frac d 2}$ is correct (with respect to the weak topology) for the fluctuation $u^\eps - v^\eps - \E (u^\eps - v^\eps)$, not the difference itself. By direct computation, for $d\ge 3$,
\begin{equation}\label{e.GR}
\begin{aligned}
\E \left(I^\eps_2\right)^2 &= \int_{D^4} R\left(\frac{x-y}{\eps}\right) R\left(\frac{x'-y'}{\eps}\right) G_\eps(x,y) G_\eps(x',y') v^\eps(y)v^\eps(y')\psi^\eps(x)\psi^\eps(x') dx' dy' dx dy\\
&\lesssim \int_{D^4} \left|R\left(\frac{x-y}{\eps}\right) R\left(\frac{x'-y'}{\eps}\right)\right| \frac{|v^\eps(y) v^\eps(y')\psi^\eps(x)\psi^\eps(x')|}{|x-y|^{d-2}|x'-y'|^{d-2}} dy' dx' dy dx.
\end{aligned}
\end{equation}
For $d=2$, the last integral above should be replaced by
\begin{equation*}
\int_{D^4} \left|R\left(\frac{x-y}{\eps}\right) R\left(\frac{x'-y'}{\eps}\right) v^\eps(y) v^\eps(y')\psi^\eps(x)\psi^\eps(x') [1+|\log|x-y||][1+|\log|x'-y'||] \right|dy' dx' dy dx.
\end{equation*}
After the change of variable
\begin{equation*}
\frac{x-y}{\eps} \mapsto x, \quad \frac{x'-y'}{\eps} \mapsto x', \quad y \to y, \quad y' \to y',
\end{equation*}
the integral to be controlled, for $d \ge 3$, becomes
\begin{equation*}
\eps^{4} \int_{B^2_{\rho/\eps}} \int_{D^2}  |R(x)R(x')|\frac{|v^\eps(y)\psi^\eps(y+\eps x) v^\eps(y') \psi^\eps(y'+\eps x')|}{|x|^{d-2}|x'|^{d-2}} dy' dy dx' dx.
\end{equation*}
Integrating over $y$ and $y'$ and then over $x$ and $x'$, we find that the integral above is finite. Hence, $\E(I^\eps_2)^2$ is of order $\eps^4$ when $d\ge 3$. When $d=2$, the change of variables in the logarithmic functions yield the term $|\log\eps|^2$, and we have $\E(I^\eps_2)^2$ is of order $\eps^4|\log\eps|^2$. We confirm that the scaling of the second term in \eqref{e.Neumann}, when integrated against test functions, can be larger than or comparable to that of the leading term, when $d \ge 4$.

\medskip

We show next that if the mean is removed, the second term in \eqref{e.Neumann} is smaller than the leading one in all dimensions. Using the definition of $\Psi_\nu$ in \eqref{e.Psi} and the estimate in Lemma \ref{l.cumu}, we bound $\E\,(I^\eps_2 - \E I^\eps_2)^2 = \Var\,(I^\eps_2)$, for $d\ge 3$, by
\begin{equation*}
\begin{aligned}
\Var\,(I^\eps_2) & = &&\int_{D^4} \Psi_\nu\left(\frac{x}{\eps},\frac{y}{\eps},\frac{x'}{\eps},\frac{y'}{\eps}\right) G_\eps(x,y) G_\eps(x',y') v^\eps(y)v^\eps(y')\psi^\eps(x)\psi^\eps(x') dx' dy' dx dy\\
& \le &&C\int_{D^4} \vartheta \left(\frac{x-x'}{\eps}\right) \vartheta\left(\frac{y-y'}{\eps}\right) \frac{|v^\eps(y)v^\eps(y')\psi^\eps(x)\psi^\eps(x')|}{|x-y|^{d-2}|x'-y'|^{d-2}} dx' dy' dx dy\\
& &&+C\int_{D^4} \vartheta \left(\frac{x-y'}{\eps}\right) \vartheta\left(\frac{y-x'}{\eps}\right) \frac{|v^\eps(y)v^\eps(y')\psi^\eps(x)\psi^\eps(x')|}{|x-y|^{d-2}|x'-y'|^{d-2}} dx' dy' dx dy.
\end{aligned}
\end{equation*}

The second integral above is essentially the same with the first one if we interchange $x'$ and $y'$. Hence, we focus only on the first one. After the change of variables
\begin{equation*}
\frac{x-x'}{\eps} \mapsto x, \quad \frac{y-y'}{\eps} \mapsto y, \quad y - x' \mapsto x', \quad y' \mapsto y',
\end{equation*}
the first integral becomes
\begin{equation*}
  C\eps^{2d}\int_{\R^{2d}} dx dy \int_{B_\rho}dx' \int_D dy' \vartheta(x)\vartheta(y) \frac{|v^\eps(y'+\eps y)v^\eps(y')\psi^\eps(y'-x'+\eps x + \eps y)\psi^\eps(y'- x'+\eps y)|}{|x'-\eps x|^{d-2}|x'-\eps y|^{d-2}}.
\end{equation*}
Integrate over $y'$ first and use the fact that $\|v^\eps\|_{L^4} \le C$ and $\|\psi^\eps\|_{L^4(D)} \le C$. Then the above integral is bounded by, for $d \ne 4$,
\begin{equation*}
  \int_{\R^{2d}} dx dy \int_{B_\rho} \vartheta(x)\vartheta(y) \frac{C\eps^{2d}\ dx'}{|x'-\eps x|^{d-2}|x'-\eps y|^{d-2}} \le \int_{\R^{3d}} \frac{C\eps^{2d-(d-4)\vee0} \vartheta(x)\vartheta(y)\ dx'dxdy}{|x-y|^{(d-4)\vee0}}.
\end{equation*}
where we integrated over $x'$ and used \eqref{e.intpot} to have the inequality. The resulting integral is clearly finite. Hence we conclude that $\Var\,(I^\eps_2) \le C\eps^{2d}$ for $d= 3$ and that it is of order $\eps^{d+4}$ for $d \ge 5$.

When $d=2$, there is only logarithmic singularity to start with in the expression of $\Var\,(I^\eps_2)$, and we find $\Var\,(I^\eps_2) \le C\eps^{2d}$.

When $d=4$, the integral that remains after we integrate over $x'$ has a term of the form $(\log|\eps(x-y)|) \bone_{\eps(x-y) \in B_{2\rho}}$. It follows then that $\Var\,(I^\eps_2) \le C\eps^8|\log\eps|$.

To summarize, for $d \ge 2$, we have $\E|I^\eps_2 - \E\,I^\eps_2| \ll \E|I^\eps_1|$. That is, when the series expansion is integrated against test functions and the mean is removed, the second terms is much smaller than the leading term.   

\medskip

{\it Estimate for $I^\eps_3$}. For the last term, we control it by the crude estimate $\E(I^\eps_3 - \E\,I^\eps_3)^2 \le 2\E(I^\eps_3)^2$. From the expression $I^\eps_3 =  (\nu^\eps(u^\eps-v^\eps),\cG_\eps \nu^\eps \psi^\eps)$, we have
\begin{equation*}
\E\,|I^\eps_3| \le \E \left(\|\nu^\eps\|_{L^\infty} \|u^\eps - v^\eps\|_{L^2} \|\cG_\eps \nu^\eps \psi^\eps\|_{L^2}\right) \le C \left(\E\|u^\eps - v^\eps\|^2_{L^2} \,\E\|\cG_\eps \nu^\eps \psi^\eps\|_{L^2}^2\right)^{\frac 1 2}.
\end{equation*}
Since $\cG_\eps \nu^\eps \psi^\eps$ is exactly of the form of $w^\eps$ defined in \eqref{e.weps}. Owing to \eqref{e.uvw} and Lemma \ref{l.weps}, we conclude that $\E\,|I^\eps_3|$ is of order $\eps^d$ for $d=2,3$, of order $\eps^4|\log\eps|$ for $d=4$, and of order $\eps^4$ for $d \ge 5$. Hence, for all $2 \le d \le 7$, the truncation term in the Neumann series, with respect to the weak topology, has a scaling factor that is smaller than that of the leading term (which is of order $\eps^{\frac d 2}$).

\medskip

\begin{remark} We find that for $2 \le d \le 7$, the random fluctuation $u^\eps - \E\,u^\eps$ scales like $\eps^{\frac{d}{2}}$ when integrated against test functions, and the leading term is the dominating one. We do not expect the dimension constraint $d \le 7$ is not intrinsic. Firstly, it is related to the fact that we stopped at the second order iteration in the Neumann series, and had to control the last term by the crude estimate given by the Minkowski inequality (not taking advantage of removing the mean). Secondly, it is also needed when we claim that $\psi^\eps = \cG_\eps \nu^\eps \varphi$ is in $L^4(D)$. In general, if we assume stronger condition, namely $f \in C(\overline{D})$, then $v^\eps$ is always bounded, and we only need $\psi^\eps \in L^2(D)$ which holds in all dimensions if $\varphi \in L^2(D)$.
\end{remark}


We conclude this section by collecting the facts obtained above to give a proof of Theorem \ref{t.size}.

\begin{proof}[Proof of Theorem \ref{t.size}] Let $v^\eps$ be as defined in \eqref{e.opde}. In view of \eqref{e.uepsd.f} and Minkowski inequality, we have 
$$
\E \|u^\eps - \E\,u^\eps\|^2_{L^2} \le \E \left[ 2\|u^\eps - v^\eps\|^2_{L^2} + 2\,\|\E(u^\eps - v^\eps)\|^2_{L^2} \right] \le 4\,\E \|u^\eps - v^\eps\|^2_{L^2}.
$$
Owing to \eqref{e.uvw} and Lemma \ref{l.weps}, we have \eqref{e.t.size.f}.

In view of \eqref{e.uepsd}, \eqref{e.uvw}, Lemma \ref{l.weps} and Theorem \ref{t.homogp}, we have
\begin{equation*}
\E \|u^\eps - u\|_{L^2} \le \E\|u^\eps - v^\eps\|_{L^2} + \|v^\eps - u\|_{L^2} \le C\eps\|f\|_{L^2}.
\end{equation*}
This proves \eqref{e.t.size}.

Finally, to estimate $\E |(u^\eps - \E u^\eps, \varphi)_{L^2}|$ for an arbitrarily fied $\varphi \in L^2(D)$, without loss of generality we can assume $\|\varphi\|_{L^2} = 1$. Then this term is precisely what has been studied immediately above. With $I^\eps_j$, $j=1,2,3$, defined earlier, we have showed that for $2 \le d \le 7$, $\E\,|\sum_{j=1}^3 (I^\eps_j - \E\,I^\eps_j)| \le C\eps^{\frac d 2}$, which is precisely \eqref{e.t.size.fw}.
\end{proof}

\section{Limiting Distribution of the Random Fluctuation}
\label{s.cdist}

In this section, we study the limiting distribution of the scaled random fluctuation, $\eps^{-\frac d 2} \left(u^\eps - \E\,u^\eps\right)$, in functional spaces. As mentioned earlier, the choice of space depends on dimension. When $d = 1$, convergence in law in $C(D)$ of the random fluctuation was proved in \cite{BP-99,B-CLH-08}. We prove Theorem \ref{t.sl.23} below, which establishes convergence in law of the random fluctuation in $L^2(D)$ for $d = 2,3$, and in $H^{-1}(D)$ for $d = 4,5$.

Multiplying $\eps^{-\frac d 2}$ to the series expansion \eqref{e.Neumann}, we obtain the following expression for the scaled random fluctuation:
\begin{equation}\label{e.expansion}
- \frac{\cG_\eps \nu_\eps v^\eps}{\sqrt{\eps^d}} + \frac{\left(\cG_\eps \nu_\eps \cG_\eps \nu_\eps v^\eps - \E \cG_\eps \nu_\eps \cG_\eps \nu_\eps v^\eps \right)}{\sqrt{\eps^d}} + \frac{\left(\cG_\eps \nu_\eps \cG_\eps \nu_\eps (u^\eps - v^\eps) - \E \cG_\eps \nu_\eps \cG_\eps \nu_\eps (u^\eps - v^\eps)\right)}{\sqrt{\eps^d}}.
\end{equation}
Our strategy, as in \cite{B-CLH-08,BJ-CMS-11}, is to prove that the leading term $X^\eps = - \eps^{-\frac d 2} \cG_\eps \nu^\eps v^\eps$ contributes and converges in law to the right distribution depicted by Theorem \ref{t.sl.23}, and show that the other terms converge in stronger mode to the zero function and hence has no contribution to the limiting law. 

At the purely formal level, all these steps are the same as in the setting of non-oscillatory diffusion coefficients. Indeed, we already established controls for the second and last terms above in the previous section. Moreover, the $\eps$ dependence in $\cG_\eps$ and $v^\eps$ is not a problem, as we will see later, for the convergence of the characteristic functions of $X^\eps$, thanks to the fact that $\cG_\eps \varphi \to \cG \varphi$ in $L^2$ for any $\varphi \in H^{-1}(D)$. This dependence, however, does impose difficulty on showing the tightness of the measures of $\{X^\eps\}_{\eps}$. As discussed in section \ref{s.discussion}, the old approach for tightness in \cite{BGGJ12_AA} fails and new ideas are needed.

Our new approach is to use some non-optimal but convenient tightness criteria, described in Theorems \ref{t.tight} and \ref{t.tight2}, for probability measures on $H^k(D)$ that are induced by processes in $H^{k+s}(D)$, $k=-1,0$ and $s > 0$. Since we do need $s$ to be fractional in $(0,1)$, we recall some definitions regarding fractional Sobolev space; see \cite{DPV12} for reference. Given an open set $K \subset \R^d$, the fractional Sobolev space $H^s_0(K)$, for $s \in (0,1)$, is the closure of $C^\infty_0(K)$ in the norm
\begin{equation*}
\|u\|_{H^s(K)}^2 := \|u\|^2_{L^2(K)} + \int_{K^2} \frac{|u(x) - u(y)|^2}{|x-y|^{d+2s}} dx dy.
\end{equation*}
When $K = \R^d$, an equivalent norm for $ u \in H^s(\R^d)$ is
\begin{equation}\label{e.HsRd}
\|u\|_{H^s(\R^d)}^2 := \int_{\R^d} (1+|\xi|^2)^s |\scF u|^2(\xi) d\xi.
\end{equation}
Moreover, for $s \in (0,1)$, $H^{-s}(K)$ is defined to be the dual space $(H^s_0(K))'$, and in particular when $K = \R^d$, the above norm for $H^{-s}(\R^d)$ is still valid.

\subsection{Limiting distribution in $L^2(D)$ for dimension two and three}
\label{s.cL2}

For $d = 2,3$, we prove that the leading term $X^\eps$ in \eqref{e.Neumann} converges in law in $L^2(D)$ and show that the other terms vanish in the limit. The next lemma, together with Theorem \ref{t.tight}, yields tightness of $X^\eps$, which is the key step.


\begin{lemma}\label{l.Hs} Suppose that the conditions of Theorem \ref{t.size} are satisfied. Assume further that $d=2,3$. Then for any $s \in (0,\frac{1}{2})$, there exists a constant $C$, depending only on the universal parameters and $s$, such that
\begin{equation}\label{e.Hs}
\E \left\| \eps^{-\frac d 2} \cG_\eps \nu^\eps v^\eps \right\|_{H^s}^2 \le C.
\end{equation}
\end{lemma}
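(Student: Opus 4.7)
The $L^2$ part of the $H^s$ norm is already handled: \lref{weps} together with the hypothesis $d = 2, 3$ gives $\E\|w^\eps\|_{L^2}^2 \le C\eps^d \|f\|_{L^2}^2$ for $w^\eps := -\cG_\eps \nu^\eps v^\eps$, so $\E\|\eps^{-d/2} w^\eps\|_{L^2}^2 \le C$ is immediate. The whole task therefore reduces to controlling the Gagliardo seminorm, i.e.\ showing
\begin{equation*}
\eps^{-d}\, \E \int_{D^2} \frac{|w^\eps(x) - w^\eps(x')|^2}{|x - x'|^{d + 2s}}\, dx\, dx' \le C.
\end{equation*}

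First I would use the representation $w^\eps(x) - w^\eps(x') = \int_D [G_\eps(x, y) - G_\eps(x', y)]\, \nu^\eps(y)\, v^\eps(y)\, dy$, square it, take expectation, and invoke the autocorrelation $R$ to obtain
\begin{equation*}
\E |w^\eps(x) - w^\eps(x')|^2 = \int_{D^2} \Delta G_\eps(y)\, \Delta G_\eps(z)\, R\!\left(\frac{y-z}{\eps}\right) v^\eps(y)\, v^\eps(z)\, dy\, dz,
\end{equation*}
where $\Delta G_\eps(y) := G_\eps(x, y) - G_\eps(x', y)$. Cauchy--Schwarz combined with Young's convolution inequality, applied to $|R^\eps|$, then yields, with $F(y) := \Delta G_\eps(y)\, v^\eps(y)\, \bone_D(y)$,
\begin{equation*}
\E |w^\eps(x) - w^\eps(x')|^2 \le \|R^\eps\|_{L^1}\, \|F\|_{L^2}^2 \le C\eps^d \int_D |G_\eps(x, y) - G_\eps(x', y)|^2\, |v^\eps(y)|^2\, dy,
\end{equation*}
using $\|R^\eps\|_{L^1(\R^d)} = \eps^d \|R\|_{L^1}$, which is finite by assumption (S). Swapping the order of integration via Fubini, and using $\|v^\eps\|_{L^2}^2 \le C \|f\|_{L^2}^2$, the proof reduces to the purely deterministic uniform bound
\begin{equation*}
\sup_{y \in D} \Phi_\eps(y) \le C, \qquad \Phi_\eps(y) := \int_{D^2} \frac{|G_\eps(x, y) - G_\eps(x', y)|^2}{|x - x'|^{d + 2s}}\, dx\, dx'.
\end{equation*}

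The main obstacle is this uniform-in-$\eps$ bound on $\Phi_\eps(y)$, which is where the structural assumptions on the diffusion enter through the uniform Green's function estimates of \tref{homogp}. The plan is to split $D \times D$ into the close regime $\{|x - x'| < |x - y|\}$ and the far regime $\{|x - x'| \ge |x - y|\}$. In the close regime, the segment joining $x$ and $x'$ stays away from $y$, so the uniform gradient estimate $|\nabla_x G_\eps(x, y)| \le C |x - y|^{1-d}$ and the fundamental theorem of calculus give $|G_\eps(x, y) - G_\eps(x', y)| \le C |x - x'|\, |x - y|^{1 - d}$; in the far regime the triangle inequality together with the pointwise bound $|G_\eps(x, y)| \le C |x - y|^{2-d}$ (respectively the logarithmic analogue for $d = 2$) suffices. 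Direct integration in both regimes reduces to integrals of the form $\int_0^{\diam(D)} r^{3 - d - 2s}\, dr$, which converge for all $s < 1$ when $d = 2$, and precisely for $s < \tfrac{1}{2}$ when $d = 3$. This $d = 3$ calculation is what forces the restriction $s \in (0, \tfrac{1}{2})$ in the lemma, and the validity of the bound uniformly in $\eps$ hinges crucially on the Avellaneda--Lin type periodic Green's function estimates of \tref{homogp}.
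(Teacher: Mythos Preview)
Your argument is correct and in fact takes a cleaner route than the paper. By applying Cauchy--Schwarz and Young's convolution inequality \emph{before} the geometric analysis, you extract the factor $\|R^\eps\|_{L^1}=C\eps^d$ early and reduce the problem to the one-point deterministic estimate $\sup_{y\in D}\Phi_\eps(y)\le C$, which needs only the two-case close/far split and only $\|v^\eps\|_{L^2}$. The paper instead bounds the two-point quantity
\[
\int_{D^2}\frac{|(G_\eps(x,z)-G_\eps(y,z))(G_\eps(x,\xi)-G_\eps(y,\xi))|}{|x-y|^{d+2s}}\,dx\,dy\le C
\]
uniformly in $z,\xi$, which forces a three-region decomposition based on the relative positions of $z,\xi$ with respect to $x,y$; your one-point version is precisely the diagonal $z=\xi$ of this and is all that is actually needed once Young's inequality has been applied. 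Both approaches ultimately rest on the same Avellaneda--Lin uniform Green's function estimates and produce the same threshold $s<\tfrac12$ for $d=3$.

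Two small cosmetic points: (i) your close/far split should be at $|x-x'|<\tfrac12|x-y|$ (or invoke the $x\leftrightarrow x'$ symmetry of $\Phi_\eps$ to assume $|x-y|\le|x'-y|$) so that the segment genuinely stays a distance $\gtrsim|x-y|$ from $y$ and the mean-value estimate reads cleanly; (ii) in the far regime the bound on $|G_\eps(x',y)|$ is handled by this same symmetry reduction. Neither affects the substance of your argument.
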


\begin{proof} For each fixed $\omega \in \Omega$ and $\eps > 0$, $\eps^{-\frac d 2} \cG_\eps \nu^\eps v^\eps$ belongs to $H^1_0(D)$ and hence also $H^s_0(D)$ for any $s \in (0,1)$. In particular, its $H^s$ semi-norm has the expression
\begin{equation*}
\left[\eps^{-\frac d 2} \cG_\eps \nu^\eps v^\eps \right]_{H^s(D)}^2 = \frac{1}{\eps^d}\int_{D^2}  \frac{|(\cG_\eps \nu^\eps v^\eps)(x) - (\cG_\eps \nu^\eps v^\eps)(y)|^2}{|x-y|^{d+2s}} dy dx.
\end{equation*}
Take expectation and use $L^4$ bounds of $v^\eps$, we have
\begin{equation*}
\E \left[ \eps^{-\frac d 2} \cG_\eps \nu^\eps v^\eps \right]_{H^s(D)}^2 \le \frac{C}{\eps^d} \int_{D^4}\frac{|(G_\eps(x,z)-G_\eps(y,z))(G_\eps(x,\xi)-G_\eps(y,\xi))|}{|x-y|^{d+2s}} \left|R\left(\frac{z-\xi}{\eps}\right)\right| \ d\xi dz dy dx.
\end{equation*}

We claim: there exists $C$ depending only on the universal parameters and $s$, such that for all $\xi, z \in D$,
\begin{equation}\label{e.Hs.key}
\int_{D^2} \frac{|(G_\eps(x,z)-G_\eps(y,z))(G_\eps(x,\xi)-G_\eps(y,\xi))|}{|x-y|^{d+2s}} dy dx \le C,
\end{equation}
where the integral is understood in the principal value sense, as the limit of the integrals over $D\setminus \{B_\delta(z)\cup B_\delta(\xi)\}$, $\delta \to 0$. Since we obtain uniform in $\delta$ bounds for these integrals, we simplify notation and denote the integration domain still by $D^2$. We decompose the integration region $D^2$ into three parts $D^2_j$, $j=1,2,3$ as follows: in $D^2_1$, one of the points in $\{z,\xi\}$, namely $\xi$ without loss of generality, lies outside $B_\rho(x)\cup B_\rho(y)$ where $\rho = |x-y|$; in $D^2_2$, one of the points, namely $z$ without loss of generality, lies in $B_\rho(x)$ and satisfies $|z-x| \le |z-y|$ and at the same time $\xi \in B_\rho(y)$ and $|\xi - y| \le |\xi - x|$; in $D^2_3$, $\xi$ and $z$ cluster around one of the points in $\{x,y\}$; without loss of generality, assume this point is $x$, so  $z, \xi \in B_\rho(x) \cap \{\eta \,|\,|\eta - x| < |\eta -y|\}$. In Figure \ref{f.Hs}, the relative positions between $\{x,y,z,\xi\}$ are illustrated for each case.  

\begin{figure}
\begin{center}
\caption{\label{f.Hs} \it Decomposition criteria of the domain of integration based on the relative position between four points. Left: $(x,y) \in D^2_1$. Middle: $(x,y)\in D^2_2$. Right: $(x,y) \in D^2_3$.}
\bigskip

\includegraphics[width=5cm]{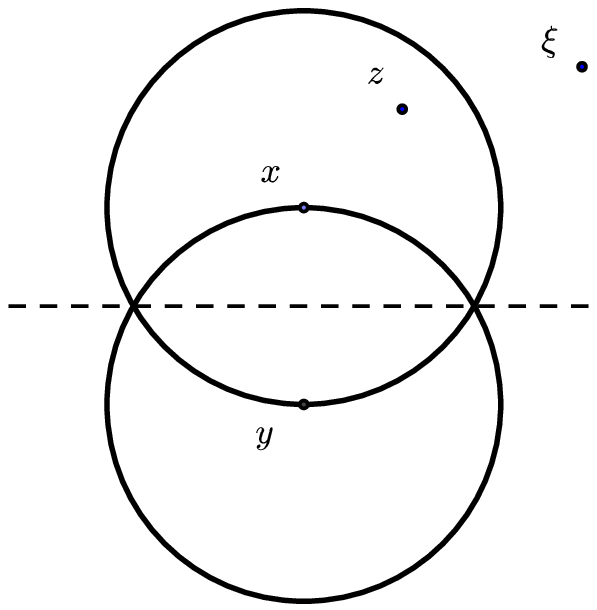}
\includegraphics[width=5cm]{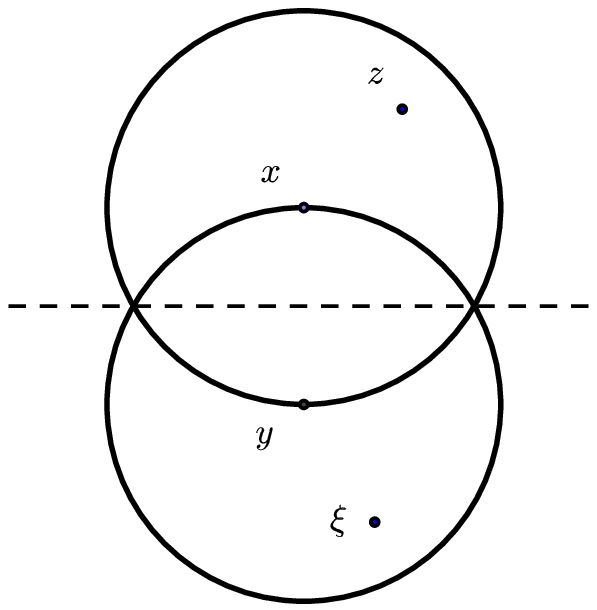}
\includegraphics[width=5cm]{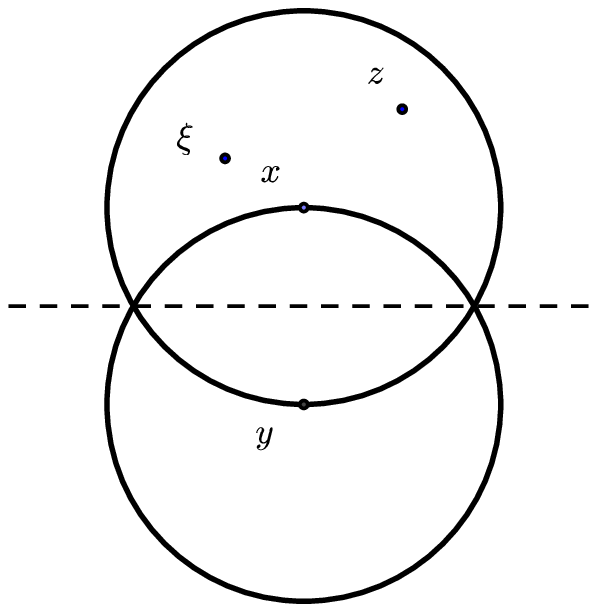}
\end{center}
\end{figure}

Let $I_j$ be the integral over $D^2_j$ of the integrand in \eqref{e.Hs.key}. We estimate $I_j$, $j=1,2,3$, separately and we focus on the case of $d = 3$. It is clear that when $d=2$, the only change is that the Green's function has logarithmic bound, and the analysis below can be adapted. 

On $D^2_1$, without loss of generality, we assume that $|z-x| \le |z-y|$ (if otherwise, we would switch the role of $x$ and $y$). Hence $|G_\eps(x,z) - G_\eps(y,z)| \le C|x-z|^{2-d}$. By mean value theorem,
\begin{equation*}
|G_\eps(x,\xi) - G_\eps(y,\xi)| \le |\nabla G_\eps(\eta,\xi)||x-y|, \quad \text{for some } \eta \text{ between } x \text{ and } y.
\end{equation*}
By the Gradient bound \eqref{e.DGsing} and the fact that $|\eta - \xi| \ge |y-\xi|/2$, we have
$$
|\nabla G_\eps(\eta,\xi)| \le \frac{C}{|\eta-\xi|^{d-1}} \le \frac{C}{|y-\xi|^{d-1}},
\quad\text{hence}\quad
|G_\eps(x,\xi) - G_\eps(y,\xi)| \le \frac{C|x-y|}{|y-\xi|^{d-1}}.
$$
As a result, we have,
\begin{equation*}
I_1 \le \int_{D^2_1} \frac{C}{|x-y|^{d+2s-1}} \frac{1}{|x-z|^{d-2}} \frac{1}{|y-\xi|^{d-1}} \ dx dy.
\end{equation*}
Integrate over $x$ first and then over $y$, using \eqref{e.intpot} in each step; we find as long as $0<s<\frac{1}{2}$, $I_1 \le C$ for some $C$ that only depend on the universal parameters and $s$.

On $D^2_2$, we have $|G_\eps(x,z) - G_\eps(y,z)| \le C|x-z|^{2-d}$ and $|G_\eps(x,\xi) - G_\eps(y,\xi)| \le C|y-\xi|^{2-d}$. At the same time, $|x-z| \le |x-y|$ and $|y-\xi| \le |x-y|$, so we may split the singularity into the integrals over $x$ and $y$ so that each of them is essentially not singular. That is,
\begin{equation*}
I_2 \le \int_{D^2_2} \frac{C}{|x-z|^{\frac{d}{2}+s} |y-\xi|^{\frac{d}{2}+s}} \frac{1}{|x-z|^{d-2}} \frac{1}{|y-\xi|^{d-2}}\ dx dy.
\end{equation*}
We note that the integral above can be separated, and as long as $0 < s < 2 - \frac{d}{2} = \frac{1}{2}$, each integral is finite and hence $I_2 \le C$.

On $D^2_3$, we assume without loss of generality that $z$ and $\xi$ cluster around $x$. Then we have $|G_\eps(x,\eta)-G_\eps(y,\eta)| \le C|x-\eta|^{2-d}$ for $\eta \in \{z,\xi\}$. At the same time, $|x-y| > |y - z|$. As a result, we have
\begin{equation*}
I_3 \le \int_{D^2_3} \frac{C}{|y-z|^{d-\tau} |x-z|^{2s+\tau}} \frac{1}{|x-z|^{d-2}} \frac{1}{|x-\xi|^{d-2}}\ dx dy.
\end{equation*}
We choose $\tau > 0$ so the integral over $y$ is uniformly bounded. The integral over $x$ is also bounded as long as $2s+\tau < (4-d)\wedge2 = 1$, and we have $I_3 \le C$. We note that for any $s \in (0,\frac{1}{2})$ there exists $\tau \in (0,1-2s)$ satisfying the constraint $2s+\tau < 1$.

The above bounds are uniform in $\delta$. Therefore, taking the limit $\delta \to 0$, we proved \eqref{e.Hs.key}. Integrate over $z$ and $\xi$ in the integral expression of $\E\,[\eps^{-\frac{d}{2}}\cG_\eps \nu^\eps v^\eps]_{H^s}^2$; in particular, integrating $R(\cdot/\eps)$ yields a factor of $\eps^d$ that cancels the one in the denominator. We conclude that $\E\,[\eps^{-\frac{d}{2}}\cG_\eps \nu^\eps v^\eps]_{H^s}^2 \le C$ for each fixed $s \in (0,\frac{1}{2})$. Combining this with $\E\,\|\eps^{-\frac{d}{2}}\cG_\eps \nu^\eps v^\eps\|_{L^2}^2 \le C$, which is due to \eqref{e.weps} for $d=2,3$, we proved \eqref{e.Hs}.
\end{proof}

\begin{remark}\label{r.tight} The key step in the proof above is to derive \eqref{e.Hs.key}, which concerns only the Green's function $G_\eps$ and hence is obtained from a purely deterministic argument. Indeed, the scaling factor $\eps^{-\frac d 2}$ plays a role only afterward when we integrate against $R^\eps$, and it disappears in the final estimate because it is the right scaling for integrals of $R^\eps$. In section \ref{s.lrc} where we consider the case of long range correlated random potential $q(x,\omega)$, the scaling in $X^\eps$ will be different, but the tightness of (the measures of) $X^\eps$, with the right scaling, is obtained in the same way as above.
\end{remark}

Next we address the convergence of the characteristic function of the measure of $X^\eps$. In view of Theorem \ref{t.Partha}, this amounts to proving

\begin{lemma}\label{l.ldist} Assume {\upshape(S)}. For any fixed $\varphi \in L^2(D)$, we have
\begin{equation}\label{e.ldist}
-\frac{1}{\sqrt{\eps^d}} (\cG_\eps \nu^\eps v^\eps, \varphi)_{L^2} \xrightarrow{\mathrm{distribution}} \cN \left(0,\sigma^2_\varphi\right), \quad\text{where}\quad 
\sigma^2_\varphi: = \sigma^2 \int_D u^2(x) (\cG\varphi)^2(x) \ dx.
\end{equation}
\end{lemma}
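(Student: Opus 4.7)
The plan is to reduce the statement to a classical central limit theorem (CLT) for the stationary field $\nu$ integrated against a \emph{deterministic} test function, and then invoke the mixing condition (S) to identify the Gaussian limit. The oscillation in $\cG_\eps$ and $v^\eps$ will be handled purely by $L^2$ replacement, using the $O(\eps)$ rate from periodic homogenization.

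First, by the self-adjointness of $\cG_\eps$ on $L^2(D)$, rewrite
\begin{equation*}
-\eps^{-d/2}(\cG_\eps \nu^\eps v^\eps,\varphi) = -\eps^{-d/2}(\nu^\eps v^\eps, \psi^\eps), \qquad \psi^\eps := \cG_\eps \varphi.
\end{equation*}
Let $\psi := \cG\varphi$ and recall $u$ from \eqref{e.hpde}. By \tref{homogp}(i), both $v^\eps \to u$ and $\psi^\eps \to \psi$ in $L^2(D)$ at rate $O(\eps)$, and they are uniformly bounded in $L^\infty(D)$ (or at least in $L^4(D)$ since $d \le 7$) via standard $W^{2,p}$ bounds. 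Hence $v^\eps \psi^\eps \to u\psi$ in $L^2(D)$.

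Second, I would show that replacing $v^\eps \psi^\eps$ by $u\psi$ inside $(\nu^\eps \cdot,1)$ costs only an $o(1)$ error in $L^2(\Omega)$. For any $f \in L^2(D)$, stationarity and Young's inequality give
\begin{equation*}
\E\!\left[\eps^{-d/2}\!\int_D \nu^\eps f\right]^2 = \eps^{-d}\!\int_{D^2} R\!\left(\frac{x-y}{\eps}\right) f(x) f(y)\,dx\,dy \le \eps^{-d}\|R^\eps\|_{L^1}\|f\|_{L^2}^2 = \sigma^2 \|f\|_{L^2}^2.
\end{equation*}
Applied to $f = v^\eps\psi^\eps - u\psi$, this forces $\eps^{-d/2}(\nu^\eps(v^\eps\psi^\eps-u\psi),1) \to 0$ in $L^2(\Omega)$, hence in probability.

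Third, it now suffices to prove that, with $\phi := u\cdot \psi \in L^2(D) \cap L^\infty(D)$,
\begin{equation*}
Z^\eps := -\eps^{-d/2}\int_D \nu(x/\eps,\omega)\,\phi(x)\,dx \ \xrightarrow{\mathrm{dist}}\ \cN(0,\sigma^2_\varphi).
\end{equation*}
The variance computation is standard: the substitution $z=(x-y)/\eps$ and dominated convergence (using $R \in L^1(\R^d)$ and $\phi$ bounded with compact support in $\overline D$) yield
\begin{equation*}
\E (Z^\eps)^2 = \int_D \phi(x) \int_{\R^d} R(z)\,\phi(x-\eps z)\bone_D(x-\eps z)\,dz\,dx \ \longrightarrow\ \sigma^2\|\phi\|_{L^2}^2 = \sigma^2_\varphi.
\end{equation*}
Gaussianity then follows from the CLT for stationary $\varrho$-mixing random fields integrated against a test function; this is precisely what was established under assumption (S) in \cite{B-CLH-08,BJ-CMS-11}, with the fourth-order moment control of \lref{cumu} handling non-Gaussian contributions. (Alternatively, one can argue directly by partitioning $D$ into cubes of side $\eps L_\eps$ with $L_\eps \to \infty$ slowly, approximating $\phi$ by piecewise constants on this mesh, and applying the standard CLT to the resulting block sums, whose near-independence is quantified by $\varrho(L_\eps) \to 0$.)

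The main obstacle, conceptually, is the passage from $\cG_\eps$ to $\cG$: without a good $L^2$ convergence rate for $v^\eps$ and $\psi^\eps$ one could not close the replacement step. Thankfully that rate is provided by \tref{homogp}, so the novel ingredient in this lemma reduces to a routine estimate, and the CLT itself is identical to the non-oscillatory setting treated in earlier works.
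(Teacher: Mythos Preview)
Your proposal is correct and follows essentially the same route as the paper: move $\cG_\eps$ onto $\varphi$ by self-adjointness, replace $v^\eps\psi^\eps$ by $u\psi$ in $L^2$ using the periodic homogenization rate (the paper bounds $\E|I^\eps_1-J^\eps_1|^2 \le C\|v^\eps\psi^\eps - u\psi\|_{L^2}^2$ exactly as you do, via Young's inequality), and then invoke the functional CLT from \cite{B-CLH-08} for $\eps^{-d/2}\int_D \nu^\eps \phi$. One cosmetic slip: your final constant should be $\|R\|_{L^1}$ rather than $\sigma^2$, but this is irrelevant to the argument.
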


\begin{proof}
Moving the operator $\cG_\eps$ to $\varphi$, we have
\begin{equation*}
-\frac{1}{\sqrt{\eps^d}} (\cG_\eps \nu^\eps v^\eps, \varphi)_{L^2} = -\frac{1}{\sqrt{\eps^d}} \int_D \nu\left(\frac{x}{\eps}\right) v^\eps(x) \psi^\eps(x) dx 
\end{equation*}
where $\psi^\eps = \cG_\eps \varphi$. Let $I^\eps_1[\varphi]$ denote the random variable above. Set $\psi = \cG \varphi$ and introduce
\begin{equation}
J^\eps_1[\varphi] := -\frac{1}{\sqrt{\eps^d}} \int_D \nu\left(\frac{x}{\eps}\right) u(x) \psi(x) dx.
\end{equation}
Since $\nu(x,\omega)$ is a stationary ergodic random field that has short range correlation, $u\psi \in L^2(D)$, we apply the well known functional central limit theorem, see e.g. Theorem in Bal \cite{B-CLH-08}, and obtain
\begin{equation}
J^\eps_1[\varphi] \xrightarrow{\rm{distribution}} I_1[\varphi] := \sigma \left(\int_D G(x,y) u(y) dW(y), \varphi\right)_{L^2} \sim \cN \left(0,\sigma^2 \int_D (u(y)\psi(y))^2 dy\right).
\end{equation}
The last relation $\sim$ above means equal in law. We note that
\begin{equation*}
\E |J^\eps_1[\varphi] - I^\eps_1[\varphi]|^2 = \frac{1}{\eps^d} \E \left(\int_D \nu\left(\frac{x}{\eps}\right)(v^\eps \psi^\eps - u\psi) dx\right)^2 \le C\|v^\eps \psi^\eps - u\psi\|_{L^2}^2,
\end{equation*}
and from periodic homogenization theory, we have $v^\eps \to u$ in $L^2$, $\psi^\eps \to \psi$ in $L^2$, as $\eps \to 0$; moreover, $v^\eps$ and $\psi^\eps$ are bounded in $L^\infty$ since $H^2(D)$ is embedded in $L^\infty(D)$ for $d = 2,3$. As a consequence, the right hand side above converges to zero as $\eps \to 0$. As a result,
\begin{equation*}
I^\eps_1[\varphi] = J^\eps_1[\varphi] + (I^\eps_1[\varphi] - J^\eps_1[\varphi])
\end{equation*}
is a sum of a term that converges in distribution to $I_1[\varphi]$ and a term that converges to zero in $L^2(\Omega)$. The desired result follows immediately.
\end{proof}


Finally, we collect the facts obtained above to give a proof of Theorem \ref{t.sl.23} (i).
 
\begin{proof}[Proof of Theorem \ref{t.sl.23} {\upshape(i)}]
Owing to Lemma \ref{l.energy2} and Lemma \ref{l.energy3}, for $d=2,3$, we have
\begin{equation*}
\E\,\|\eps^{-\frac d 2} \left(\cG_\eps \nu_\eps \cG_\eps \nu_\eps v^\eps - \E \cG_\eps \nu_\eps \cG_\eps \nu_\eps v^\eps \right) + \eps^{-\frac d 2}\left(\cG_\eps \nu_\eps \cG_\eps \nu_\eps (u^\eps - v^\eps) - \E \cG_\eps \nu_\eps \cG_\eps \nu_\eps (u^\eps - v^\eps)\right)\|_{L^2} \le \eps^{\frac d2}.
\end{equation*}
By Chebyshev inequality, these two terms, as random elements of $L^2(D)$, converge in probability to the zero function. It follows that the limiting distribution of $\eps^{-\frac d 2}(u^\eps - \E\, u^\eps)$ is given by that of the leading term $X^\eps(\omega) := -\eps^{-\frac d 2} \cG_\eps \nu^\eps v^\eps$.

Let $X$ be the right hand side of \eqref{e.sl.23}. It is a random element of $L^2(D)$ defined on some probability space $(\tilde{\Omega},\tilde{\cF},\tilde{\bP})$ on which the Wiener process $W(y,\tilde{\omega})$ is defined. Let the distribution of $X$ be $P^X$ and its characteristic function be $\phi^{P^X}$. We note that, for any $\varphi \in L^2(D)$, the inner product $(X,\varphi)$ has Gaussian distribution $\cN(0,\sigma^2_\varphi)$, with $\sigma^2_\varphi$ defined in \eqref{e.ldist}. Indeed,
\begin{equation*}
\E^{P^X}(X,\varphi) = \sigma\, \E^{P^X} \int_D \left(\int_{D} G(x,y) \varphi(x)\,dx\right) u(y) \,dW(y) = 0,
\end{equation*}
and
\begin{equation*}
\E^{P^X}(X,\varphi)^2 = \sigma^2\, \E^{P^X} \left(\int_D \left(\int_{D} G(x,y)\varphi(x) dx\right) u(y)\, dW(y)\right)^2 = \sigma^2 \int_D (\cG \varphi)^2 u^2\, dy,
\end{equation*}
This shows $(X,\varphi) \sim \cN(0,\sigma^2_\varphi)$ in law. By Lemma \ref{l.ldist}, for any fixed $\varphi \in L^2(D)$, the random variable $(X^\eps,\varphi)$ converges in distribution to $\cN(0,\sigma^2_\varphi)$. This shows that, as mentioned in Remark \ref{r.Partha}, the characteristic function of the law of $X^\eps$ converges to that of $X$. In view of Lemma \ref{l.Hs} and Theorem \ref{t.tight}, the distribution of $\{X^\eps\}_{\eps \in (0,1)}$ in $L^2(D)$ is tight as well. Consequently, by applying Theorem \ref{t.Partha}, we complete the proof of Theorem \ref{t.sl.23} (i).
\end{proof}

\subsection{Limiting distribution in $H^{-1}(D)$ for dimension four and five}
\label{s.cHs}

For dimension $d \ge 4$, we do not expect $\eps^{-\frac d 2}(u^\eps - \E\,u^\eps)$ to convergence in distribution in $L^2(D)$, because as shown in \eqref{e.t.size.f}, the fluctuations scale like $\eps^2|\log\eps|^{\frac 1 2}$ for $d=4$, and scale like $\eps^2$ for $d\ge 5$. In both cases, the scaling is much stronger than $\eps^{\frac d 2}$. Nevertheless, we prove that convergence in law in $H^{-1}(D)$ holds.

As before, the key step is to show that the probability measure in $H^{-1}(D)$ of the scaled leading term $\{X^\eps\}: = -\eps^{-\frac d 2} \cG_\eps \nu^\eps v^\eps$ in the expansion \eqref{e.Neumann} is tight, and to show that the characteristic function of this measure converges.

Let us first address the characteristic function $\phi^{X^\eps}$. We note that $L^2(D)$ is naturally embedded to $H^{-1}(D)$. For any $f \in L^2$, the linear form $L_f: H^1_0(D) \to \R$ given by $L_f(\psi) = (f,\psi)$ is clearly an element of $H^{-1}(D)$, and 
\begin{equation*}
\|L_f\|_{H^{-1}(D)} = \sup_{\psi \in H^1_0(D), \ \|\psi\|_{H^1} \le 1}\ L_f(\psi) \le \|f\|_{L^2}.
\end{equation*}
We henceforth identify $L_f \in H^{-1}(D)$ with $f$ when $f\in L^2(D)$. For any $\ell \in H^{-1}(D)$, let $l$ be the element in $H^1_0(D)$ that is related to $\ell$ by Riesz representation. Then we have 
\begin{equation*}
(f, \ell)_{H^{-1}(D)} = L_f(l) = (f,l).
\end{equation*}
That is, the $H^{-1}(D)$ inner product of $f \in L^2(D)$ with $\ell$ is the same as the $L^2$ inner product of $f$ with $l$. As a result, Remark \ref{r.Partha} applies for distribution on $H^{-1}(D)$: to show $\phi^{P^{X^\eps}}$ converges to $\phi^{P^X}$ as characteristic function of distributions in $H^{-1}(D)$, it suffices to prove $(X^\eps,h) \to (X,h)$ in distribution as random variables, for each fixed $h \in L^2(D)$.

Now we address the tightness of the measures of $\{X^\eps\}$. Our strategy is to control the mean of $\|X^\eps\|_{H^{-s}(D)}$ for some $s \in (0,1)$ and then apply Theorem \ref{t.tight2}. To this purpose, we first observe that $X^\eps \in L^2(D)$ and hence $X^\eps \in H^{-s}(D)$ if we set
\begin{equation}
X^\eps: H^s_0(D) \to \R
\quad\quad
\text{by}
\quad\quad
X^\eps (h) = \int_D X^\eps(x) h(x) dx.
\end{equation}
For any $h \in H^s_0(D)$, the above clearly defines a continuous linear functional. Moreover, if we identify the function $X^\eps$ with its extension to $\R^d$ by zero outside $D$, the above also defines an element in $H^{-s}(\R^d)$. Since $\partial D$ is regular (as a matter of fact, $C^{0,1}$ boundary is sufficient), any $h \in H^s_0(D)$ can be extended continuously to $Eh \in H^s(\R^d)$ which satisfies $\|Eh\|_{H^s(\R^d)} \le T \|h\|_{H^s(D)}$; see \cite[Theorem 5.4]{DPV12}; by duality $H^{-s}(\R^d)$ is continuously embedded in $H^{-s}(D)$. If fact, we have
\begin{equation}\label{e.XepsH-s}
\begin{aligned}
\|X^\eps\|_{H^{-s}(D)} &:= \sup_{w\in H^s_0(D), \|w\|_{H^s(D)} \le 1} (X^\eps,w)_{L^2} = \sup_{w\in H^s(D), \|w\|_{H^s_0(D)} \le 1} (X^\eps, Ew)_{L^2} \\
&\le  \sup_{v\in H^s(\R^d), \|v\|_{H^s(\R^d)} \le T} (X^\eps,v)_{L^2} \le T \|X^\eps\|_{H^{-s}(\R^d)}.
\end{aligned}
\end{equation}
We note that $\|X^\eps\|_{H^{-s}(\R^d)}$ can be calculated using the formula \eqref{e.HsRd}.

Consider, for each fixed $y \in D$, the Green's function $G_\eps(\cdot,y)$ for the Dirichlet problem \eqref{e.opde}. Extend $G_\eps(\cdot,y)$ to $\R^d$ by zero outside $D$, and let $G^y_\eps$ denote the extended function. Then $G^y_\eps$ defines naturally a linear form on $H^{s}(\R^d)$ by
\begin{equation}
\begin{aligned}
G^y_\eps : \quad H^s(\R^d) \quad &\to \quad &&\R,\\
 \quad  h \quad &\mapsto \quad && G^y_\eps(h) := \int_{\R^d} G^y_\eps(x)h(x)\ dx = \int_D G_\eps(x,y) h(x) dx,
\end{aligned}
\end{equation}
provided the integral is finite. Since $\cG_\eps$ is self-adjoint and by the Green's function representation, $w(y) := G^y_\eps(h)$ is the solution to the Dirichlet problem $\cL_\eps w = h$ on $D$, with zero boundary condition. Note that the restriction of $h$ on $D$ is in $H^s(D)$. Invoking elliptic regularity, we find that $w$ is bounded in $H^{s+2}(D)$. Let $s \in (0,1)$ if $d = 4$ and $s \in (\frac{1}{2},1)$ if $d =5$, then by embedding theorem of fractional Sobolev spaces, $H^{s+2}(D) \subset C^{0,\alpha}(D)$ with $\alpha = s+2 - \frac{d}{2} \in (0,1)$; see \cite[Theorem 1.4.4.1]{Grisvard}. As a result, $|G^y_\eps(h)| \le C\|h\|_{H^s}$ where $C$ only depends on the universal constants and the index $s$. We hence proved the following fact:

\begin{lemma}\label{l.Hsequiv} Assume {\upshape(A)} and $\ol{q} \ge 0$. Identify $G_\eps(\cdot,y)$, for each fixed $y \in D$, with the element in $H^{-s}(\R^d)$ defined above. Suppose $s \in (0,1)$ for $d=4$ and $s \in (\frac{1}{2},1)$ for $d=5$, then there exists $C > 0$, depending only on universal parameters and $s$, such that
\begin{equation}\label{e.GH-s}
\|G_\eps(\cdot,y)\|_{H^{-s}(\R^d)} \le C.
\end{equation}
\end{lemma}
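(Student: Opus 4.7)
The plan is to estimate the functional $h \mapsto G^y_\eps(h) = \int_D G_\eps(x,y)h(x)\,dx$ directly, by combining H\"older's inequality with a Sobolev embedding, using only the pointwise Green's function bound already stated in Theorem~\ref{t.homogp}. This route deliberately avoids invoking uniform-in-$\eps$ $H^{s+2}$ regularity for $\cL_\eps$, which is not automatic from standard elliptic theory since second-derivative estimates for periodic oscillatory operators pick up $1/\eps$ factors upon differentiating the coefficients.

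Concretely, for any $h \in H^s(\R^d)$ one has
\begin{equation*}
|G^y_\eps(h)| \le \|G_\eps(\cdot,y)\|_{L^p(D)}\,\|h\|_{L^q(D)},
\qquad \tfrac{1}{p} + \tfrac{1}{q} = 1.
\end{equation*}
The first factor is controlled by the uniform bound $|G_\eps(x,y)| \le C|x-y|^{2-d}$ from Theorem~\ref{t.homogp}: the integral $\int_D |x-y|^{-(d-2)p}\,dx$ is bounded uniformly in $y \in D$ and in $\eps$ precisely when $(d-2)p < d$, i.e., $p < d/(d-2)$. For the second factor I would invoke the classical embedding $H^s(\R^d) \hookrightarrow L^q(\R^d)$, valid for $s < d/2$ with $1/q = 1/2 - s/d$, giving $\|h\|_{L^q(D)} \le C\|h\|_{H^s(\R^d)}$. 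Matching conjugate exponents yields $p = 2d/(d+2s)$, and the integrability constraint $p < d/(d-2)$ rearranges to $s > (d-4)/2$, which is exactly $s>0$ when $d=4$ and $s>1/2$ when $d=5$. Thus $|G^y_\eps(h)| \le C\|h\|_{H^s(\R^d)}$ uniformly in $\eps$ and $y$, and taking the supremum over the unit ball of $H^s(\R^d)$ gives \eqref{e.GH-s}.

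There is no genuine obstacle here; the only care needed is the bookkeeping of exponents, but the dimensional thresholds on $s$ pop out automatically from balancing the Lebesgue integrability of $|x-y|^{2-d}$ against the Sobolev embedding of $H^s$. The hypothesis $\ol{q} \ge 0$ is used only to ensure coercivity of $\cL_\eps$ so that $G_\eps$ is well-defined, and the periodic structure of $(a_{ij})$ enters solely through the uniform pointwise Green's function estimate already cited.
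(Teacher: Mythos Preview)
Your proof is correct and takes a genuinely different route from the paper's. The paper argues by duality: it recognizes $G^y_\eps(h)$ as the value at $y$ of the solution $w$ to $\cL_\eps w = h$ on $D$, then invokes elliptic regularity to place $w$ in $H^{s+2}(D)$ and applies the embedding $H^{s+2}(D) \hookrightarrow C^{0,\alpha}(D)$ with $\alpha = s+2 - d/2$, which requires exactly the stated ranges of $s$. You instead pair the uniform pointwise Green's function bound $|G_\eps(x,y)|\le C|x-y|^{2-d}$ from Theorem~\ref{t.homogp} with H\"older's inequality and the Sobolev embedding $H^s(\R^d)\hookrightarrow L^q(\R^d)$, and the exponent matching $p=2d/(d+2s)<d/(d-2)$ reproduces the same dimensional thresholds.

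Your approach is more elementary and, as you observe, more robust: it only uses the uniform Green's function estimate already recorded in the paper, whereas the paper's route requires that the $H^{s+2}$ regularity constant for $\cL_\eps$ be uniform in $\eps$. That uniformity is not a consequence of standard interior elliptic regularity (which would see the $C^\gamma$ norm of $A(\cdot/\eps)$ blow up), and the paper does not cite a source for it; so your route is in fact the cleaner justification of \eqref{e.GH-s}. The paper's approach, if the uniform $H^{s+2}$ bound can be supplied, has the mild advantage of yielding pointwise (indeed H\"older) control of $\cG_\eps h$, not just the dual norm bound, but that extra information is not used downstream.
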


Using this fact and the Fourier transform formula for the $H^{-s}(\R^d)$ norm, we can prove the following control of $\|X^\eps\|_{H^{-s}(\R^d)}$ which, together with Theorem \ref{t.tight2}, yields tightness of $\{X^\eps\}$. 

\begin{lemma}\label{l.H-s} Suppose that the conditions of Theorem \ref{t.size} are satisfied. Assume further that $d = 4,5$. Let $s \in (0,1)$ if $d = 4$ and $s \in (\frac{1}{2},1)$ if $d = 5$. Then there exists a constant $C > 0$, depending only on the universal parameters and on $s$, such that 
\begin{equation}\label{e.H-s}
\E \left\| X^\eps \right\|_{H^{-s}(D)}^2 \le C.
\end{equation}
\end{lemma}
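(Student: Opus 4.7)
The plan is to reduce to $\R^d$ via the embedding \eqref{e.XepsH-s} and then exploit the Fourier characterization \eqref{e.HsRd} of the $H^{-s}(\R^d)$ norm, which diagonalizes the quadratic expression and lets the randomness and the Green's function decouple cleanly. Writing
\[
X^\eps(x) = -\eps^{-d/2} \int_D G_\eps(x,y)\,\nu(y/\eps)\,v^\eps(y)\,dy,
\]
and observing that $X^\eps$ is supported in $\overline{D}$, its Fourier transform in $x$ is linear in the zero-extended Green's function: $\widehat{X^\eps}(\xi) = -\eps^{-d/2}\int_D \widehat{G^y_\eps}(\xi)\,\nu(y/\eps)\,v^\eps(y)\,dy$, where $\widehat{G^y_\eps}(\xi)$ denotes the $x$-Fourier transform of the extension $G^y_\eps$. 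Squaring, taking expectation, and applying Fubini against the weight $(1+|\xi|^2)^{-s}$ should lead to
\[
\E \|X^\eps\|^2_{H^{-s}(\R^d)} = \eps^{-d}\int_{D^2} v^\eps(y)\,v^\eps(z)\, R\!\left(\frac{y-z}{\eps}\right) \bigl(G^y_\eps,\, G^z_\eps\bigr)_{H^{-s}(\R^d)}\, dy\, dz.
\]

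The key inequality will then be Cauchy-Schwarz in $H^{-s}(\R^d)$, which, together with Lemma \ref{l.Hsequiv}, supplies the uniform bound $\bigl|(G^y_\eps, G^z_\eps)_{H^{-s}(\R^d)}\bigr| \le C^2$ for all $y,z\in D$ and $\eps>0$. Once that uniform control is in hand, the remaining factor is purely deterministic: the change of variables $w = (y-z)/\eps$ combined with Young's inequality and the fact that $R \in L^1(\R^d)$ yields
\[
\eps^{-d}\int_{D^2} |v^\eps(y)\,v^\eps(z)|\,\left|R\!\left(\frac{y-z}{\eps}\right)\right|\, dy\,dz \;\le\; C \|R\|_{L^1}\|v^\eps\|_{L^2}^2 \;\le\; C\|f\|_{L^2}^2,
\]
and combining this with \eqref{e.XepsH-s} gives \eqref{e.H-s}.

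The principal obstacle, and the reason the $d = 2, 3$ argument of Lemma \ref{l.Hs} cannot simply be recycled, is that in dimensions $4$ and $5$ the field $X^\eps$ has no uniform positive fractional regularity: the pointwise Green's function differences controlled in \eqref{e.Hs.key} cease to be integrable. Passing to the negative index $H^{-s}$ via Fourier duality transfers the entire regularity burden to the deterministic Green's function $G_\eps$, where Lemma \ref{l.Hsequiv} already furnishes a uniform-in-$\eps$ bound. The dimension-dependent restriction on $s$ (namely $s > 1/2$ when $d = 5$) enters exactly at that point, through the embedding $H^{s+2}(D) \hookrightarrow C^{0,\alpha}(D)$ with $\alpha = s + 2 - d/2 > 0$, while every remaining ingredient in the scheme is dimension-free.
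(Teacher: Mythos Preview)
Your proposal is correct and follows essentially the same route as the paper: reduce to $H^{-s}(\R^d)$ via \eqref{e.XepsH-s}, use the Fourier formula \eqref{e.HsRd}, take expectation to produce $R^\eps$, recognize the remaining kernel in the $\xi,x$-variables as the $H^{-s}(\R^d)$ inner product $(G^y_\eps,G^z_\eps)_{H^{-s}}$, bound it by Cauchy--Schwarz and Lemma \ref{l.Hsequiv}, and finish with the $L^1$ bound on $R^\eps$. Your write-up is in fact slightly cleaner than the paper's, which leaves the final Young-type step implicit.
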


\begin{proof} We identify $X^\eps$ with the element in $H^{-s}(D) \subset H^{-s}(\R^d)$ defined earlier. In view of \eqref{e.XepsH-s}, we have
\begin{equation*}
\|X^\eps\|_{H^{-s}(D)}^2 \le C\|X^\eps\|_{H^{-s}(\R^d)} = \frac{1}{\eps^d} \int_{\R^d} |\scF X^\eps(\xi)|^2 (1+|\xi|^2)^{-s} d\xi,
\end{equation*}
where $\scF X^\eps$ denotes the Fourier transform of the (extended) function $X^\eps$. Using the integral representation of $X^\eps$, we rewrite the above as
\begin{equation*}
\|X^\eps\|_{H^{-s}}^2 = \frac{1}{\eps^d} \int_{\R^d} d\xi (1+|\xi|^2)^{-s} \int_{\R^{2d}}dx dy \int_{D^2} dz dt e^{i\xi\cdot(x-y)} G_\eps(x,z)G_\eps(y,t) \nu^\eps(z) v^\eps(z) \nu^\eps(t) v^\eps(t),
\end{equation*}
where the Green's functions are extended by zero to $\R^d$ for their first variables. Take expectation in this formula; we have
\begin{equation*}
\E\,\|X^\eps\|_{H^{-s}}^2 = \frac{1}{\eps^d} \int_{\R^{2d}} \left(\int_{\R^{3d}}  \frac{e^{i\xi\cdot(x-y)} G_\eps(x,z)G_\eps(y,t)}{(1+|\xi|^2)^{s}} dx dy d\xi \right) R\left(\frac{z-t}{\eps}\right)v^\eps(z) v^\eps(t) dtdz.
\end{equation*}
We claim that for any $z$ and $t$ in $D$, 
\begin{equation}\label{e.tight.H-s}
\left\lvert \int_{\R^{3d}}  \frac{e^{i\xi\cdot(x-y)} G_\eps(x,z)G_\eps(y,t)}{(1+|\xi|^2)^{s}} dx dy d\xi \right\rvert \le C.
\end{equation}
Indeed, we recognize the quantity inside the absolute value sign to be
\begin{equation*}
\int_{\R^d} \frac{ \scF G^z_\eps(\xi) \, \overline{\scF G^t_\eps(\xi)}}{(1+|\xi|^2)^s} d\xi \le \left(\int_{\R^d} |\scF G_\eps^z(\xi)|^2 (1+|\xi|^2)^{-s}\right)^{\frac 1 2} \left(\int_{\R^d} |\scF G^t_\eps(\xi)|^2 (1+|\xi|^2)^{-s}\right)^{\frac 1 2}.
\end{equation*}
The term on the right hand side is precisely $\|G^z_\eps\|_{H^{-s}(\R^d)} \|G^t_\eps\|_{H^{-s}(\R^d)}$. In view of Lemma \ref{l.Hsequiv} we can apply \eqref{e.GH-s} to get an upper bound for the quantity above and prove \eqref{e.GH-s}. Then \eqref{e.tight.H-s} follows, which in turn completes the proof.
\end{proof}


Finally, we conclude this section by collecting the facts above and prove Theorem \ref{t.sl.23} (ii).

\begin{proof}[Proof of Theorem \ref{t.sl.23} {\upshape(ii)}] \noindent{\it Step 1: Limiting distribution of the leading term.} In view of Theorem \ref{t.tight} and Lemma \ref{l.H-s}, the probability measures on $H^{-1}(D)$ induced by $\{X^\eps\}$ is tight. To check the limit of the characteristic functions of $\{P^{X^\eps}\}$, it suffices to prove \eqref{e.t.Partha3}. This is done in Lemma \ref{l.ldist}. By Theorem \ref{t.Partha}, we conclude that $X^\eps \to X$ in distribution on $H^{-1}(D)$ where $X$ is defined to be the right hand side of \eqref{e.sl.23}.

\medskip

\noindent{\it Step 2: Convergence to zero of the higher order terms.} By Lemma \ref{l.energy2}, and $d = 4,5$, we see that the second term in $u^\eps - \E\,u^\eps$, i.e. $\cG_\eps \nu^\eps \cG_\eps \nu^\eps v^\eps - \E\,\cG_\eps \nu^\eps \cG_\eps \nu^\eps v^\eps$ converges, in $L^2(\Omega,L^2(D))$ and hence in $L^2(\Omega,H^{-1}(D))$, to the zero function. Similarly, the remainder term $\cG_\eps \nu^\eps \cG_\eps \nu^\eps (u^\eps - v^\eps) - \E\,\cG_\eps \nu^\eps \cG_\eps \nu^\eps (u^\eps-v^\eps)$ converges to the zero function in $L^1(\Omega,H^{-1}(D))$. These convergence results are stronger than the mode of convergence in distribution in $H^{-1}(D)$. The proof of Theorem \ref{t.sl.23} (ii) is thus complete.
\end{proof}

\section{The Long Range Correlated Setting}
\label{s.lrc}

In this section, we consider the case when the random potential $q(x,\omega)$ is constructed from a long range correlated Gaussian random field, as described in the assumptions (L) in section \ref{sec:results}. 

\medskip

For the scaling of the homogenization error, we have the following analogue of Theorem \ref{t.size}. We focus on the error $u^\eps - \E u^\eps$ because, as seen earlier, the main contribution to the deterministic error $\E u^\eps - u$ comes from the periodic oscillation in the diffusion coefficients, and Theorem \ref{t.homogp} (i) holds independent of the correlation length of $\nu(x,\omega)$.

\begin{theorem} \label{t.size.l}
Let $D \subset \R^d$ be an open bounded $C^{1,1}$ domain, $u^\eps$ and $u$ be the solutions to \eqref{e.rpde} and \eqref{e.hpde} respectively. Suppose that {\upshape(A)(P)} and {\upshape(L)} hold, $f \in L^2(D)$. Then, there exists positive constant $C$, which depends only on the universal parameters, such that if $2 \le d \le 5$ and $0 <\alpha \le d$ or $6 \le d \le 7$ and $0< \alpha < 6$,
\begin{equation}\label{e.t.size.f.l}
\E\,\|u^\eps - \E u^\eps\|_{L^2} \le \begin{cases}
C \eps^{\frac{\alpha}{2}\wedge 2} \|f\|_{L^2}, \quad&\text{if }\; d \ne 4,\\
C \eps^{\frac \alpha 2} \|f\|_{L^2}, \quad&\text{if }\; d = 4.
\end{cases}
\end{equation}
Moreover, for any $\varphi \in L^2(D)$, $2 \le d \le 7$ and $0 < \alpha < d$,
\begin{equation}\label{e.t.size.fw.l}
\E\,\left|(u^\eps - \E u^\eps, \varphi)_{L^2}\right| \le C \eps^{\frac \alpha 2} \|\varphi\|_{L^2} \|f\|_{L^2}.
\end{equation}
\end{theorem}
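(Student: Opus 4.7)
The plan is to follow the scheme from section \ref{s.rerror}, replacing each short-range input by its long-range analogue from assumption (L). The two key modifications are: (i) the polynomial bound $|R(x)|\le C(1+|x|)^{-\alpha}$ from Lemma \ref{lem:tail} replaces the $L^1$ integrability of $R$; (ii) the mixing-based fourth moment bound of Lemma \ref{l.cumu} is replaced by Lemma \ref{lem:fourth}, which under assumption (L3) expresses $\Psi_\nu$ in terms of pair products of the Gaussian correlation $R_g$, mirroring the Isserlis--Wick formula for the underlying field $g$.

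For the energy bound \eqref{e.t.size.f.l}, Minkowski reduces the task, via \eqref{e.uepsd.f} and \eqref{e.uvw}, to bounding $\E\|w^\eps\|_{L^2}^2$ with $w^\eps = -\cG_\eps \nu^\eps v^\eps$. I would adapt the proof of Lemma \ref{l.weps}: start from \eqref{e.wepsexp}, integrate in $x$ via the Green's function estimate \eqref{e.bddsG}, and reduce to controlling $\|K^\eps\|_{L^1}$ for the kernel $K^\eps(y) = R(y/\eps)\,|y|^{-(d-4)\vee 0}\bone_{B_\rho}(y)$, with the appropriate logarithmic factor when $d=4$. A change of variables $y\mapsto y/\eps$ together with a splitting into $|y|\le 1$ and $|y|>1$ yields $\|K^\eps\|_{L^1}\le C\eps^{\alpha\wedge 4}$ for $d\ne 4$ and $\|K^\eps\|_{L^1} \le C\eps^\alpha$ for $d=4$; Young's inequality and the $L^2$ boundedness of $v^\eps$ then give \eqref{e.t.size.f.l} upon taking square roots.

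For the weak-topology bound \eqref{e.t.size.fw.l}, the three-term decomposition \eqref{e.fw} from section \ref{s.wt} carries over. The leading term satisfies
\begin{equation*}
\E(I^\eps_1)^2 \le C\|R^\eps\bone_{B_\rho}\|_{L^1}\|v^\eps\psi^\eps\|_{L^2}^2 \le C\eps^\alpha,
\end{equation*}
since $\|R^\eps\bone_{B_\rho}\|_{L^1} = \eps^d \int_{B_{\rho/\eps}} |R(y)|\,dy \le C\eps^\alpha$ whenever $\alpha<d$. For $I^\eps_2$ and $I^\eps_3$, I would invoke Lemma \ref{lem:fourth} to write $\Psi_\nu$ as a sum of pair products of $R_g$ and replay the changes of variables from section \ref{s.wt}, with $\vartheta$ replaced by $R_g$ (or by $|\cdot|^{-\alpha}$). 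The main obstacle is the dimension-by-dimension bookkeeping: unlike the mixing coefficient under (S), the kernel $R_g$ is not integrable and leaves long-range tails in the resulting convolutions, so the integrals must be controlled either by Hardy--Littlewood--Sobolev inequalities in the high-dimensional cases or by explicit tracking of leading scalings in the marginal ones. A secondary technical point is that Lemma \ref{lem:tail} only describes the asymptotic behavior of $R$, so near the diagonal one must use the uniform bound $|\nu|\le M$ to replace $R$ by a bounded function, splitting the integration region accordingly.
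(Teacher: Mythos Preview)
Your overall strategy matches the paper's: reduce \eqref{e.t.size.f.l} to a long-range analogue of Lemma \ref{l.weps} (the paper's Lemma \ref{lem:keyL2}), and treat \eqref{e.t.size.fw.l} via the decomposition \eqref{e.fw} with $I^\eps_1$ bounded directly, $\Var(I^\eps_2)$ controlled through the fourth-moment estimate, and $I^\eps_3$ handled crudely. Two points deserve care.

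First, you cannot apply Lemma \ref{lem:fourth} to $I^\eps_3$. The term $I^\eps_3 = (\nu^\eps(u^\eps - v^\eps), \cG_\eps \nu^\eps \psi^\eps)$ contains $u^\eps - v^\eps$, which depends on $\nu$ in a nonlinear and implicit way, so $\E(I^\eps_3)^2$ does not reduce to a fourth moment of $\nu$. The correct route (as in section \ref{s.wt}, which you cite) is the Cauchy--Schwarz bound $\E|I^\eps_3| \le C(\E\|u^\eps-v^\eps\|_{L^2}^2)^{1/2}(\E\|\cG_\eps\nu^\eps\psi^\eps\|_{L^2}^2)^{1/2}$, and then both factors are controlled by the long-range version of Lemma \ref{l.weps}. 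This gives $\E|I^\eps_3| \le C\eps^{\alpha\wedge 4}$ (with a logarithm when $\alpha = d = 4$), which is $o(\eps^{\alpha/2})$.

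Second, for $\Var(I^\eps_2)$ you should not expect to simply ``replay section \ref{s.wt} with $\vartheta$ replaced by $R_g$.'' Lemma \ref{l.cumu} under (S) produces only two product terms, both of the form $\vartheta(x-x')\vartheta(y-y')$ or $\vartheta(x-y')\vartheta(y-x')$, and in each the $\vartheta$-arguments are transversal to the Green's function singularities $|x-y|^{2-d}$, $|x'-y'|^{2-d}$. Lemma \ref{lem:fourth}, by contrast, yields fourteen pair-product terms indexed by $\mathcal{U}^*$, and some of these have an $R^\eps$ sharing its argument with a Green's function (e.g.\ $R^\eps(x-y)R^\eps(x-x')$), while others have the Green's function argument lying in the span of the two $R^\eps$ arguments. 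The paper organizes these into three categories and treats each by a separate change of variables followed by Hardy--Littlewood--Sobolev or refined Young inequalities together with the $L^p$ estimates of Lemma \ref{lem:RLp}. Your proposal anticipates the HLS step but not this combinatorial case split; without it, the terms where $R^\eps$ and $G_\eps$ share an argument will not be covered by the section \ref{s.wt} computation.
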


The proof of \eqref{e.t.size.f} is detailed in section \ref{s.size.l}, and the proof of \eqref{e.t.size.fw} is found in section \ref{s.size.fw}. This result shows that the random fluctuation $u^\eps - \E u^\eps$ caused by the long range correlated random potential scales like $\eps^{(\alpha\wedge4)/2}$ in energy norm, and scales like $\eps^{d/2}$ with respect to the weak topology. Since $\alpha < d$, we note that the random fluctuation in this setting is larger than the case of short range correlated potential. We mention that if $\alpha < 2$, then the random fluctuations dominate the deterministic fluctuation caused by the periodic diffusion. 

The next result exhibits the limiting law of the rescaled random fluctuation $\eps^{-\frac{\alpha}{2}}(u^\eps - \E u^\eps)$. In the presentation, we define formally $W^\alpha(dy)$ as $\dot{W^\alpha} (y) dy$; $\dot{W^\alpha}(y)$ is a centered stationary Gaussian random field with covariance function $\mathbf{E} (\dot{W^\alpha}(x)\dot{W^\alpha}(y)) = \kappa 
|x-y|^{-\alpha}$, where $\mathbf{E}$ denotes the expectation with respect to the distribution of $\dot{W}^\alpha$. Here, $\kappa = \kappa_gV_1^2 > 0$, where $\kappa_g$ and $V_1$ are defined in \eqref{e.tail} and \eqref{e.odd}.

\begin{theorem}
\label{thm:main3}
Suppose that the assumptions in Theorem \ref{t.size.l} hold. Let $\kappa$ be defined as in \eqref{e.odd} and $G(x,y)$ be the Green's function of \eqref{e.hpde}. Let $W^\alpha(dy)$ be defined as above. Then 
\begin{itemize}
\item[\upshape(i)] For $d = 2,3$, as $\eps \to 0$,
\begin{equation}
\frac{u_\eps - \E\{u_\eps\}}{\sqrt{\eps^{\alpha}}} \xrightarrow[\eps \to 
0]{\mathrm{distribution}} \sqrt{\kappa} \int_{D} G(x, y) u (y) W^\alpha(dy), \quad\quad \text{in } L^2(D).
\label{e.main3}
\end{equation}
\item[\upshape(2)] For $d = 4,5$, as $\eps \to 0$, the above holds as convergence in law in $H^{-1}(D)$.
\end{itemize}
\end{theorem}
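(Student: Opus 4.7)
The plan mirrors the proof of \tref{sl.23}: expand the random fluctuation via the Neumann series \eqref{e.Neumann}, identify
\[
Y^\eps := -\eps^{-\frac{\alpha}{2}}\,\cG_\eps \nu^\eps v^\eps
\]
as the only term that carries the limiting law, and show that the remaining two terms, scaled by $\eps^{-\alpha/2}$, vanish in a mode stronger than convergence in law in the target Hilbert space. By \tref{Partha} and \rref{Partha}, the proof then reduces to (i) tightness of the laws of $\{Y^\eps\}$ in $L^2(D)$ for $d=2,3$ and in $H^{-1}(D)$ for $d=4,5$, and (ii) convergence in distribution, for every $\varphi\in L^2(D)$, of the scalar $(Y^\eps,\varphi)$ to $(X,\varphi)_{L^2}$, where $X$ denotes the right-hand side of \eqref{e.main3}. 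The bounds for the two remaining terms of the series are obtained exactly as in \lref{energy2} and \lref{energy3}, with \lref{cumu} replaced by its long-range counterpart (Lemma \ref{lem:fourth}); each integration of $R_\nu(\cdot/\eps)$ now contributes a factor of $\eps^{\alpha\wedge 4}$ rather than $\eps^{d\wedge 4}$, and since $\alpha<d\le 5$ the resulting powers of $\eps$ comfortably beat the $\eps^{-\alpha/2}$ prefactor.

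For tightness, the key observation from \rref{tight} is that the kernel estimates \eqref{e.Hs.key} and \eqref{e.GH-s} are entirely deterministic and carry over verbatim. The only modification is that, instead of invoking $R\in L^1(\R^d)$, one uses the long-range decay $R_\nu(y)\sim\kappa|y|^{-\alpha}$ together with the rescaling
\[
\int_{D^2}\left\lvert R_\nu\!\left(\tfrac{z-\xi}{\eps}\right)\right\rvert dz\,d\xi \;\le\; C\,\eps^{\alpha}
\]
(valid for $\alpha\in(0,d)$), which precisely cancels the $\eps^{-\alpha}$ prefactor in the squared fractional Sobolev norm of $Y^\eps$. Inserting this into the arguments of \lref{Hs} and \lref{H-s} delivers
\[
\E\,\|Y^\eps\|_{H^s(D)}^2 \le C \ (d=2,3),\qquad \E\,\|Y^\eps\|_{H^{-s}(D)}^2 \le C\ (d=4,5),
\]
for some $s\in(0,\tfrac12)$ and some $s\in(\tfrac12,1)$ respectively, and tightness in the target space then follows from Theorems \ref{t.tight} and \ref{t.tight2}.

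For the characteristic functions, self-adjointness of $\cG_\eps$ on $L^2(D)$ gives
\[
(Y^\eps,\varphi) = -\eps^{-\frac{\alpha}{2}}\int_D \nu\!\left(\tfrac{x}{\eps}\right) v^\eps(x)\,\psi^\eps(x)\,dx,\qquad \psi^\eps:=\cG_\eps\varphi.
\]
Periodic homogenization gives $v^\eps\to u$ and $\psi^\eps\to\psi:=\cG\varphi$ in $L^2(D)$ with uniform $L^\infty$ bounds in the dimensions considered, so the argument in \lref{ldist} shows that the $L^2(\Omega)$-difference between $(Y^\eps,\varphi)$ and
\[
J^\eps[\varphi] := -\eps^{-\frac{\alpha}{2}}\int_D \nu\!\left(\tfrac{x}{\eps}\right) u(x)\,\psi(x)\,dx
\]
vanishes as $\eps\to 0$. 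Since $\nu=\Phi(g)$ with $\Phi$ of Hermite rank one by (L2) and $g$ a unit-variance Gaussian field with correlation $\sim\kappa_g|x|^{-\alpha}$ by (L1), the functional central limit theorem for rank-one Hermite functionals of long-range Gaussian fields (which is the reason for imposing assumption (L)) applies and yields
\[
J^\eps[\varphi] \xrightarrow{\mathrm{distribution}} \sqrt{\kappa}\int_D u(y)\,\psi(y)\,W^\alpha(dy),\qquad \kappa=V_1^2\kappa_g.
\]
Stochastic Fubini identifies this limit with $(X,\varphi)_{L^2}$, completing the proof upon assembling (i) and (ii).

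The main technical obstacle, as in \sref{cHs}, is the tightness step for $d=4,5$, which depends on the narrow window of admissible fractional indices in \lref{Hsequiv}; the long-range rescaling does not affect that window, since it only alters the $\eps$-power extracted from the correlation integral and leaves the deterministic $H^{-s}(\R^d)$ bound on $G_\eps$ untouched. A secondary technical point is the precise invocation of the CLT for rank-one Gaussian subordinated fields with long-range correlations, which is exactly what assumptions (L1)--(L3) are designed to support.
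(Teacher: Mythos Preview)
Your proposal is correct and follows essentially the same route as the paper: the paper likewise sets $X^\eps=-\eps^{-\alpha/2}\cG_\eps\nu^\eps v^\eps$, obtains tightness by reusing the deterministic estimates \eqref{e.Hs.key} and \eqref{e.tight.H-s} together with $\int_{D^2}|R^\eps(z-t)|\,dz\,dt\le C\eps^\alpha$ (exactly as you anticipated via \rref{tight}), handles the characteristic functions by reducing $(X^\eps,\varphi)$ to $J^\eps_1[\varphi]$ and invoking the non-central limit theorem for rank-one Gaussian subordinated fields (cited there as Lemma~4.3 of \cite{BGGJ12_AA}), and disposes of the second and third series terms via Lemmas \ref{l.energy2.l}, \ref{lem:keyL2}, \ref{l.mean.opL2.l}. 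One small caution: your claim of ``uniform $L^\infty$ bounds in the dimensions considered'' for $v^\eps,\psi^\eps$ is only literally true for $d=2,3$; for $d=4,5$ one should instead use the uniform $H^2\hookrightarrow L^4$ bound (available for $d\le 7$) to get $v^\eps\psi^\eps\to u\psi$ in $L^2$, which is all that the comparison $I^\eps_1[\varphi]-J^\eps_1[\varphi]\to 0$ actually requires.
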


\begin{remark}\label{rem:main3}
The right hand side of \eqref{e.main3} is an integral with respect to the multiparameter Gaussian random processes $W^\alpha$, we refer to \cite{Khoshnevisan} for the theory. Let $X$ denote the result of the integral. When $d \le 4$, $\alpha < 4$, the Green's function $G(x,\cdot)$ is in $L^{d/(d-\alpha/2)}$ and $X$ is a random element in $L^2(D)$. In general, $X$ is understood through the Fourier transform of its distribution. Given $h^* \in H^{-1}(D)$, $\phi^{P^X}(h^*)$ is defined to be $\mathbf{E} \exp \left( i \sqrt{\kappa} \int_D \langle G(\cdot,y), h^*(\cdot) \rangle u(y) W^\alpha(dy) \right)$. In particular, for any fixed positive integer $N$ and functions $\{\varphi_i; ~ 1 \le i \le 
N\}$ in $L^2(D)$, the random variables $I_i : = \langle 
X, \varphi_i\rangle = \sqrt{\kappa} \int_D \langle G(\cdot,y), \varphi_i(\cdot)\rangle u(y) W^\alpha(dy)$, $i = 1, 
\cdots, N$, is joint Gaussian, centered and have covariance matrix $\Sigma_{ij} := \mathbf{E} (I_i I_j)$ given by
\begin{equation}
\Sigma_{ij} := \kappa \int_{D^2} \frac{ (u \G \varphi_i) 
(y) (u \G \varphi_j)(z)}{|y-z|^\alpha} dy dz.
\label{e.covmat}
\end{equation}
\end{remark}

\smallskip

\subsection{Scaling of the energy in the random fluctuation}
\label{s.size.l}

In this section, we prove the first part of Theorem \ref{t.size.l}, which characterizes how does the random fluctuation $u^\eps - \E\,u^\eps$ scale in the energy norm.

The approach is the same as in the proof of Theorem \ref{t.size}: Lemma \ref{lem:keyL2} below provides estimate for the leading term in the series expansion \eqref{e.Neumann}, and hence on $\E \|u^\eps - v^\eps\|^2$. This, together with the estimate in Lemma \ref{l.mean.opL2.l} on the operator norm, controls the remainder term of \eqref{e.Neumann}. Finally, Lemma \ref{l.energy2.l} below controls the energy norm of the second term in \eqref{e.Neumann}.

The first result below is already in \cite{BGGJ12_AA}; we record the details here for the sake of completeness, and for demonstration of the key techniques.

\begin{lemma}\label{lem:keyL2}
Let $\cG_\eps$ be as defined below \eqref{e.Leps}; let $\nu(x,\omega)$ be the random field constructed in {\upshape(L)}. In particular, the autocorrelation function $R(x)$ has the asymptotic behavior like $\kappa_g |x|^{-\alpha}$ with $\alpha \le d$. Then, there exists constant $C$ that depends only on the universal parameters such that, for any $f \in L^2(D)$, we have:
\begin{equation}
\E ~ \|\cG_\eps \nu_\eps f\|_{L^2}^2 \le \begin{cases}
C \eps^{\alpha\wedge4} \|f\|^2_{L^2}, \quad&\text{if }\; d \ne 4,\\
C \eps^{\alpha} \|f\|_{L^2}^2, \quad&\text{if }\; d = 4, \,\alpha < 4,\\
C \eps^4 |\log \eps| \|f\|_{L^2}^2, \quad&\text{if }\; d=4,\, \alpha = 4.
\end{cases}
\label{eq:keyL2}
\end{equation}
\end{lemma}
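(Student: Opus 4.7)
The plan is to replicate the structure of the proof of Lemma~\ref{l.weps}, replacing the use of $\|R\|_{L^1(\R^d)}<\infty$ (valid in the short-range regime) by the polynomial-decay estimate $|R(x)| \lesssim (1+|x|)^{-\alpha}$ which, as noted in Lemma~\ref{lem:tail}, follows from assumption (L). Set $w = \cG_\eps \nu_\eps f$. Using the Green's function of \eqref{e.opde} and taking the expectation, one obtains
\begin{equation*}
\E\,\|w\|_{L^2}^2 = \int_{D^3} G_\eps(x,y)\,G_\eps(x,z)\,R\!\left(\tfrac{y-z}{\eps}\right)\,f(y)\,f(z)\,dx\,dy\,dz.
\end{equation*}
Integrating out $x$ using the uniform bound \eqref{e.bddsG}, then applying H\"older and Young as in \eqref{e.lem41.1}, reduces the problem to the $L^1$-estimate $\E\,\|w\|_{L^2}^2 \le C\,\|K^\eps\|_{L^1}\,\|f\|_{L^2}^2$, where for $d \ne 4$ the kernel is $K^\eps(u) = R(u/\eps)\,|u|^{-(d-4)\vee 0}\bone_{B_\rho}(u)$, and for $d = 4$ it is $K^\eps(u) = R(u/\eps)(1 + \bone_{B_\rho}(u)\,|\log|u||)$.

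\textbf{Rescaling and case analysis.} The next step is to evaluate $\|K^\eps\|_{L^1}$ by the substitution $u = \eps\tilde u$. For $d \ne 4$ this gives
\begin{equation*}
\|K^\eps\|_{L^1} = \eps^{\,d-(d-4)\vee 0}\int_{B_{\rho/\eps}} \frac{|R(\tilde u)|}{|\tilde u|^{(d-4)\vee 0}}\,d\tilde u.
\end{equation*}
Since $d-(d-4)\vee 0 = d\wedge 4$, and the integrand decays at infinity like $|\tilde u|^{-\alpha-(d-4)\vee 0}$, the threshold for integrability is $\alpha > d\wedge 4$. When $\alpha > 4$ (forcing $d\ge 5$) the $\tilde u$-integral is uniformly bounded, yielding $\|K^\eps\|_{L^1} = O(\eps^4)$; when $\alpha < d\wedge 4$ the integral over $B_{\rho/\eps}$ grows like $(\rho/\eps)^{d\wedge 4 - \alpha}$, producing $\|K^\eps\|_{L^1} = O(\eps^{\alpha})$. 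In either case one recovers $\eps^{\alpha\wedge 4}$, matching the claim.

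\textbf{The dimension four case and main obstacle.} For $d = 4$ the extra logarithmic factor in \eqref{e.bddsG} must be tracked. After rescaling,
\begin{equation*}
\|K^\eps\|_{L^1} \le \eps^4\int_{B_{\rho/\eps}} |R(\tilde u)|\bigl(1+|\log|\eps\tilde u||\bigr)\,d\tilde u,
\end{equation*}
and the key point is that for $\alpha < 4$ the integral $\int_0^{\rho/\eps} r^{3-\alpha}\,|\log(\eps r)|\,dr$, after the substitution $u = \eps r$, becomes $\eps^{\alpha-4}\int_\eps^1 u^{3-\alpha}\,|\log u|\,du$, and since $u^{3-\alpha}|\log u|$ is integrable near the origin for $\alpha < 4$, this contributes only a constant; hence $\|K^\eps\|_{L^1} = O(\eps^\alpha)$, with no spurious logarithm. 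At the critical exponent $\alpha = 4$ the same computation produces $\int_\eps^1 u^{-1}|\log u|\,du \sim |\log\eps|^2$, but combined with a more careful splitting near and far from the origin one obtains the sharper bound $\eps^4|\log\eps|$ in the lemma. The main technical obstacle is precisely this management of borderline logarithmic corrections at $\alpha = 4$ and the associated variable changes; beyond that, nothing new about the operator $\cL_\eps$ enters the argument, since only the uniform Green's function bounds of Theorem~\ref{t.homogp} are used.
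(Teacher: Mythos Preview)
Your proposal is correct and follows essentially the same approach as the paper: represent via the Green's function, integrate out $x$ using the uniform bounds \eqref{e.bddsG}, reduce to a one-variable integral of $|R^\eps|$ against the appropriate potential (you phrase this via Young's inequality on the convolution $K^\eps * \tilde f$ as in \eqref{e.lem41.1}, the paper via the change of variables $(y,y-z)\mapsto(y,z)$ and an integration over $y$, but these are equivalent and yield the same integral \eqref{eq:intRpot}), and then split that integral into the near-zero region $\{|z|\le T\eps\}$ where $R$ is merely bounded and the far region where $|R^\eps(z)|\le C\eps^\alpha|z|^{-\alpha}$. Your identification of the borderline $d=4$, $\alpha=4$ case and the need for the near/far splitting to tame the logarithm matches the paper's decomposition \eqref{e.D.dec} exactly.
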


\begin{proof}
The $L^2$ norm of $\cG_\eps \nu^\eps f$ has the following expression:
\begin{equation*}
\|\cG_\eps \nu^\eps f\|_{L^2}^2 = \int_D \left(\int_D G_\eps(x,y) \nu^\eps(y) f(y) dy\right)^2 
dx.
\end{equation*}
After writing the integrand as a double integral and taking expectation, we 
have
\begin{equation}
\E \|\cG_\eps \nu^\eps f\|_{L^2}^2 = \int_{D^3}  G_\eps(x,y)G_\eps(x,z) R^\eps(y-z)  f(y) f(z) dy 
dz dx.
\label{eq:L2expr}
\end{equation}
Use \eqref{e.Gsing} to bound the Green's functions. Integrate over $x$ and 
apply \eqref{e.intpot}. We get
\begin{equation}
\E \|\cG_\eps \nu^\eps f\|_{L^2}^2 \le C \int_{D^2}  \frac{1 + \bone_{d=4}|\log|y-z||}{|y-z|^{(d-4)\vee 0}} 
|R_\eps(y-z) f(y) f(z)| dy dz.
\label{eq:L2type}
\end{equation}
Change variable $(y, y-z) \mapsto (y,z)$. The above integral is bounded by
\begin{equation*}
\int_D \int_{B_\rho} \frac{1+\bone_{d=4}|\log|z||}{|z|^{(d-4)\vee0}} |R^\eps(z) f(y) f(y-z)| dy dz.
\end{equation*}
Here and in the sequel $B_\rho$ denotes the ball centered at zero with radius $\rho$ which is the diameter of $D$. Integrate over $y$ first, then we have:
\begin{equation}
\E \|\cG_\eps \nu^\eps f\|_{L^2}^2 \le C \|f\|_{L^2}^2 \int_{B_{\rho}} 
\frac{|R_\eps(z)|(1+\bone_{d=4}|\log|z||)}{|z|^{(d-4)\vee0}} dz.
\label{eq:intRpot}
\end{equation}
Decompose the integration region into two parts:
\begin{equation}\label{e.D.dec}
\left\{\begin{aligned}
& D_1: = \{|z\eps^{-1}| \le T\} \cap B_{\rho}, & & \text{on which we have } 
|R_\eps| \le M^2,\\
& D_2: = \{|z\eps^{-1}| > T\} \cap B_{\rho}, & & \text{on which we have } 
|R_\eps| \le C\eps^\alpha |x|^{-\alpha},
\end{aligned}
\right.
\end{equation}
where the bounds on $R^\eps$ above are obtained by (L2) and by Lemma \ref{lem:tail}; the constant $T$ is defined in that lemma. The integration on $D_1$ can be calculated explicitly, and it is bounded by
\begin{equation*}
C \int_{B_{\eps T}} \frac{1+\bone_{d=4}|\log|z||}{|z|^{(d-4)\vee0}} dz = C\int_{0}^{\eps T} \frac{r^{d-1}(1+\bone_{d=4}|\log r|)}{r^{(d-4)\vee 0}} dr.
\end{equation*}
When $d \ne 4$, this integral is of order $\eps^{d\wedge4}$. When $d=4$, this integral is of order $\eps^4|\log\eps|$.

Similarly, the integration over $D_2$ is bounded by
\begin{equation*}
C\eps^\alpha \int_{B_{\rho}\setminus B_{\eps T}} \frac{1+\bone_{d=4}|\log|z||}{|z|^{(d-4)\vee0 + 
\alpha}} dz = C\eps^\alpha \int_{\eps T}^{\rho} \frac{1+\bone_{d=4}|\log r|}{r^{(d-4)\vee 0 - d + 1 +  \alpha}} dr.
\end{equation*}
When $d \ne 4$, this integral is $C\eps^\alpha(\rho^{-(d-4)\wedge 0 + d-\alpha} - (T\eps)^{-(d-4)\wedge 0 + d -\alpha})$, and hence is of order $\eps^{\alpha\wedge4}$. When $d=4$, this integral has leading term $C\eps^\alpha(\rho^{4-\alpha}\log\rho - (T\eps)^{4-\alpha}|\log|\eps T||)$, and hence is of order $\eps^\alpha$ if $\alpha < 4$ and of order $\eps^4|\log \eps|$ if $\alpha = d = 4$.

Combining the above estimates on the integrals over $D_1$ and $D_2$, we obtain \eqref{eq:keyL2}.
\end{proof}

The above proof differs from that of Lemma \ref{l.weps} in the short range correlation setting only in the steps of controlling the integrals of $R^\eps$. In the long range correlation setting, because $R(x)$ is not integrable, we cannot expect to gain a factor of $\eps^d$ by scaling the variable in $R^\eps$. Instead, we gain a factor of $\eps^\alpha$ by using the asymptotic of $R^\eps$ outside an $T\eps$ ball; see Lemma \ref{lem:tail}. This method of controlling the integrals of $R^\eps$ will be used constantly in the sequel.

\medskip

\begin{lemma} \label{l.mean.opL2.l} Under the same assumptions of Theorem \ref{t.size.l}, there exists some constant $C$ depending only on the universal parameters, such that
\begin{equation}\label{e.mean.opL2.l}
\E \|\cG_\eps \nu^\eps \cG_\eps\|^2_{L^2 \to L^2} \le \begin{cases}
C\eps^\alpha, \quad&\text{if } d = 2,3,\\
C\eps^\alpha, \quad&\text{if } d = 4,\, \alpha < 4,\\
C\eps^4|\log\eps|^2, \quad&\text{if } d = 4,\, \alpha = 4,\\
C\eps^{\alpha\wedge(8-d)}, \quad&\text{if } d \ge 5.
\end{cases}
\end{equation}
\end{lemma}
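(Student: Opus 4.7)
The plan is to reproduce the derivation of Lemma \ref{l.mean.opL2} verbatim through the step of reducing the operator-norm bound to a one-variable integral of $R^\eps$, and then to replace the short-range estimate $\|R^\eps\|_{L^1}\le C\eps^d$ by the split argument used in the proof of Lemma \ref{lem:keyL2}, which exploits the heavy-tail asymptotic $|R^\eps(z)|\le C\eps^\alpha|z|^{-\alpha}$ for $|z|>T\eps$ provided by Lemma \ref{lem:tail}.

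First I would repeat the reduction: for any $h\in L^2(D)$ with $\|h\|_{L^2}\le 1$, Cauchy--Schwarz gives
\begin{equation*}
\|\cG_\eps\nu^\eps\cG_\eps h\|_{L^2}^2\le\int_{D^2}\left(\int_D G_\eps(x,y)\,\nu^\eps(y)\,G_\eps(y,z)\,dy\right)^{\!2} dz\,dx.
\end{equation*}
Taking expectation and using the definition of $R$, then integrating the $x$ and $z$ variables with \eqref{e.Gsing} and \eqref{e.intpot} exactly as in \eqref{e.bddsG}, I obtain
\begin{equation*}
\E\|\cG_\eps\nu^\eps\cG_\eps\|_{L^2\to L^2}^{2}\le C\int_{D^2}\left(\frac{1+\bone_{d=4}|\log|y-\eta||}{|y-\eta|^{(d-4)\vee 0}}\right)^{\!2}|R^\eps(y-\eta)|\,dy\,d\eta.
\end{equation*}
After the change of variables $z:=y-\eta$ and integrating out the free variable over $D$, the analysis reduces to bounding
\begin{equation*}
I^\eps:=\int_{B_\rho}\left(\frac{1+\bone_{d=4}|\log|z||}{|z|^{(d-4)\vee 0}}\right)^{\!2}|R^\eps(z)|\,dz.
\end{equation*}

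Next I would split $B_\rho=D_1\cup D_2$ as in \eqref{e.D.dec}: on $D_1=B_{T\eps}$ one uses $|R^\eps|\le M^2$, and on $D_2=B_\rho\setminus B_{T\eps}$ one uses $|R^\eps(z)|\le C\eps^\alpha|z|^{-\alpha}$ from Lemma \ref{lem:tail}. I would then evaluate $I^\eps$ case by case. For $d=2,3$ the Green-function kernel has no singularity: the $D_1$ contribution is $O(\eps^d)$ and the $D_2$ contribution is $O(\eps^\alpha)$ since $\int_{T\eps}^\rho r^{d-1-\alpha}\,dr$ is bounded when $\alpha\le d$, and together they yield $O(\eps^\alpha)$. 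For $d=4$ the extra squared logarithm forces $O(\eps^4|\log\eps|^2)$ on $D_1$ and $O(\eps^\alpha)$ on $D_2$ when $\alpha<4$, whereas the critical case $\alpha=4$ produces an additional logarithmic factor from the integral $\int_{T\eps}^\rho r^{-1}(1+|\log r|)^2\,dr$. For $d\ge 5$ the singularity $|z|^{-2(d-4)}$ makes $D_1$ contribute $\int_0^{T\eps}r^{7-d}\,dr=O(\eps^{8-d})$ (finite since $d\le 7$), while $D_2$ contributes $\eps^\alpha\int_{T\eps}^\rho r^{7-d-\alpha}\,dr$, which is $O(\eps^\alpha)$ when $d+\alpha<8$, carries a logarithmic factor when $d+\alpha=8$, and collapses to $O(\eps^{8-d})$ when $d+\alpha>8$; in every case the sum is dominated by $\eps^{\alpha\wedge(8-d)}$.

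The main obstacle is purely bookkeeping: as in Lemma \ref{lem:keyL2}, the critical dimension $d=4$ and the critical relation $d+\alpha=8$ produce logarithmic corrections that must be tracked carefully, and one must verify that the restrictions on $(d,\alpha)$ stated in Theorem \ref{t.size.l} (namely $\alpha\le d$ for $2\le d\le 5$ and $\alpha<6$ for $d=6,7$) make every intermediate integral finite. No new conceptual input beyond the split-at-scale-$T\eps$ technique already used for Lemma \ref{lem:keyL2} is required.
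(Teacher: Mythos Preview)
Your proposal is correct and follows exactly the paper's approach: the paper's proof consists of the two sentences ``Following the proof of Lemma~\ref{l.mean.opL2}, we have [the squared-kernel integral]. Proceeding as in the proof of Lemma~\ref{lem:keyL2}, we obtain \eqref{e.mean.opL2.l}.'' Your write-up simply unpacks those two references, supplying the case-by-case details that the paper leaves implicit.
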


\begin{proof} Following the proof of Lemma \ref{l.mean.opL2}, we have
\begin{equation*}
\E \|\cG_\eps \nu^\eps \cG_\eps\|^2_{L^2 \to L^2} \le C\int_{D^2} \left(\frac{1+\bone_{d=4}|\log|y-\eta||}{|y-\eta|^{(d-4)\vee0}} \right)^2 R\left(\frac{y-\eta}{\eps}\right) dy d\eta.
\end{equation*}
Proceeding as in the proof of Lemma \ref{lem:keyL2}, we obtain \eqref{e.mean.opL2.l}.
\end{proof}

In view of the relation \eqref{e.uvw}, we obtain immediately that, with $u^\eps$ and $v^\eps$ defined in \eqref{e.rpde} and \eqref{e.opde} respectively, $\E\,\|u^\eps - v^\eps\|^2_{L^2}$ satisfies exactly the same estimates in \eqref{eq:keyL2}. Combine this and the above result, we find that $\E\|\cG_\eps \nu^\eps \cG_\eps \nu^\eps (u^\eps - v^\eps)\|_{L^2}$ is of order $\eps^{\alpha}$ for $ d = 2,3,4$ and $\alpha < 4$, of order $\eps^{4}|\log\eps|^{\frac 3 2}$ if $\alpha = d = 4$, of order $\eps^{2+(\alpha\wedge(8-d))/2}$ for $d \ge 5$.

\begin{lemma} \label{l.energy2.l} Under the assumptions of Theorem \ref{t.size.l}, then there exists some constant $C$, depending only on the universal parameters, and
\begin{equation}\label{e.energy2.l}
\E \left\| \cG_\eps \nu^\eps \cG_\eps \nu^\eps v^\eps - \E  \cG_\eps \nu^\eps \cG_\eps \nu^\eps v^\eps \right\|^2_{L^2(D)} \ll \eps^\alpha.
\end{equation}
\end{lemma}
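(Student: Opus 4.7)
My approach mirrors the proof of Lemma \ref{l.energy2}, with the short-range bound on the fourth-order cumulant $\Psi_\nu$ replaced by a decomposition appropriate to $\nu = \Phi(g)$ with $g$ Gaussian. Writing $I^\eps_2 := \cG_\eps\nu^\eps\cG_\eps\nu^\eps v^\eps$, I first expand
\begin{equation*}
\E\|I^\eps_2-\E I^\eps_2\|^2_{L^2} = \int_{D^5} G_\eps(x,y) G_\eps(x,y') G_\eps(y,z) G_\eps(y',z') v^\eps(z) v^\eps(z')\, \Psi_\nu\!\left(\tfrac{y}{\eps},\tfrac{z}{\eps},\tfrac{y'}{\eps},\tfrac{z'}{\eps}\right) dz' dy' dz dy dx,
\end{equation*}
and integrate $x$ out using the uniform bound \eqref{e.bddsG}. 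This reduces the problem to estimating a four-fold integral weighted by the factor $(1+\bone_{d=4}|\log|y-y'||)|y-y'|^{-(d-4)\vee 0}$, the Green's function singularities $|y-z|^{-(d-2)}$ and $|y'-z'|^{-(d-2)}$ (with logarithmic replacements in $d=2$), and the cumulant $\Psi_\nu$.

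\textbf{Key ingredient.} I next need a Gaussian analogue of Lemma \ref{l.cumu}. Since $\Phi$ has Hermite rank one and its spectrum is controlled by (L3), a Hermite expansion of $\Phi(g)$ together with Wick's formula and the fourth-moment estimate from Lemma \ref{lem:fourth} should yield a pointwise bound
\begin{equation*}
|\Psi_\nu(x,y,t,s)| \le C\bigl(|R(x-t)R(y-s)| + |R(x-s)R(y-t)|\bigr),
\end{equation*}
where the reducible pairing $R(x-y)R(t-s)$ is cancelled by the centering and $R$ itself has the far-field decay $|R(x)| \le C(1\wedge |x|^{-\alpha})$ of Lemma \ref{lem:tail}. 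Substituting this bound into the integral above produces two pieces $J^\eps_{21}, J^\eps_{22}$ that are the long-range counterparts of $I^\eps_{21}, I^\eps_{22}$ in the proof of Lemma \ref{l.energy2}.

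\textbf{Finishing and main obstacle.} For each of $J^\eps_{21}, J^\eps_{22}$ I would then carry out the same changes of variables as in the short-range proof, so as to separate the deterministic Green's function structure from the two rescaled correlation factors. Each $R(\cdot/\eps)$ is estimated by the near/far splitting \eqref{e.D.dec} used in Lemma \ref{lem:keyL2}: on the inner region $\{|z|\le T\eps\}$ it is bounded by $M^2$, and on the outer region by $C\eps^\alpha|z|^{-\alpha}$. Each factor thus contributes a gain of $\eps^{\alpha\wedge 4}$, with an additional $|\log\eps|$ in the borderline case $d = \alpha = 4$, so that the overall bound is $O(\eps^{2(\alpha\wedge 4)})$ up to logarithmic factors, which is indeed $\ll \eps^\alpha$ for $\alpha \in (0,d]$ and $2\le d\le 7$. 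The main difficulty will be the crossed pairing $J^\eps_{22}$: the correlation arguments link a Green's function source variable with the other's target variable, so the integral cannot be separated in an elementary fashion. As in the short-range setting this forces an ad hoc change of variables followed by a Hardy--Littlewood--Sobolev bound in dimensions $d\ge 5$ and a delicate tracking of logarithmic factors when $d=4$; the slower far-field decay of $R$ compared to the short-range case simply shifts the gain from $\eps^d$ to $\eps^\alpha$ per correlation factor, but does not alter the structure of the estimates.
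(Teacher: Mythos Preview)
Your plan follows the paper's overall scaffold, but the ``key ingredient'' step contains a genuine gap. The two-term bound
\[
|\Psi_\nu(x,y,t,s)| \le C\bigl(|R(x-t)R(y-s)| + |R(x-s)R(y-t)|\bigr)
\]
is \emph{not} what Lemma~\ref{lem:fourth} gives, and it is false for long-range Gaussian-subordinated fields. That lemma says $\E\prod_i\nu(x_i)$ equals the three Wick pairings $\sum_{\mathcal{U}_*}RR$ up to an error bounded by $\sum_{p\in\mathcal{U}^*}\vartheta\vartheta$, where $\mathcal{U}^*$ consists of the twelve pairs of difference vectors sharing a common index (e.g.\ $\vartheta(x_1-x_2)\vartheta(x_1-x_3)$). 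After subtracting the centering pairing you are left with fourteen terms, not two. The twelve $\mathcal{U}^*$ terms cannot be absorbed into your two off-diagonal Wick terms: take $x_4$ far from a cluster $\{x_1,x_2,x_3\}$; then $\vartheta(x_1-x_2)\vartheta(x_1-x_3)$ is of order one while both of your bounding products contain a factor $R(\cdot-x_4)$ and are small. In the short-range proof the two-term bound is a consequence of $\rho$-mixing (Lemma~\ref{l.cumu}), which genuinely fails here.

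What the paper actually does is keep all fourteen terms and sort them into three categories according to how the pair of correlation difference vectors interacts with the Green's function vectors $y-z$ and $y'-z'$: (1) neither correlation vector coincides with a Green's function vector and they are ``independent'' of them; (2) one correlation vector coincides with a Green's function vector; (3) one Green's function vector lies in the span of the two correlation vectors. Each category requires its own change of variables, after which the integrals are handled with Hardy--Littlewood--Sobolev or refined Young inequalities together with the $L^p$ bounds on $R^\eps$ from Lemma~\ref{lem:RLp}. Your proposed $J^\eps_{21},J^\eps_{22}$ correspond only to two Category~1 terms; Categories~2 and~3 are entirely missing from your outline, and their analysis (especially Category~2, where an $R^\eps$ factor sits on top of a Green's function singularity $|y-z|^{-(d-2)}$) is not a straightforward repetition of the Category~1 argument.
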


\begin{proof} The expression for the quantity $I^\eps_2 := \E \left\| \cG_\eps \nu^\eps \cG_\eps \nu^\eps v^\eps - \E  \cG_\eps \nu^\eps \cG_\eps \nu^\eps v^\eps \right\|^2_{L^2(D)}$ is found below \eqref{e.energy2}. Integrate over $x$ and apply the estimates \eqref{e.Gsing}, \eqref{e.bddsG} and \eqref{eq:cumu}. We find, for $d\ge 3$, $I^\eps_2$ can be bounded by
\begin{equation*}
\begin{aligned}
\sum_{p \ne \{(1,2),(3,4)\}} \int_{D^4} & \frac{C (1+\bone_{d=4}|\log|y-y'||)|v^\eps(z) v^\eps(z')|}{|y-y'|^{(d-4)\vee 0} |y-z|^{d-2} |y'-z'|^{d-2}}|R_\eps(x_{p(1)}-x_{p(2)}) 
R_\eps(x_{p(3)} - x_{p(4)})| dz dy' dz dy,
\end{aligned}
\end{equation*}
where the ordered set $\{x_1,x_2,x_3,x_4\}$ refers to $\{y,z,y',z'\}$, and $p$ runs over the set $\mathcal{U}$ defined in \eqref{eq:Udef} below, and $p \ne \{(1,2),(3,4)\}$. When $d=2$, the potential terms $|y-z|^{2-d}$ and $|y'-z'|^{2-d}$, which result from an application of the bounds \eqref{e.Gsing} on the Green's functions, should be replaced by $|\log|y-z||$ and $|\log|y'-z'||$. In the summation above, there are 14 terms and each of them contains a product of two $R^\eps$ functions evaluated at the difference vectors between a pair of points in $\{y, z, y',z'\}$; more 
importantly, at most one of the difference vectors is in $\{y-z, y'-z'\}$, in which case that $R^\eps$ shares the same point of evaluation with one of the bounds on the Green's functions.
\begin{figure}[ht]
\center{
\caption{\label{fig:points} Difference vectors between four points.}
\setlength{\unitlength}{0.45 cm}
\begin{picture}(10,10)
\put(1,5){\line(1,0){8}}
\put(4,1){\line(1,4){2}}
\put(1,5){\circle*{0.2}}
\put(9,5){\circle*{0.2}}
\put(4,1){\circle*{0.2}}
\put(6,9){\circle*{0.2}}
\put(0.5,4.5){$y$}
\put(9.5,5.5){$z$}
\put(3,1){$y'$}
\put(6.5,9){$z'$}
\multiput(1,5)(1,-1.333){3}{\line(3,-4){0.8}}
\multiput(6,9)(1,-1.333){3}{\line(3,-4){0.8}}
\end{picture}
\begin{picture}(10,10)
\put(1,5){\line(1,0){8}}
\put(4,1){\line(1,4){2}}
\put(1,5){\circle*{0.2}}
\put(9,5){\circle*{0.2}}
\put(4,1){\circle*{0.2}}
\put(6,9){\circle*{0.2}}
\put(0.5,4.5){$y$}
\put(9.5,5.5){$z$}
\put(3,1){$y'$}
\put(6.5,9){$z'$}
\multiput(1,5)(1,-1.333){3}{\line(3,-4){0.8}}
\multiput(1.2,4.9)(1,0){8}{\line(1,0){0.8}}
\end{picture}
\begin{picture}(10,10)
\put(1,5){\line(1,0){8}}
\put(4,1){\line(1,4){2}}
\put(1,5){\circle*{0.2}}
\put(9,5){\circle*{0.2}}
\put(4,1){\circle*{0.2}}
\put(6,9){\circle*{0.2}}
\put(0.5,4.5){$y$}
\put(9.5,5.5){$z$}
\put(3,1){$y'$}
\put(6.5,9){$z'$}
\multiput(1,5)(1,-1.333){3}{\line(3,-4){0.8}}
\multiput(1,5)(1,0.8){5}{\line(5,4){0.8}}
\end{picture}
}
\end{figure}
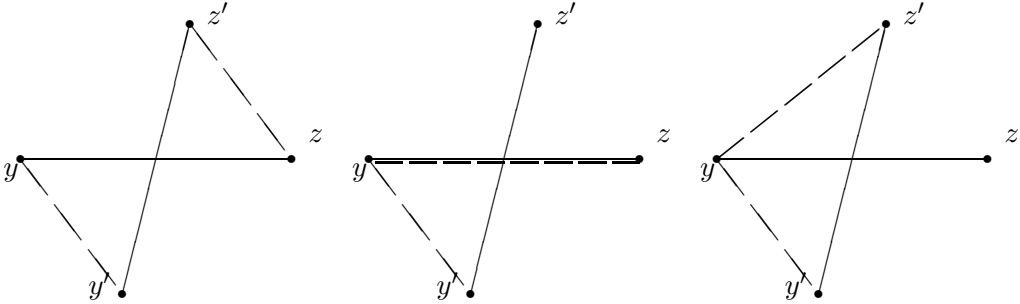

We can divide the fourteen choices of $p$ into three categories as shown in 
Figure \ref{fig:points}. In the first category, the two vectors in the correlation functions are linear 
independent with both of the vectors in the Green's functions; in the 
second category, one of the Green's function shares the same argument with 
one of the correlation function; finally in the third category, the vector 
in one of the Green's function is a linear combination of the two vectors 
of the correlation functions.

\medskip

{\it Case 1}. In the first category, we consider a typical term of the form:
\begin{equation}
I^\eps_{21} = \int_{D^4} \frac{C(1+\bone_{d=4}|\log|y-y'||) |v^\eps(z) v^\eps(z')|}{|y-y'|^{(d-4)\vee0} |y-z|^{d-2} |y'-z'|^{d-2}} 
|R^\eps(y-y') R^\eps(z-z')| \, dz' dy' dz dy.
\label{eq:J_1def}
\end{equation}
Change variable as follows:
\begin{equation*}
y - y' \mapsto y, \quad z- y' \mapsto y', \quad z - z' \mapsto z, \quad  z' \mapsto z'.
\end{equation*}
The integral can be bounded by
\begin{equation*}
\int_{B_{\rho}^3} dy' dz dy \int_D dz' \frac{C(1+\bone_{d=4}|\log|y||)|v^\eps(z')v^\eps(z' + z) R^\eps(y) R^\eps(z)|}{|y|^{(d-4)\vee0} |y'- y|^{d-2} |y' - z|^{d-2}}.
\end{equation*}
The integration over $z'$ yields a bound by $\|v^\eps\|_{L^2}^2$. Then an integration over $y'$ and an application of \eqref{e.intpot} yield
\begin{equation}
I^\eps_{21} \le C\|v^\eps\|_{L^2}^2 \int_{B_\rho^2} \frac{(1+\bone_{d=4}|\log|y||)(1+\bone_{d=4}|\log|y-z||) |R^\eps(y) R^\eps(z)|}{|y|^{(d-4)\vee0} |y-z|^{(d-4)\vee0}}\, dz dy.
\label{eq:J_1est}
\end{equation}
This integral is the same as in the estimate of $I^\eps_{21}$ in the proof of Lemma 4.2. When $d=3$, the above is essentially the square of the integral of $R^\eps$, and is of order $\eps^{2\alpha}$ if $\alpha<d$ and of order $\eps^{2d}|\log\eps|^2$ if $\alpha=d$. When $d=2$, the integration over $y'$ earlier eliminates the mild singularity posed by the logarithmic functions, and the same conclusion as $d=3$ hold.

When $d\ge 5$, applying the Hardy-Littlewood-Sobolev inequality again, we have for some $1 < p,r < \infty$
\begin{equation}
\int_{B_\rho^2} \frac{\left| R^\eps(y)/|y|^{d-4}\right| |R^\eps(z)|}{|y-z|^{d-4}} \,dzdy \le C\left\|\frac{R^\eps}{|y|^{d-4}}\right\|_{L^p(B_\rho)} \|R^\eps\|_{L^r(B_\rho)}, \quad \quad \frac{1}{p} + \frac{d-4}{d} + \frac{1}{r} = 2.
\label{eq:Hardy1}
\end{equation}
If $0 < \alpha < d$, we can take $p = \frac{d}{d+4-\alpha-\tau}$, $r = \frac{d}{\alpha+\tau}$ for some $(4-\alpha)\vee0 < \tau < d-\alpha$. These choices satisfy $\alpha r < d$ and $p<\frac{d}{d-4}$. Hence, according to Lemma \ref{eq:RLp}, $\|R^\eps\|_{L^r}$ are bounded by $C\eps^{\alpha}$ and $\|R^\eps |y|^{-d+4}\|_{L^p}$ is of order $o(1)$. If $5 \le \alpha = d \le 7$, we can choose $p = \frac{d}{d-\tau}$ and $r = \frac{d}{4+\tau}$ with some $0 < \tau <4$. Then we check that $\|R^\eps |y|^{-d+4}\|_{L^p} \|R^\eps\|_{L^r} \le C\eps^8 \ll \eps^\alpha$. 

When $d=4$, we note that the function $K(x) := 1 + \bone_{B_{2\rho}}(x)|\log|x||$ belongs to the weak Lebesgue space $L^q_w(\R^d)$ for all $q \in (1,\infty)$. Apply the refined Young's inequality \cite[Theorem 1.5]{Chemin}, we have
\begin{equation}
\int_{B^2_\rho} |R^\eps(y)K(y) K(y-z)R^\eps(z)| dz dy \le C\|R^\eps K\|_{L^p(B_\rho)}\|R^\eps\|_{L^r(B_\rho)}, \quad \quad \frac{1}{p} + \frac{1}{r} < 2.
\label{eq:Young1}
\end{equation}
When $\alpha < 4$, we can choose $p = r = \frac{4}{\alpha+\tau}$ with $0<\tau<4-\alpha$. Then by  \eqref{eq:RLp.4} and \eqref{eq:RLp}, $\|R^\eps K\|_{L^p}\|R^\eps\|_{L^r}$ is of order $\eps^{2\alpha}$. When $\alpha = d =4$, we can choose $p = r = \frac{4}{4-\tau}$ with $0< \tau < 2$. Then $\|R^\eps K\|_{L^p}\|R^\eps\|_{L^r}$ is of order $\eps^{8-2\tau}|\log \eps| \ll \eps^\alpha$.

\medskip

{\it Case 2}. In the second category, we consider a typical term of the form:
\begin{equation}
I^\eps_{22} = \int_{D^4} \frac{C(1+\bone_{d=4}|\log|y-y'||) |v^\eps(z) v^\eps(z')|}{|y-y'|^{(d-4)\vee0} |y-z|^{d-2} |y'-z'|^{d-2}} 
|R^\eps(y-y') R^\eps(y-z)| \, dz' dy' dz dy.
\label{eq:J_2def}
\end{equation}
This time we use the following change of variables,
\begin{equation*}
y - y' \mapsto y, \quad y' - z' \mapsto y', \quad y - z \mapsto z, \quad  z' \mapsto z'.
\end{equation*}
The integral can then be bounded by
\begin{equation*}
\int_{B_{\rho}^3} dy' dz dy \int_D dz' \frac{C(1+\bone_{d=4}|\log|y||)|v^\eps(z')v^\eps(z' + y + y' - z)| |R^\eps(y) R^\eps(z)|}{|y|^{(d-4)\vee0} |z|^{d-2} |y'|^{d-2}}.
\end{equation*}
Integrate over $z'$ first and then integrate over $y'$. Then we get
\begin{equation}
I^\eps_{22} \le C\|v^\eps\|_{L^2}^2 \int_{B^2_\rho} 
\frac{(1+\bone_{d=4}|\log|y||)|R^\eps(y) R^\eps(z)|}{|y|^{(d-4)\vee0} |z|^{d-2}} dzdy.
\label{eq:J_2est}
\end{equation}

When $d=3$, the integral over $y$ yields a term of order $\eps^\alpha$ if $\alpha < 3$, and of order $\eps^3|\log \eps|$ if $\alpha = 3$; the integral over $z$ is of order $\eps^\alpha$ if $\alpha < 2$, of order $\eps^2|\log\eps|$ if $\alpha = 2$, and of order $\eps^2$ if $2 < \alpha \le 3$. In all cases, $I^\eps_{22} \ll \eps^\alpha$ for $d = 3$. When $d = 2$, the singularity $|z|^{-d+2}$ is replaced by a logarithmic singularity, and the conclusion $I^\eps_{22} \ll \eps^\alpha$ holds. 

When $5 \le d \le 7$, if $\alpha < 4$, then the integral over $y$ is of order $\eps^\alpha$; that over $z$ is of order $\eps^{2\wedge \alpha}$ if $\alpha \ne 2$ and of order $\eps^2|\log \eps|$ if $\alpha = 2$. If $\alpha = 4$, the integral over $y$ is of order $\eps^4|\log\eps|$ and that over $z$ is of order $\eps^2$. If $\alpha > 4$, the integral over $y$ is of order $\eps^4$ and that over $z$ is of order $\eps^2$. We conclude that if $\alpha < 6$, then $I^\eps_{22} \ll \eps^\alpha$.

When $d = 4$, the integral over $y$ is of order $\eps^\alpha$ if $\alpha < 4$, and of order $\eps^4|\log \eps|$ if $\alpha = 4$. The integral over $z$ is of order $\eps^\alpha$ if $\alpha < 2$, of order $\eps^2|\log \eps|$ if $\alpha = 2$, and of order $\eps^2$ if $\alpha = 2$. In any case, $I^\eps_{22} \ll \eps^\alpha$.

\medskip

{\it Case 3}. For the third category, we consider a typical term of the form:
\begin{equation}
I^\eps_{23} = \int_{D^4} \frac{C(1+\bone_{d=4}|\log|y-y'||) |v^\eps(z) v^\eps(z')|}{|y-y'|^{(d-4)\vee0} |y-z|^{d-2} |y'-z'|^{d-2}} 
|R^\eps(y-y') R^\eps(y-z')| \, dz' dy' dz dy.
\label{eq:J_3def}
\end{equation}
Change variables according to
\begin{equation*}
y - y' \mapsto y, \quad y'  \mapsto y', \quad y - z \mapsto z, \quad  y - z' \mapsto z'.
\end{equation*}
The integral can then be bounded by
\begin{equation*}
\int_{B_{\rho}^3} dy' dz' dz \int_D dy' \frac{C(1+\bone_{d=4}|\log|y||)|v^\eps(y-z)v^\eps(y'+y-z')| |R^\eps(y) R^\eps(z')|}{|y|^{(d-4)\vee0} |z|^{d-2} |y-z'|^{d-2}}.
\end{equation*}
Integrate over $y'$ and $z$ and apply the $\|v^\eps\|_{L^\infty}$ bounds; then we are left to control
\begin{equation}
I^\eps_{23} \le C\|v^\eps\|_{L^\infty}^2 \int_{B_\rho^2} 
\frac{(1+\bone_{d=4}|\log|y||) |R^\eps(y) R^\eps(z')|}{|y|^{(d-4)\vee 0} |y-z'|^{d-2}}  dz'dy.
\label{eq:J_3est}
\end{equation}
This term can be estimated in a similar way as what we have done for 
\eqref{eq:J_1est}. The difference is that, in the application of Hardy-Littlewood-Sobolev inequality, the factor $(d-4)/d$ should be replaced by $(d-2)/d$. In particular, for $d \ge 3$, we have, for $p,r \in (1,\infty)$,
\begin{equation*}
I^\eps_{23} \le C\left \| \frac{(1+\bone_{d=4}|\log|y||)R^\eps}{|y|^{(d-4)\vee0}}\right\|_{L^p(B_\rho)} \|R^\eps\|_{L^r(B_\rho)}, \quad \quad \frac{1}{p} + \frac{d-2}{d} + \frac{1}{r} = 2.
\end{equation*}

When $d = 3$, set $p = r = \frac{6}{5}$. Then the above is of order $\eps^{2\alpha}$ if $\alpha < \frac{5}{2}$, of order $\eps^{5}|\log\eps|^{\frac 5 3}$ if $\alpha = \frac{5}{2}$, and of order $\eps^{5}$ if $\frac{5}{2} < \alpha \le 3$. In all cases, $I^\eps_{23} \ll \eps^\alpha$.

When $5 \le d \le 7$ and $0 < \alpha < 6$, we choose $p = \frac{d}{d+2-\alpha-\tau}$ and $r = \frac{d}{\alpha+\tau}$ for some $0 < \tau < 6-\alpha$, and check that $r\alpha < d$ and $p < \frac{d}{d-4}$. This implies that $\|R^\eps\|_{L^r} \le C\eps^\alpha$, and $\|R^\eps|y|^{-d+4}\|_{L^p}$ is of order $o(1)$. When $\alpha \ge 6$, we are only able to choose $p,r$ satisfying the above constraint, such that $\alpha r > d$ and $(d-2+\alpha)p > d$ and $I^\eps_{23} \le C\eps^6$.

When $d = 4$, choose $p = r = \frac{4}{3}$. If $\alpha < 3$, then $\alpha p = \alpha r < 4$ and both $\|(1+|\log|y||)R^\eps(y)\|_{L^p}$ and $\|R^\eps\|_{L^r}$ are of order $\eps^{\alpha}$, and the above quantity is bounded by $C\eps^{2\alpha}$. If $\alpha = 3$, the above is of order $\eps^{2\alpha}|\log\eps|^{\frac 5 2}$. If $3 < \alpha \le 4$, the above is of order $\eps^6|\log\eps|$. In all cases, we have $I^\eps_{23} \ll \eps^{\alpha}$.

When $d = 2$, the singular term in \eqref{eq:J_3est} is $K(y-z')$ with $K(x) := \bone_{B_{2\rho}}(x)|\log|x||$. Note $K \in L^q_w(\R^d)$ for all $q \in (1,\infty)$, and hence $I^\eps_{23} \le \|R^\eps\|_{L^p}\|R^\eps\|_{L^r}$ with any $p, r \in (1,\infty)$ satisfying $p^{-1} + r^{-1} < 2$. Set $p = r > 1$ so that $p<2d/\alpha$, we find that $ \|R^\eps\|_{L^p}\|R^\eps\|_{L^r} \le C\eps^{2d/p} \ll \eps^\alpha$.

\medskip

Combining all of the results above, we finish the proof of the lemma.
\end{proof}

In view of the discussion at the beginning of this subsection, we proved \eqref{e.t.size.f.l}.
\subsection{Scaling factor of the random fluctuations in weak topology}
\label{s.size.fw}

In this section, we prove the second part of Theorem \ref{t.size.l}, which characterizes how does the random fluctuation $u^\eps - \E\,u^\eps$ scale when it is integrated against test functions.

The approach is the same as in section \ref{s.wt}. We fix an arbitrary $\varphi \in L^2(D)$ with unit norm, and start with the expression \eqref{e.fw} for $(u^\eps - \E\,u^\eps,\varphi)$. Let $I^\eps_j$, $j=1,2,3$, be defined as in \eqref{e.fw}. Following the analysis in section \ref{s.wt}, we get $\E\,(I^\eps_1)^2 \le C\|R^\eps\|_{L^1(B_\rho)}$, which is of order $\eps^\alpha$ in view of $\alpha < d$ and \eqref{eq:RLp}. In the estimate of $\Var\,(I^\eps_2)$, \eqref{e.cumu} must be replaced by \eqref{eq:cumu}, so the analysis is more involved. We provide the details and allow $\alpha = d$ in the analysis of $\Var\,(I^\eps_2)$.

\medskip

The expression for the variance of $I^\eps_2$ is, for $d \ge 3$,
\begin{equation*}
\begin{aligned}
& \Var\,(I^\eps_2) = \int_{D^4} \Psi_\nu\left(\frac{x}{\eps},\frac{y}{\eps},\frac{x'}{\eps},\frac{y'}{\eps}\right) G_\eps(x,y) G_\eps(x',y') v^\eps(y)v^\eps(y')\psi^\eps(x)\psi^\eps(x') dx' dy' dx dy\\
\le \ & \sum_{p \ne \{(1,2),(3,4)\}} \int_{D^4} \frac{C|v^\eps(y) v^\eps(z') \psi^\eps(x) \psi^\eps(x')|}{|x-y|^{d-2} |x'-y'|^{d-2}}|R_\eps(x_{p(1)}-x_{p(2)}) 
R_\eps(x_{p(3)} - x_{p(4)})| dy' dx' dy dx,
\end{aligned}
\end{equation*}
where in the summation $p$ runs over $\mathcal{U} \setminus \{(1,2),(3,4)\}$, and the ordered set $\{x_1,x_2,x_3,x_4\}$ refers to $\{x,y,x',y'\}$. For $d = 2$, the above bounds $|x-y|^{-d+2}$ and $|x'-y'|^{-d+2}$ should be replaced by $1+ |\log|x-y||$ and $1 + |\log|x'-y'||$. These integrals in the sum can be estimated in the same way as those of in the proof of Lemma \ref{l.energy2.l}, by considering three categories of $p$. We note that the integrals here are less singular than before because the absence of $|y-y'|^{-(d-4)}$ term.

\medskip

{\it Case 1}. In the first category, we consider a typical term of the form:
\begin{equation}
I^\eps_{21} = \int_{D^4} \frac{|v^\eps(y) v^\eps(y') \psi^\eps(x) \psi^\eps(x')|}{|x-y|^{d-2} |x'-y'|^{d-2}} 
|R^\eps(x-x') R^\eps(y-y')| \, dy' dx' dy dx.
\label{eq:I_1def}
\end{equation}
Change variable as follows:
\begin{equation*}
x - x' \mapsto x, \quad y - x' \mapsto x', \quad y - y' \mapsto y, \quad  y' \mapsto y'.
\end{equation*}
The integral can be bounded by
\begin{equation*}
\int_{B_{\rho}^3} dx' dy dx \int_D dy' \frac{C|v^\eps(y')v^\eps(y' + y) \psi^\eps(y'+y-x')\psi^\eps(y'+y+x-x') R^\eps(x) R^\eps(y)|}{|x'- x|^{d-2} |x' - y|^{d-2}}.
\end{equation*}
The integration over $y'$ yields a bound by $C\|v^\eps\|_{L^4}^2\|\psi^\eps\|_{L^4}^2$. Then an integration over $x'$ and an application of \eqref{e.intpot} yield
\begin{equation}
I^\eps_{21} \le C\int_{B_\rho^2} \frac{(1+\bone_{d=4}|\log|x-y||) |R^\eps(x) R^\eps(y)|}{|x-y|^{(d-4)\vee0}}\, dy dx.
\label{eq:I_1est}
\end{equation}

When $d=3$, the above is essentially the square of the integral of $R^\eps$, which is of order $\eps^{2\alpha}$ if $\alpha<d$ and of order $\eps^{2\alpha}|\log\eps|^2$ if $\alpha = d$. Hence, $I^\eps_{21} \ll \eps^\alpha$. When $d=2$, the integration over $x'$ earlier eliminates the mild singularity of logarithmic functions, and the same conclusion holds.

When $d\ge 5$, applying the Hardy-Littlewood-Sobolev inequality, we have for some $1 < p,r < \infty$
\begin{equation}
\int_{B_\rho^2} \frac{|R^\eps(x) R^\eps(y)|}{|x-y|^{d-4}} \,dydx \le C\left\|R^\eps\right\|_{L^p(B_\rho)} \|R^\eps\|_{L^r(B_\rho)}, \quad \quad \frac{1}{p} + \frac{d-4}{d} + \frac{1}{r} = 2.
\label{eq:Hardy1}
\end{equation}
If $0 < \alpha < d$, we can choose $p = \frac{d}{d+4-\alpha-\tau}$, $r = \frac{d}{\alpha+\tau}$ for some $(4-\alpha)\vee0 < \tau < d-\alpha$. Then $\alpha r < d$ and hence $\|R^\eps\|_{L^r} \le C\eps^\alpha$. On the other hand, $\|R^\eps\|_{L^p}$ is of order $o(1)$. If $\alpha = d$, we choose $p = \frac{d}{4+\tau}$ and $r = \frac{d}{d-\tau}$, with $0<\tau<d-4$. Then $\alpha r > d$ and $\alpha p > d $. It follows that $\|R^\eps\|_{L^p} \|R^\eps\|_{L^r} \le C\eps^{d(p^{-1}+r^{-1})}$ which is of order $\eps^{d+4}$. In all cases, we have $I^\eps_{21} \ll \eps^\alpha$.

When $d=4$, the $|x-y|^{-d+4}$ above is replaced by $K(x-y)$ with $K := 1 + \bone_{B_{2\rho}}(x)|\log|x||$, and $K$ belongs to $L^q_w(\R^d)$ for all $q \in (1,\infty)$. By refined Young's inequality, we have
\begin{equation}
\int_{B^2_\rho} \left| R^\eps(y)K(y-z)R^\eps(z)\right| dz dy \le C\|R^\eps\|_{L^p(B_\rho)}\|R^\eps\|_{L^r(B_\rho)}, \quad \quad \frac{1}{p} + \frac{1}{r} < 2.
\label{eq:Young1}
\end{equation}
If $\alpha < 4$, we can choose $p = r = \frac{4}{\alpha+\tau}$ with $0<\tau<4-\alpha$. Then $r\alpha< d$ and $\|R^\eps\|_{L^p}\|R^\eps\|_{L^r}$ is of order $\eps^{2\alpha}$. When $\alpha =4$, we can choose $p = r = \frac{4}{4-\tau}$ with $0< \tau < 2$. Then $\|R^\eps\|_{L^p}\|R^\eps\|_{L^r}$ is of order $\eps^{8-2\tau}$. In all cases, $I^\eps_{21} \ll \eps^\alpha$.

\medskip

{\it Case 2}. In the second category, we consider a typical term of the form:
\begin{equation}
I^\eps_{22} = \int_{D^4} \frac{C |v^\eps(y) v^\eps(y') \psi^\eps(x) \psi^\eps(x')|}{ |x-y|^{d-2} |x'-y'|^{d-2}} 
|R^\eps(x-x') R^\eps(x-y)| \, dy' dx' dy dx.
\label{eq:I_2def}
\end{equation}
This time we use the following change of variables,
\begin{equation*}
x - y \mapsto x, \quad x - x' \mapsto x', \quad y \mapsto y, \quad  x' - y' \mapsto y'.
\end{equation*}
The integral can then be bounded by
\begin{equation*}
\int_{B_{\rho}^3} dy' dx' dx \int_D dy \frac{C|v^\eps(y)v^\eps(x+y-x'-y') \psi^\eps(y+x) \psi^\eps(x+y-x') R^\eps(x) R^\eps(x')|}{|x|^{d-2} |y'|^{d-2}}.
\end{equation*}
Integrate over $y$ first and then integrate over $y'$. Then we get
\begin{equation}
I^\eps_{22} \le C\|v^\eps\|_{L^\infty}^2 \|\psi^\eps\|_{L^2}^2 \int_{B^2_\rho} 
\frac{|R^\eps(x) R^\eps(x')|}{|x|^{d-2}} dx' dx.
\label{eq:I_2est}
\end{equation}

The integrals over $x'$ and $x$ are separated. When $d \ge 3$ and $\alpha < d$, the integral over $x'$ is of order $\eps^\alpha$, and that over $x$ is always of order $o(1)$. Hence $I^\eps_{22} \ll \eps^\alpha$. When $\alpha = d$, the integral over $x'$ is of order $\eps^d|\log\eps|$, and that over $x$ is of order at most $\eps^2$. In all cases, $I^\eps_{22} \ll \eps^\alpha$.

When $d = 2$, the above quantity amounts to $\|R^\eps|\log|x||\|_{L^1(B_\rho)} \|R^\eps\|_{L^1(B_\rho)}$ which is of order $\eps^{2\alpha}$ if $\alpha < 2$ and of order $\eps^{4}|\log\eps|^3$. In all cases, $I^\eps_{22} \ll \eps^{\alpha}$.

\medskip

{\it Case 3}. For the third category, we consider a typical term of the form:
\begin{equation}
I^\eps_{23} = \int_{D^4} \frac{C|v^\eps(y) v^\eps(y') \psi^\eps(x) \psi^\eps(x')|}{|x - y|^{d-2} |x'-y'|^{d-2}} 
|R^\eps(x-x') R^\eps(x-y')| \, dy' dx' dy dx.
\label{eq:I_3def}
\end{equation}
Change variables according to
\begin{equation*}
x \mapsto x, \quad x - x'  \mapsto x', \quad x - y  \mapsto y, \quad  x - y' \mapsto y'.
\end{equation*}
The integral can then be bounded by
\begin{equation*}
\int_{B_{\rho}^3} dy' dx' dy \int_D dx \frac{C|v^\eps(x-y)v^\eps(x-y') \psi^\eps(x) \psi^\eps(x-x') R^\eps(x') R^\eps(y')|}{|y|^{d-2} |x'-y'|^{d-2}}.
\end{equation*}
Integrate over $x$ first and then over $y$; we obtain
\begin{equation}
I^\eps_{23} \le C\|v^\eps\|_{L^\infty}^2 \|\psi^\eps\|_{L^2}^2 \int_{B_\rho^2} 
\frac{|R^\eps(x') R^\eps(y')|}{|x'-y'|^{d-2}}  dx' dy'.
\label{eq:I_3est}
\end{equation}
Applying Hardy-Littlewood-Sobolev inequality again to the integral above, we have, for $d \ge 3$ and for $p,r \in (1,\infty)$,
\begin{equation*}
I^\eps_{23} \le C\left \|R^\eps \right\|_{L^p(B_\rho)} \|R^\eps\|_{L^r(B_\rho)}, \quad\quad \frac{1}{p} + \frac{d-2}{d} + \frac{1}{r} = 2.
\end{equation*}

When $d \ge 3$ and $0 < \alpha < d$, set $p = \frac{d}{d+2-\alpha-\tau}$ and $r = \frac{d}{\alpha+\tau}$ with $(2-\alpha)\vee 0 < \tau < d-\alpha$. Then $\alpha r < d$ and $p > 1$. Hence $\|R^\eps\|_{L^r}$ is of order $\eps^\alpha$ and $\|R^\eps\|_{L^p}$ is of order $o(1)$. If $\alpha = d$, then set $r = \frac{d}{d-\tau}$ and $p = \frac{d}{2+\tau}$ with $0< \tau < d-2$. Then $r\alpha > d$ and $p \alpha > d$, and hence $\|R^\eps\|_{L^p}\|R^\eps\|_{L^r} \le C\eps^{d(p^{-1}+r^{-1})} = C\eps^{d+2}$. In all case, $I^\eps_{23} \ll \eps^\alpha$.

When $d = 2$, the singular term in \eqref{eq:I_3est} is $K(x'-y')$ with $K(x) = \bone_{B_{2\rho}}(x)|\log|x||$. Note that $K \in L^q_w(\R^d)$ for all $q \in (1,\infty)$, and hence $I^\eps_{23} \le \|R^\eps\|_{L^p}\|R^\eps\|_{L^r}$ for any $p, r \in (1,\infty)$ satisfying $p^{-1} + r^{-1} < 2$. Set $p = r > 1$ so that $p<2d/\alpha$, we find that $ \|R^\eps\|_{L^p}\|R^\eps\|_{L^r} \le C\eps^{2d/p} \ll \eps^\alpha$.

\medskip

Combining all of the results above, we proved that there exists some constant $C$ that depends only on the universal parameters and the data $\varphi$, such that for any $d \ge 2$, $0 < \alpha \le d$, 
\begin{equation*}
\Var\, (I^\eps_2) \ll C \eps^\alpha.
\end{equation*}
Finally, for the truncation term $I^\eps_3$, we have $\E\,|I^\eps_3| \le C\left(\E\|u^\eps - v^\eps\|^2_{L^2} \,\E\|\cG_\eps \nu^\eps \psi^\eps\|_{L^2}^2\right)^{\frac 1 2}$ which, in view of \eqref{eq:keyL2}, is of order $\eps^{\alpha\wedge 4}$ or $\eps^4|\log \eps|$, much smaller than $\eps^{\frac \alpha 2}$ for all $d \le 7$.

We hence conclude with \eqref{e.t.size.fw.l}. In particular, for $2 \le d \le 7$, the leading term in the series expansion \eqref{e.Neumann} is dominating in the weak topology, and it scales like $\eps^{\frac \alpha 2}$.

\subsection{Limiting distribution of $u^\eps - \E\,u^\eps$}

We multiply the series expansion \eqref{e.Neumann} with the proper scaling factor of the long range correlation setting, and get the following expression for the scaled random fluctuation $\eps^{-\frac \alpha 2} \left(u^\eps - \E\,u^\eps\right)$:
\begin{equation}\label{e.expansion.l}
- \frac{\cG_\eps \nu_\eps v^\eps}{\sqrt{\eps^\alpha}} + \frac{\left(\cG_\eps \nu_\eps \cG_\eps \nu_\eps v^\eps - \E \cG_\eps \nu_\eps \cG_\eps \nu_\eps v^\eps \right)}{\sqrt{\eps^\alpha}} + \frac{\left(\cG_\eps \nu_\eps \cG_\eps \nu_\eps (u^\eps - v^\eps) - \E \cG_\eps \nu_\eps \cG_\eps \nu_\eps (u^\eps - v^\eps)\right)}{\sqrt{\eps^\alpha}}.
\end{equation}
Following the approach of section \ref{s.cdist}, when $2 \le d \le 5$ and $0 < \alpha < d$, the first term above dominates and this term alone contributes to the limiting distribution in $L^2(D)$ for $d = 2,3$, and in $H^{-1}(D)$ for $d = 4,5$. We hence set $X^\eps = - \eps^{-\frac \alpha 2} \cG_\eps \nu^\eps v^\eps$.

\subsubsection*{Tightness of the probability measures of $\{X^\eps\}$}

In view of Remark \ref{r.tight}, the tightness of the probability measures induced by $\{X^\eps\}$ on $L^2(D)$, when $d = 2,3$, follows from Lemma \ref{l.Hs}, with $\eps^{-\frac d 2}$ replaced by $\eps^{-\frac \alpha 2}$. Similarly, the tightness of the probability measures induced by $\{X^\eps\}$ on $H^{-1}(D)$, when $d = 4,5$, follows from Lemma \ref{l.H-s}.

\subsubsection*{Limits of the characteristic functions of $\{X^\eps\}$}

We have shown in section \ref{s.cdist} that for the limit of the characteristic functions of the measures of $\{X^\eps\}$, whether in $L^2(D)$ for $d = 2,3$ or in $H^{-1}(D)$ for $d = 4,5$, it suffices to characterize the limiting distribution, for an arbitrarily fixed $\varphi \in L^2(D)$, of $I^\eps_1[\varphi] := (X^\eps,\varphi) = -\eps^{-\frac \alpha 2} (\nu^\eps v^\eps, \psi^\eps)$, where $\psi^\eps = \cG_\eps \varphi$. Following the proof of Lemma \ref{l.ldist}, we introduce $J^\eps_1[\varphi] = -\eps^{-\frac \alpha 2} (\nu^\eps u, \psi)$ with $\psi = \cG \varphi$. For the random field $\nu(x,\omega) = \Phi(g(x,\omega))$ constructed in (L), applying Lemma 4.3 in \cite{BGGJ12_AA}, we have
\begin{equation}
J^\eps_1[\varphi] = -\frac{1}{\eps^{\frac{\alpha}{2}}} \int_{D}{\nu\left( \frac{y}{\eps} \right)u(y) \psi(y) dy}\xrightarrow[\eps \to 
0]{\mathrm{distribution}}
-\sqrt{\kappa} \int_{D} u(y) \psi(y) \ W^{\alpha}(d y),
\label{eq:wcon}
\end{equation}
as convergence in distribution of real random variables. Since $I^\eps_1[\varphi] - J^\eps_1[\varphi]$ converges in $L^2(\Omega)$ to zero, $I^\eps_1[\varphi]$ has the same limiting distribution. This establishes the convergence of characteristic functions of the measures of $X^\eps$.

\subsubsection*{Proof of Theorem \ref{thm:main3}}
 
The arguments above proved that $X^\eps$ converges in law to $X$, in $L^2(D)$ for $d = 2,3$ and in $H^{-1}(D)$ for $d = 4,5$, where $X$ is defined as the right hand side of \eqref{e.main3}. In view of \eqref{e.energy2.l}, the second term in \eqref{e.expansion.l} converges in $L^2(\Omega,L^2(D))$ to the zero function. Similarly, due to \eqref{e.uvw}, \eqref{eq:keyL2} and \eqref{e.mean.opL2.l}, the last term in \eqref{e.expansion.l} converges in $L^1(\Omega,L^2(D))$ to the zero function; see the remark below \eqref{e.mean.opL2.l}. The conclusions of Theorem \ref{thm:main3} follows immediately.

\section{Further Discussions}
\label{s.discussion}

In this section, we make some comments on the assumptions of the random potentials used in this paper, and discuss the case in higher dimensions $d \ge 6$.

\medskip

{\bf An alternative condition for (S).} In the short range correlation setting for $\nu(x,\omega)$, we assumed the condition (S). Upon applying Lemma \ref{l.cumu}, we can bound the (partial) fourth order moment $\Psi_\nu$ by the sum of two terms, each consisting product of a pair of functions $\vartheta\in L^\infty\cap L^1(\R^d)$. However, as remarked earlier, (S) essentially requires the mixing coefficient $\varrho(r)$, and hence $R(|x|)$, to behave like $o(r^{-2d})$ at infinity, much stronger than $R(x)$ being integrable.

We remark that (S) is assumed mainly to simplify the presentation and the $o(r^{-2d})$ decay of $\varrho$ is not necessary. In fact, an alternative assumption used in \cite{BJ-CMS-11} to control fourth order moments is: there exists $\vartheta: \R^d \to \R_+$ in $L^1\cap L^\infty(\R^d)$, such that \eqref{eq:cumu} hold. This is clearly a much more general assumption, and it is satisfied if $\nu(x,\omega) = \Phi(g(x,\omega))$ with $\Phi$ satisfying (L2) and (L3), and $g(x,\omega)$ being a centered stationary Gaussian random field with correlation function $R_g = o(|x|^{-d})$ as $|x| \to \infty$.

The conclusions of Theorem \ref{t.sl.23} still hold if (S) is replaced by the above alternative assumption. Indeed, we only need to modify the control of $\Psi_\nu$ in the proof of Lemma \ref{l.energy2} and in section \ref{s.wt}. For instance, in the first inequality in the proof of Lemma \ref{l.energy2}, we have more but finitely many integrals instead of two on the right hand side. Nevertheless, in all of these integrals, at most one of the function $\vartheta$ has the same variable with one of the Green's function, and all of them can be controlled in the same spirit of the proof of Lemma \ref{l.energy2.l}.

\medskip

{\bf Comparison with the case of non-oscillatory diffusion.} The main results of this paper shows that the framework developed in \cite{B-CLH-08,BJ-CMS-11,BGGJ12_AA}, in the setting of non-oscillatory differential operator with oscillatory random potential, applies even when the differential operator is also oscillatory, as long as we have uniform in $\eps$ control of the Green's functions and their gradients, i.e. \eqref{e.Gsing} and \eqref{e.DGsing}, and provided that there is no random correlation between the diffusion coefficients and the potential. At the formal level, there is no difference in the proof, and the usual strategy using (truncated) series expansion applies. However, the role played by the oscillatory diffusion coefficients becomes prominent in getting the tightness of the measures of $\{X^\eps\}$.

Let us recall the previous method used for tightness in the setting of non-oscillatory differential operator. Set $\cL  : = - \sum_{i,j=1}^d \ol{a}_{ij} \frac{\partial^2}{\partial x_i \partial x_j} + \ol{q}$, and consider the Dirichlet problem $(\cL + \nu^\eps) u^\eps = f$ in $D$ with zero boundary condition. Then $u^\eps$ homogenizes to $u$, the solution of \eqref{e.hpde}. As in \cite{BGGJ12_AA}, the limiting distribution of $\eps^{-\frac d 2} (u^\eps - \E u^\eps)$, say, in the short range correlation setting, is characterize by that of $X^\eps = -\eps^{-\frac d 2} \cG \nu^\eps u$. To prove tightness of the measures $\{X^\eps\}$ in $L^2(D)$, the strategy of \cite{BGGJ12_AA} is to use the spectral representation of $L^2(D)$. Note that $\cL$ is formally self-adjoint and its inverse, i.e. $\cG$, is compact on $L^2(D)$. Hence, $\cL$ admits real eigenvalues $\{\lambda_k\}_{k=1}^\infty$ such that,
$$
0 \le \ol{q} < \lambda_1 \le \lambda_2 \le \cdots, \quad\quad \lambda_k \to \infty \quad\text{as}\quad k \to \infty,
$$
and eigenfunctions $\{\phi_k\}_{k=1}^\infty$, $\|\phi_k\|_{L^2} = 1$, such that
\begin{equation*}
\left\{
\begin{aligned}
\cL \phi_k &= \lambda_k \phi_k, \quad & &\text{in } D,\\
\phi_k &= 0, \quad & &\text{on } \partial D.
\end{aligned}
\right.
\end{equation*}
Moreover, $\{\phi_k\}$ form an orthonormal basis of $L^2(D)$ and we have the following representation of the space $\cH^0(D) = L^2(D)$ and the Sobolev space $\cH^1(D) = H^1_0(D)$; see \cite[Section 6.5]{Evans}: for $s = 0,1$,
\begin{equation}\label{e.cHsdef}
\cH^{s}(D) = \ol{\left\{ f \in C^\infty(D) \;:\; \sum_{k=1}^{\infty} \left(f, \phi_k \right)_{L^2}^2 \lambda^{s}_k < \infty \right\}}, \quad\text{and}\quad \|v\|^2_{\cH^{s}} := \sum_{k=1}^\infty  \left( f, \phi_k\right)^2_{L^2} \lambda_k^s.
\end{equation}
A natural criterion for tightness of (the measures of) $\{X^\eps\}$ is that their measures do not scatter to higher and higher modes. More precisely, let $P_N$ denote the projection operator in $L^2(D)$ to the space $W_N: = \mathrm{span}\,\{\phi_1, \cdots, \phi_N\}$ spanned by the first $N$ modes. Then $\{X^\eps\}$ is tight if $\E\|X^\eps\|_{L^2} \le C$ and
\begin{equation}\label{e.t.L2.proj}
\lim_{N \to \infty}\, \sup_{\eps \in (0,1)} \, \E \|X^\eps - P_N X^\eps\|_{L^2} = 0.
\end{equation}
Using the representation formula in \eqref{e.cHsdef}, and the fact that $\cG \phi_k = (\lambda_k)^{-1} \phi_k$, we have
\begin{equation*}
 \E \|X^\eps - P_N X^\eps\|_{L^2}^2 = \frac{1}{\eps^d} \sum_{k=N+1}^\infty \E (\cG \nu^\eps u, \phi_k)^2 = \frac{1}{\eps^d} \sum_{k=N+1}^\infty \frac{1}{\lambda_k^2} \E (\nu^\eps u, \phi_k)^2.
\end{equation*}
As in section \ref{s.wt}, we have $\sup_{\eps \in (0,1)} \sup_k \E (\nu^\eps u, \phi^k)^2 \le C$. In view of the Weyl's asymptotic formula for the eigenvalues, $\lambda_k \approx k^{\frac 2 d}$ for $k$ large, we conclude that
\begin{equation*}
 \sup_{\eps \in (0,1)} \, \E \|X^\eps - P_N X^\eps\|_{L^2}^2 \lesssim \sum_{k = N+1}^\infty \frac{1}{\lambda^2_k} \lesssim  \sum_{k = N+1}^\infty \frac{1}{k^{\frac 4 d}} .
\end{equation*}
Hence, for $d = 2,3$, we obtain tightness of $\{X^\eps\}$ for free, as byproduct of the analysis in \ref{s.wt}.

In the setting of this paper, $\cL$ above is replaced by $\cL_\eps$ defined in \eqref{e.Leps}. The above approach for tightness fails completely. On the one hand, if we replace the eigen pairs $(\lambda_k, \phi_k)_k$ by $(\lambda^\eps_k, \phi^\eps_k)_k$ where the latter solve the eigenvalue problems associated to $\cL_\eps$, then instead of \eqref{e.t.L2.proj}, we obtain
\begin{equation*}
\lim_{N \to \infty}\, \sup_{\eps \in (0,1)} \, \E \|X^\eps - P^\eps_N X^\eps\|_{L^2} = 0,
\end{equation*}
where $P^\eps_N$ is the projection to $W^\eps_N: = \mathrm{span}\,\{\phi^\eps_1,\cdots,\phi^\eps_N\}$. This is useless because, a priori, the basis $(\phi^\eps_k)_k$ changes with $\eps$, and it is not clear that the union (over $\eps \in (0,1)$) of unit balls in $W^\eps_N$ is still compact for all $N$. On the other hand, if we fix a spectral representation, say, using $(\lambda_k, \phi_k)_k$ defined before. Then we no longer have the relation $\cG_\eps \phi_k = (\lambda_k)^{-1} \phi_k$. It is not difficult to check that $\|\nabla \cG_\eps \phi_k\|_{L^2} \approx \frac{1}{\sqrt{\lambda_k}}$ and this estimate is sharp. An application of Poincar\'e inequality yields that $\|\cG_\eps \phi_k\|_{L^2} \le C/\sqrt{\lambda_k} \approx k^{-\frac 1 d}$, with $C$ uniform in $\eps$ and $k$. It is not clear at all how to improve this estimate. Consequently, in view of the estimate on $I^\eps_1$ in section \ref{s.wt}, we have 
\begin{equation*}
 \sup_{\eps \in (0,1)} \, \E \|X^\eps - P_N X^\eps\|_{L^2}^2 = \frac{1}{\eps^d} \sum_{k = N+1}^\infty \E(\nu^\eps u, \cG_\eps \phi_k)^2  \le \sum_{k=N+1}^\infty C \|\cG_\eps \phi_k\|_{L^2}^2 \approx \sum_{k = N+1}^\infty \frac{1}{\lambda_k} \approx \sum_{k = N+1}^\infty \frac{1}{k^{\frac 2 d}} .
\end{equation*}
This fails to show \eqref{e.t.L2.proj} or the tightness of $\{X^\eps\}$, even for $d = 2$.

In view of the analysis above, we find that the above approach for tightness, which is natural for non-oscillatory differential operators, fails completely in the presence of fast oscillations in the diffusion coefficients. The new approach used in sections \ref{s.cL2} and \ref{s.cHs} is necessary and more stable.

\medskip

{\bf Higher dimensional case.} We comment on the restrictions of dimensions in our results. Let us consider the short range correlation setting only. With respect to the weak topology, we expect that the scaling factor for the random fluctuation should be $\eps^{-\frac{d}{2}}$ in all dimensions. More precisely, for any fixed $\varphi \in L^2(D)$, we expect that $\eps^{-\frac d 2}(u^\eps - \E u^\eps,\varphi)$ converges in distribution for all dimensions. However, in this paper we prove this only for $d \le 7$; see the control of $\Var\,(I^\eps_3)$ in section \ref{s.wt}. This constraint is used to control the truncation term in the series expansion \eqref{e.Neumann}, and we do not expect it to be intrinsic. In fact, if we have more information about the random field, we can truncate later in \eqref{e.fw} until the last term is under control, and use higher order moments estimate in the spirit of Lemma \ref{l.cumu} and Lemma \ref{lem:fourth} to control the terms in between.

When certain norm topology is considered, however, we expect that the dimension plays an intrinsic role in the choice of the functional space. Indeed, for the leading term $X^\eps = -\eps^{-\frac{d}{2}} \cG_\eps \nu^\eps v^\eps$ to converge in law in $L^2(D)$, it is necessary that $\E \|X^\eps\|_{L^2}^2$ is controlled uniformly in $\eps$. Since the homogeneity $|x-y|^{-d+2}$ of the Green's function is relatively more and more singular in higher dimensions, this is simply not true. In the setting of this paper, we expect that convergence in law in $H^{-k}(D)$, for certain $k > 0$ increasing with respect to $d$ could be proved. To establish this in detail, we need again to push the series expansion further and have more information of the random field. We note that other choices of functional space could be convenient for this endeavor as well,  in particular, the $\cH^{-s}$ space defined in \eqref{e.cHsdef} for larger $s$, or certain negative H\"older spaces; we refer to \cite{BGGJ12_AA,GM15} for such considerations.


\bigskip

\appendix

\section{Some technical results}
\label{s.append}

\subsection{Tightness criteria for probability measures in functional spaces}
\label{s.tightness}

We first present a tightness criterion for the probability measures $\{P^{X^\eps}\}_{\eps \in (0,1)}$ on $L^2(D)$ induced by $\{X^\eps(\cdot,\omega)\}$ that are random elements in $H^s_0(D) \subset L^2(D)$, $s \in (0,1]$. 

\begin{theorem}[Tightness in $L^2(D)$]\label{t.tight} Let $\{X^\eps(\cdot,\omega)\}_{\eps\in (0,1)}$ be a family of random fields on the probability space $(\Omega,\cF,\bP)$, $X^\eps(\cdot,\omega) \in H^s_0(D)$ for some $0 < s \le 1$, for each fixed $\eps \in (0,1)$ and $\omega \in \Omega$. Suppose there exists $C > 0$, independent of $\eps$ and $\omega$, such that
\begin{equation}
\label{e.tight}
\E\,\|X^\eps\|_{H^s} \le C.
\end{equation}
Then the family of probability measures $\{P^{X^\eps}\}_{\eps\in(0,1)}$ on $L^2(D)$ is tight.
\end{theorem}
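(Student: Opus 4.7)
The plan is to invoke Prohorov's theorem: since $L^2(D)$ is a separable complete metric space, tightness of $\{P^{X^\eps}\}$ is equivalent to the existence, for every $\delta > 0$, of a compact set $K_\delta \subset L^2(D)$ such that $P^{X^\eps}(K_\delta) \ge 1 - \delta$ uniformly in $\eps \in (0,1)$. The natural candidate for $K_\delta$ is a closed $H^s$ ball, whose compactness in $L^2$ will come from a Rellich-type embedding, and whose probability content we will control by a simple Markov inequality using the hypothesis \eqref{e.tight}.

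The first ingredient is the compact embedding $H^s_0(D) \hookrightarrow L^2(D)$ for $s \in (0,1]$ on a bounded $C^{1,1}$ (in fact Lipschitz suffices) domain $D$. This is classical for $s=1$ and extends to fractional $s$ via the fractional Rellich–Kondrachov theorem (see, e.g., \cite{DPV12}). As a consequence, for any $R > 0$, the set
\begin{equation*}
B_R := \left\{ u \in H^s_0(D) \,:\, \|u\|_{H^s(D)} \le R \right\}
\end{equation*}
is relatively compact in $L^2(D)$. Its closure $K_R := \overline{B_R}^{\,L^2(D)}$ is then compact in $L^2(D)$, and since the $H^s$-norm is lower semicontinuous with respect to $L^2$ convergence (any $L^2$-convergent sequence in $B_R$ admits a weakly $H^s$-convergent subsequence whose limit has $H^s$-norm bounded by $R$), in fact $K_R = B_R$.

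The second ingredient is Markov's inequality. For any $R > 0$, the hypothesis \eqref{e.tight} gives
\begin{equation*}
\bP\!\left( \|X^\eps\|_{H^s(D)} > R \right) \le \frac{\E\,\|X^\eps\|_{H^s(D)}}{R} \le \frac{C}{R},
\end{equation*}
uniformly in $\eps \in (0,1)$. Hence $P^{X^\eps}(B_R) = \bP(X^\eps \in B_R) \ge 1 - C/R$. Given $\delta > 0$, choosing $R = R_\delta := C/\delta$ yields a single compact set $K_{R_\delta} \subset L^2(D)$ with $P^{X^\eps}(K_{R_\delta}) \ge 1 - \delta$ for all $\eps$, which is precisely the tightness of $\{P^{X^\eps}\}_{\eps \in (0,1)}$ on $L^2(D)$.

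The only point requiring care is verifying the $L^2$-closedness of $B_R$ (so that our compact set is really compact rather than merely precompact), but this is routine. There is no serious obstacle here; the statement is essentially a packaging of Rellich compactness plus Markov's inequality, and the real work of the paper lies in establishing the moment bound \eqref{e.tight} for the specific random fields $X^\eps$ in Lemma~\ref{l.Hs}.
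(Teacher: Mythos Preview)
Your proof is correct and follows essentially the same route as the paper: both arguments take a closed $H^s_0(D)$-ball of radius $C/\delta$, invoke the compact embedding $H^s_0(D)\hookrightarrow L^2(D)$ to obtain a compact subset of $L^2(D)$, and then apply Chebyshev/Markov with the uniform bound \eqref{e.tight}. If anything, your treatment is slightly more careful in justifying that the $H^s$-ball is actually closed in $L^2(D)$ (via lower semicontinuity of the $H^s$-norm), a point the paper passes over quickly.
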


\begin{proof} By assumption $P^{X^\eps}$ concentrates on the subspace $H^s_0(D)$. For any fixed $\delta > 0$, set $M_\delta = C\delta^{-1}$ and define
\begin{equation*}
\mathcal{A}_\delta = \{f \in H^s_0(D) \;|\; \|f\|_{H^s} \le M_\delta\}.
\end{equation*}
Clearly, $\mathcal{A}_\delta$ is closed and bounded in $H^s_0(D)$. In light of the fact that the embedding $H^s_0(D) \hookrightarrow L^2(D)$ is compact \cite{PSV13}, we note that $\mathcal{A}_\delta$ is a compact set of $L^2(D)$. Now for any fixed $\eps \in (0,1)$, applying Chebyshev inequality, we find
\begin{equation*}
\begin{aligned}
P^{X^\eps}(\mathcal{A}_\delta) &= \bP(\{X^\eps \in H^s_0(D), \, \|X^\eps\|_{H^s} \le M_\delta\}) = 1 - \bP(\{\|X^\eps\|_{H^s} > M_\delta\})\\
&\ge 1 - \frac{\E \|X^\eps\|_{H^s}}{M_\delta} \ge 1 - \frac{C}{M_\delta} = 1 - \delta.
\end{aligned}
\end{equation*}
Since $\delta$ and $\eps$ are arbitrary, the above shows that $\{P^{X^\eps}\}_{\eps\in(0,1)}$ is tight.
\end{proof}

Next we give a similar tightness criterion for probability measures $\{P^{X^\eps}\}_{\eps \in (0,1)}$ on $H^{-1}(D)$ induced by $\{X^\eps(\cdot,\omega)\}$ which belongs to a smoother space.

\begin{theorem}[Tightness in $H^{-1}(D)$]\label{t.tight2} Let $\{X^\eps(\cdot,\omega)\}_{\eps\in (0,1)}$ be a family of random fields on the probability space $(\Omega,\cF,\bP)$, $X^\eps(\cdot,\omega) \in H^{-s}(D)$ for some $0 \le s < 1$, for each fixed $\eps \in (0,1)$ and $\omega \in \Omega$. Suppose there exists a constant $C > 0$, independent of $\eps$ and $\omega$, such that
\begin{equation}
\label{e.tight2}
\E\ \|X^\eps\|_{H^{-s}} \le C.
\end{equation}
Then the probability measures $\{P^{X^\eps}\}_{\eps\in(0,1)}$ on $H^{-1}(D)$ is tight.
\end{theorem}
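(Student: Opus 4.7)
The plan is to mirror the proof of Theorem \ref{t.tight} almost verbatim, with the compact embedding $H^s_0(D) \hookrightarrow L^2(D)$ replaced by its dual counterpart, the compact embedding $H^{-s}(D) \hookrightarrow H^{-1}(D)$, which is valid precisely in the range $0 \le s < 1$. The underlying idea is Prohorov's tightness criterion: we just need to exhibit, for each $\delta > 0$, a compact set $\mathcal{A}_\delta$ in $H^{-1}(D)$ with $P^{X^\eps}(\mathcal{A}_\delta) \ge 1-\delta$ uniformly in $\eps$.

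The first step is to record the compact embedding $H^{-s}(D) \hookrightarrow H^{-1}(D)$ for $0 \le s < 1$. This follows by duality from the compact embedding $H^1_0(D) \hookrightarrow H^s_0(D)$ on bounded $C^{1,1}$ domains, itself a standard consequence of Rellich--Kondrachov for fractional Sobolev spaces (see e.g.\ \cite{DPV12}). Alternatively, using the spectral representation in \eqref{e.cHsdef}, the embedding corresponds to the compact map $\{c_k\} \mapsto \{c_k\}$ between weighted $\ell^2$ spaces with weights $\lambda_k^{-s}$ and $\lambda_k^{-1}$, which is compact because $\lambda_k^{-1}/\lambda_k^{-s} = \lambda_k^{s-1} \to 0$ as $k \to \infty$.

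Given $\delta > 0$, I would set $M_\delta = C/\delta$ and define
\begin{equation*}
\mathcal{A}_\delta := \{f \in H^{-s}(D) \;:\; \|f\|_{H^{-s}(D)} \le M_\delta\}.
\end{equation*}
This set is closed and bounded in $H^{-s}(D)$, so by the compact embedding it is relatively compact in $H^{-1}(D)$. To verify it is closed in $H^{-1}(D)$: if $(f_n) \subset \mathcal{A}_\delta$ and $f_n \to f$ in $H^{-1}(D)$, then $(f_n)$ is bounded in the reflexive Hilbert space $H^{-s}(D)$ and hence has a subsequence converging weakly in $H^{-s}(D)$ to some $g$ with $\|g\|_{H^{-s}} \le M_\delta$; weak convergence in $H^{-s}$ implies weak convergence in $H^{-1}$, which forces $g = f$, so $f \in \mathcal{A}_\delta$. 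Hence $\mathcal{A}_\delta$ is compact in $H^{-1}(D)$.

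Finally, Chebyshev's inequality together with the hypothesis \eqref{e.tight2} yields
\begin{equation*}
P^{X^\eps}(\mathcal{A}_\delta) = 1 - \bP(\|X^\eps\|_{H^{-s}} > M_\delta) \ge 1 - \frac{\E\,\|X^\eps\|_{H^{-s}}}{M_\delta} \ge 1 - \frac{C}{M_\delta} = 1 - \delta,
\end{equation*}
uniformly in $\eps \in (0,1)$. Since $\delta$ is arbitrary, the family $\{P^{X^\eps}\}$ is tight on $H^{-1}(D)$. The only nontrivial input is the compact embedding together with the closedness check in the previous paragraph; once these are in place, the argument is identical in form to Theorem \ref{t.tight} and presents no real obstacle.
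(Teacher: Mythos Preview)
Your proposal is correct and follows essentially the same approach as the paper: invoke the compact embedding $H^1_0(D)\hookrightarrow H^s_0(D)$, pass by duality to the compact embedding $H^{-s}(D)\hookrightarrow H^{-1}(D)$, and then repeat verbatim the Chebyshev-plus-compact-ball argument of Theorem~\ref{t.tight}. You are slightly more thorough than the paper in that you explicitly verify closedness of $\mathcal{A}_\delta$ in $H^{-1}(D)$ and offer a spectral alternative for the compactness of the embedding, but neither addition changes the structure of the proof.
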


\begin{proof} Since $D$ is a bounded open set with regular boundary, the embedding $H^1_0(D) \hookrightarrow H^s_0(D)$, for any $0 \le s < 1$, is compact \cite[Theorem 1.4.3.2]{Grisvard}. By duality, the embedding $H^{-s}(D) \hookrightarrow H^{-1}(D)$ is also compact. The rest of the proof is exactly the same as in the proof of the theorem above.
\end{proof}

\subsection{Functions of long range correlated Gaussian random field}

Here we record some results for the random potential $\nu(x,\omega) = \Phi(g(x,\omega))$ that is constructed in (L). In particular, we express the asymptotic behavior of its correlation function $R(x)$, and derive a (partial) fourth order moment for $\nu$.

\subsubsection{Autocorrelation function of the long range model}

\begin{lemma}\label{lem:tail}
Assume {\upshape(L1)(L2)} and let $\nu(x,\omega)$ be as constructed there. Set $V_1 = \E\{g_0 \Phi(g_0)\}$, $g_x$ being the underlying Gaussian random field in {\upshape(L)}. Then there exist constants $T, C >0$, depending only on the universal parameters, such that the autocorrelation function $R(x)$ of $q$ satisfies
\begin{equation}
|R(x) - V_1^2 R_g(x)| \le C R_g^2(x), \quad \text{for all } |x| \ge T,
\label{eq:tail1}
\end{equation}
where $R_g$ is the correlation function of $g$. Further,
\begin{equation}
|\E\{g(y) q(y+x)\}  - V_1 R_g(x)| \le C R_g^2(x), \quad \text{for all } |x| 
\ge T.
\label{eq:tail2}
\end{equation}
\end{lemma}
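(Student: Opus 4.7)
The plan is to expand $\Phi$ in Hermite polynomials and then apply the diagram (Mehler) formula for products of Hermite polynomials evaluated at correlated Gaussians.

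First I would write the (probabilists') Hermite expansion
\begin{equation*}
\Phi(s) = \sum_{k=1}^\infty \frac{c_k}{k!} H_k(s), \qquad c_k := \int_\R H_k(s) \Phi(s) \frac{e^{-s^2/2}}{\sqrt{2\pi}}\, ds,
\end{equation*}
convergent in $L^2(\R, e^{-s^2/2} ds/\sqrt{2\pi})$. Assumption (L2) gives $c_0 = 0$ (this is the Hermite-rank-one condition) and, since $H_1(s)=s$, identifies $c_1 = V_1$. Boundedness of $\Phi$ (from (L2)) yields $\sum_{k \ge 1} c_k^2/k! = \E\,\Phi(g_0)^2 \le M^2$, where $g_0$ is a standard normal.

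Next I would invoke the diagram formula: for jointly standard normals $X,Y$ with correlation $r$,
\begin{equation*}
\E\bigl[ H_j(X) H_k(Y) \bigr] = \delta_{jk}\, k!\, r^k.
\end{equation*}
Applying this term-by-term to $\Phi(g(y))\Phi(g(y+x))$ with $r = R_g(x)$, and justifying the interchange of sum and expectation via $L^2(\bP)$-convergence of the Hermite partial sums together with Cauchy--Schwarz, I obtain the exact identity
\begin{equation*}
R(x) = \E\bigl[\nu(y)\nu(y+x)\bigr] = \sum_{k=1}^\infty \frac{c_k^2}{k!}\, R_g(x)^k.
\end{equation*}

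To finish the first estimate, pick $T > 0$ so that $|R_g(x)| \le \tfrac{1}{2}$ for all $|x| \ge T$; this is possible since $R_g(x) \to 0$ as $|x| \to \infty$ by (L1). Subtracting the leading $k=1$ term and summing a geometric series,
\begin{equation*}
\bigl| R(x) - V_1^2 R_g(x) \bigr| = \Bigl| \sum_{k=2}^\infty \frac{c_k^2}{k!}\, R_g(x)^k \Bigr| \le R_g(x)^2 \sum_{k=2}^\infty \frac{c_k^2}{k!}\, 2^{-(k-2)} \le 4 M^2 R_g(x)^2,
\end{equation*}
which proves \eqref{eq:tail1}. For \eqref{eq:tail2}, note $g(y) = H_1(g(y))$, so the same Mehler identity applied to the Hermite expansion of $\Phi(g(y+x))$ gives the \emph{exact} equality $\E[g(y)\,\nu(y+x)] = c_1 R_g(x) = V_1 R_g(x)$; since $\E g(y) = 0$, this equals $\E[g(y) q(y+x)]$, so the claimed bound holds with constant $0$. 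The only delicate step is the termwise expansion of $R(x)$, but this is routine once one invokes the completeness of $\{H_k\}$ in $L^2$ of the Gaussian measure and the boundedness of $\Phi$.
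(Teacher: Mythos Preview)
Your proof is correct and follows the standard Hermite--expansion / Mehler--formula argument; this is precisely the approach in the references \cite{BGMP-AA-08, BGGJ12_AA} that the paper cites in lieu of a proof. One minor remark: your observation that \eqref{eq:tail2} in fact holds as an \emph{exact} identity $\E[g(y)q(y+x)] = V_1 R_g(x)$ (so the bound holds with constant $0$) is correct and slightly sharper than what is stated.
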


The proof of this result can be found in \cite{BGMP-AA-08, BGGJ12_AA}. It says that $\nu(x,\omega)$ inherits the heavy tail from the underlying Gaussian random field. The next result describes estimates on the integrals of $R$, possibly against some potential function that has singularity at the origin.

\begin{lemma}\label{lem:RLp}
Assume {\upshape(L1)(L2)} and let $\nu(x,\omega)$ be as constructed there. Let $R(x)$ be the correlation function of $\nu$, $R^\eps(x)$ be the rescaled function $R(\eps^{-1} x)$, and $B_\rho$ be the open ball centered at zero with radius $\rho$. Then there exists $C > 0$, depending only on the universal parameters, on $\rho$ and $p$ below, such that
\begin{itemize}
\item[\upshape(i)] For any $1 \le p < \infty$,
\begin{equation}
\| R^\eps \|_{L^p(B_\rho)} \le \left\{
\begin{aligned}
& C \eps^\alpha, & & \alpha p < d,\\
& C \eps^\alpha |\log \eps|^{\frac 1 p}, & & \alpha p = d,\\
& C \eps^{\frac d p}, & & \alpha p > d.
\end{aligned}
\right.
\label{eq:RLp}
\end{equation}

\item[\upshape(ii)] For $d = 2,4$, and for any $1 \le p < \infty$,
\begin{equation}\label{eq:RLp.4}
\left\| R^\eps(y) |\log|y|| \right\|_{L^p(B_\rho)} \le \left\{
\begin{aligned}
& C \eps^\alpha, & & \alpha p < d,\\
& C \eps^{\alpha} |\log \eps|^{1+{\frac 1 p}}, & & \alpha p = d,\\
& C \eps^{\frac d p} |\log \eps|, & & \alpha p > d.
\end{aligned}
\right.
\end{equation}

\item[\upshape(iii)] For $d \ge 3$, and for any $1 \le p < \frac{d}{d-2}$,
\begin{equation}\label{eq:RLp.2}
\left\| \frac{R^\eps}{|y|^{d-2}} \right\|_{L^p(B_\rho)} \le \left\{
\begin{aligned}
& C \eps^\alpha, & & \alpha p < d - p(d-2),\\
& C \eps^\alpha |\log \eps|^{\frac 1 p}, & & \alpha p = d - p(d-2),\\
& C \eps^{{\frac d p} - (d-2)}, & & \alpha p > d - p(d-2).
\end{aligned}
\right.
\end{equation}

\item[\upshape(iv)] For $d \ge 5$, and for any $1 \le p < \frac{d}{d-4}$,
\begin{equation}\label{eq:RLp.5}
\left\| \frac{R^\eps}{|y|^{d-4}} \right\|_{L^p(B_\rho)} \le \left\{
\begin{aligned}
& C \eps^\alpha, & & \alpha p < d - p(d-4),\\
& C \eps^\alpha |\log \eps|^{\frac 1 p}, & & \alpha p = d - p(d-4),\\
& C \eps^{{\frac d p} - (d-4)}, & & \alpha p > d - p(d-4).
\end{aligned}
\right.
\end{equation}
\end{itemize}
\end{lemma}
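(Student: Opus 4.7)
The plan is to follow the two-region split already used in the proof of Lemma \ref{lem:keyL2} (see \eqref{e.D.dec}): decompose the integration ball as $B_\rho = D_1 \cup D_2$ where $D_1 = B_{\eps T} \cap B_\rho$ and $D_2 = B_\rho \setminus B_{\eps T}$. On $D_1$, use the pointwise bound $|R^\eps| \le M^2$, which follows from (L2) and Cauchy--Schwarz. On $D_2$, Lemma \ref{lem:tail} supplies $|R^\eps(y)| \le C\eps^\alpha |y|^{-\alpha}$. All four parts then reduce to radial integrations on these two pieces, after which one keeps the dominant contribution.

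For part (i), the inner integral is bounded by $\int_{D_1} M^{2p}\, dy \le C\eps^d$, contributing $\eps^{d/p}$ to $\|R^\eps\|_{L^p(B_\rho)}$. The outer integral becomes, up to constants,
\[
\eps^{\alpha p} \int_{\eps T}^{\rho} r^{d-1-\alpha p}\, dr,
\]
which has three distinct asymptotic regimes determined by the sign of $d-\alpha p$: for $\alpha p < d$ it converges to a finite limit as $\eps \to 0$ (outer contribution $\eps^\alpha$), for $\alpha p = d$ it is logarithmically divergent (outer contribution $\eps^\alpha |\log \eps|^{1/p}$), and for $\alpha p > d$ the lower endpoint $\eps T$ dictates the scaling (outer contribution $\eps^{d/p}$). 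Since $d/p > \alpha$ exactly when $\alpha p < d$, the maximum of the inner and outer contributions in each regime matches exactly the right-hand side of \eqref{eq:RLp}.

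For parts (ii)--(iv), I would simply insert the extra weight $w(y) \in \{|\log|y||^p, |y|^{-p(d-2)}, |y|^{-p(d-4)}\}$ into both integrals. The restrictions $p < d/(d-2)$ and $p < d/(d-4)$ in (iii) and (iv) are precisely the conditions that make $w$ locally integrable at the origin; combined with the fact that the weight shifts the polar power by $p(d-2)$ or $p(d-4)$, they translate into new threshold conditions $\alpha p = d - p(d-2)$ and $\alpha p = d - p(d-4)$ separating the three regimes of the outer integral. The inner contribution correspondingly becomes $\eps^{d-p(d-2)}$ or $\eps^{d-p(d-4)}$, giving the exponents $d/p - (d-2)$ and $d/p - (d-4)$ in the large-$\alpha p$ case. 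Part (ii) is slightly different: the weight $|\log|y||^p$ is integrable at the origin for every $p \ge 1$, but it contributes an extra factor $|\log \eps|^p$ in the inner integral, and through the identity $\int_{\eps T}^\rho r^{-1} |\log r|^p\, dr \sim |\log\eps|^{p+1}/(p+1)$ an extra factor $|\log \eps|^{p+1}$ at the threshold $\alpha p = d$; taking $p$-th roots yields the factors $|\log \eps|$ and $|\log \eps|^{1+1/p}$ announced in \eqref{eq:RLp.4}.

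The whole argument is a case analysis via elementary polar-coordinate integration, so there is no essential obstacle. The only real bookkeeping care is required in the borderline cases $\alpha p = d$, $\alpha p = d - p(d-2)$, and $\alpha p = d - p(d-4)$, where logarithmic terms from the outer integral must be combined with the $\eps^{d/p}$ scale from the inner integral; I expect this balancing to be the main source of minor errors, but it is not conceptually difficult.
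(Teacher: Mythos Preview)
Your proposal is correct and follows essentially the same approach as the paper: the paper also uses the decomposition $B_\rho = D_1 \cup D_2$ from \eqref{e.D.dec}, bounds $|R^\eps|$ by a constant on $D_1$ and by $C\eps^\alpha|y|^{-\alpha}$ on $D_2$ via Lemma \ref{lem:tail}, and reduces everything to elementary radial integrals with the same trichotomy on the sign of the exponent. The paper in fact only writes out the details for part (iv) and declares the rest analogous, so your plan is if anything more explicit than what appears there.
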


\begin{proof} The proof of (i) can be found in \cite{BGGJ12_AA}. We present the proof of (iv) here; the other terms above can be proved in exactly the same way. Using the decomposition \eqref{e.D.dec}, we control the integral of $|R^\eps|^p |y|^{-p(d-4)}$ on $D_1$ and $D_2$ separately. 

On $D_1$, we have
\begin{equation*}
\int_{B_{T\eps}} \frac{|R^\eps|^p}{|y|^{p(d-4)}} dy \le \|R^\eps\|_{L^\infty}^p \int_0^{T\eps} \frac{r^{d-1}}{r^{p(d-4)}} dr \le C\eps^{d-p(d-4)}.
\end{equation*}

On $D_2$, we have
\begin{equation*}
\int_{B_\rho\setminus B_{T\eps}} \frac{|R^\eps|^p}{|y|^{p(d-4)}} dy \le \eps^{\alpha p} \int_{T\eps}^\rho \frac{r^{d-1}}{r^{p(d-4) + \alpha p}} dr =
\begin{cases}
C\eps^{\alpha p} \left(\left. r^{d- \alpha p - p(d-4)}\right)\right\rvert^\rho_{T\eps}, \quad& \alpha p \ne d - p(d-4),\\
C\eps^{\alpha p} \left(\left. \log r\right)\right\rvert^\rho_{T\eps}, \quad& \alpha p = d - p(d-4).\\
\end{cases}
\end{equation*}
Combining these estimates, we proved \eqref{eq:RLp.5}.
\end{proof}

\subsubsection{Fourth-order moments of $\nu(x,\omega)$}

Finally, we present a non-asymptotic estimate for the four-moments 
of $\nu(x,\omega)$ constructed in {\upshape (L1)(L2)}, with the additional assumption 
{\upshape (L3)}. In the following, we denote by 
$\mathcal{U}$ the collections of two pairs of unordered numbers in the set $\{1,2,3,4\}$, 
\begin{equation}
\mathcal{U} := \big\{ p = \{\big(p(1), p(2)\big), \big(p(3), p(4)\big)\}  
~|~ p(i) \in \{1,2,3,4\},\, p(1) \ne p(2), \, p(3) \ne p(4) \big\}.
\label{eq:Udef}
\end{equation}
As members in a set, the pairs $(p(1),p(2))$ and $(p(3),p(4))$ are required 
to be distinct; however, the two pairs can have one common index. There are three 
elements in $\mathcal{U}$ that collect all four numbers.  They are 
precisely $\{(1,2), (3,4)\}$, $\{(1,3),(2,4)\}$ and $\{(1,4), (2,3)\}$. Let 
$\mathcal{U}_*$ denote the subset formed by these three elements, and 
let $\mathcal{U}^*$ be its complement. 

\begin{lemma}\label{lem:fourth}
Assume {\upshape(L)} and let $\nu(x,\omega)$ be as constructed there. Then there exists $\vartheta : \R^d \to \R_+$, bounded and satisfying $\vartheta(x) \sim |x|^{-\alpha}$ as $|x| \to \infty$, and some $C > 0$ depending only on the universal parameters, such that for any four points $\{x_i \in \R^d; ~ 1 \le i \le 4\}$,
\begin{equation}
   \left| \E\prod_{i=1}^4 \nu(x_i)  - \sum_{p \in \mathcal{U}_*} 
R(x_{p(1)} - x_{p(2)})R(x_{p(3)}-x_{p(4)}) \right|
   \le  C \sum_{p \in \mathcal{U}^*} \vartheta(x_{p(1)} - x_{p(2)})\vartheta(x_{p(3)} - 
x_{p(4)}).
\label{eq:cumu}
\end{equation}
\end{lemma}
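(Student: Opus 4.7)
The plan is to use the Fourier representation of $\Phi$ together with the Gaussian characteristic function of the vector $(g(x_1),\ldots,g(x_4))$ to expand the fourth-order moment into an absolutely convergent diagrammatic series, and then to identify the three Gaussian-type pairing contributions indexed by $\mathcal{U}_*$ as the dominant ones.

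By (L3) we may write $\Phi(s) = \int_\R \hat\Phi(\xi) e^{i\xi s}\,d\xi$. Setting $r_{ij} := R_g(x_i-x_j)$ for $i\neq j$ and $r_{ii}=1$, the joint Gaussian structure of $(g(x_i))$ gives
$$\E \prod_{i=1}^4 \nu(x_i) = \int_{\R^4} \Biggl(\prod_{i=1}^4 \hat\Phi(\xi_i)\, e^{-\xi_i^2/2}\Biggr) \exp\Bigl(-\sum_{1\le i<j\le 4} \xi_i\xi_j r_{ij}\Bigr)\,d\xi.$$
Expanding the outer exponential as a Taylor series (justified by $|r_{ij}|\le 1$ and the polynomial-weighted integrability of $\hat\Phi$ in (L3)) produces an absolutely convergent series indexed by symmetric nonnegative integer matrices $\mathbf{m}=(m_{ij})$. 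Each term is a product of $\prod_{i<j}r_{ij}^{m_{ij}}$ times one-dimensional Gaussian-weighted moments $\int \xi^{n_i(\mathbf{m})} \hat\Phi(\xi) e^{-\xi^2/2}\,d\xi$, where $n_i(\mathbf{m}):=\sum_{j\neq i} m_{ij}$ is the degree of vertex $i$ in the multigraph with edge multiplicities $\mathbf{m}$.

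The Hermite rank-one condition (L2) translates exactly into $\int \hat\Phi(\xi)e^{-\xi^2/2}\,d\xi = 0$, so every term with some $n_i=0$ drops out. The surviving diagrams therefore satisfy $n_i\ge 1$ for all $i$, whence $\sum_{i<j} m_{ij}=\tfrac12\sum_i n_i \ge 2$. The minimal case $\sum m_{ij}=2$ forces the multigraph to be a perfect matching of $\{1,2,3,4\}$, giving exactly the three elements of $\mathcal{U}_*$; the corresponding contributions add up to $c_1^4\sum_{p\in\mathcal{U}_*} r_{p(1)p(2)}\, r_{p(3)p(4)}$ with $c_1:=\int\xi\hat\Phi(\xi)e^{-\xi^2/2}\,d\xi$. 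Performing the same Fourier expansion for $R(x)$ itself yields $R(x) = c_1^2(-r_{ij}) + O(R_g(x)^2)$, consistent with Lemma~\ref{lem:tail}; substituting this back shows that the perfect-matching contribution coincides, up to an $O(R_g^3)$ error, with $\sum_{p\in\mathcal{U}_*} R(x_{p(1)}-x_{p(2)})R(x_{p(3)}-x_{p(4)})$.

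It remains to control (i) the $O(R_g^2)$ corrections in the relation between $R$ and $R_g$, and (ii) all diagrams with $\sum m_{ij}\ge 3$. For any such surviving diagram, pigeonhole on the total edge count forces two edges to share a common vertex, producing a factor of the form $r_{ab}r_{cd}$ with $\{(a,b),(c,d)\}\in\mathcal{U}^*$; bounding the remaining edge factors by $1$ and summing absolutely (again using the $(1+|\xi|^3)$-weighted integrability of $\hat\Phi$ from (L3) to control the Gaussian moments) yields a bound of the desired form $C \sum_{p\in\mathcal{U}^*}\vartheta(x_{p(1)}-x_{p(2)})\vartheta(x_{p(3)}-x_{p(4)})$, where $\vartheta(x)$ can be taken as any bounded majorant of $|R_g(x)|$ with the same $|x|^{-\alpha}$ tail. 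The main obstacle is the combinatorial bookkeeping needed to verify that every surviving diagram beyond the three perfect matchings truly produces a non-perfect pairing factor and that the resulting infinite diagrammatic series converges uniformly in $x_1,\ldots,x_4$; this is where the polynomial growth bound on $\hat\Phi$ in (L3) becomes essential.
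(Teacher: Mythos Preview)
The paper does not supply its own proof of this lemma; it simply cites \cite[Proposition~4.1]{BJ-CMS-11} and remarks that one may take $\vartheta=R$. Your Fourier--diagrammatic approach is in fact the natural strategy and is essentially what underlies that reference.

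There is, however, a real gap in the organization of your argument. You propose to extract only the three \emph{lowest-order} perfect matchings (those with $\sum_{i<j} m_{ij}=2$), approximate $R$ by its leading term, and then bound separately (i) the $O(R_g^2)$ corrections in $R$ and (ii) all diagrams with $\sum m_{ij}\ge 3$. This does not work: the diagram $m_{12}=2,\,m_{34}=1$ survives (all $n_i\ge 1$) and produces the factor $r_{12}^2 r_{34}$, and your pigeonhole only yields the repeated edge $(1,2)$, which is not an element of $\mathcal{U}^*$. Concretely, taking $x_1=x_2$, $x_3=x_4$, and $|x_1-x_3|$ large makes $r_{12}^2 r_{34}=1$ while every $\mathcal{U}^*$ term on the right-hand side is small, so neither (i) nor (ii) can be bounded individually by the claimed sum. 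What actually happens is that these problematic contributions \emph{cancel exactly}: the sum of all diagrams supported only on the pair of edges $\{(1,2),(3,4)\}$ (with arbitrary multiplicities) equals precisely $R(x_1-x_2)R(x_3-x_4)$, and likewise for the other two perfect matchings. Once this exact subtraction is made, every remaining diagram uses at least three \emph{distinct} edge types on four vertices, and then two of them must share a vertex --- at which point your pigeonhole argument goes through.

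A secondary point: the assertion that the full Taylor series is ``absolutely convergent'' thanks to (L3) is not obvious as stated. The cubic weight in (L3) controls only $\int|\hat\Phi(\xi)|\,|\xi|^k\,d\xi$ for $k\le 3$, and while the Gaussian factor $e^{-\xi_i^2/2}$ helps, a crude bound on $|\sum_{i<j}\xi_i\xi_j r_{ij}|$ can exceed $\tfrac12\sum\xi_i^2$ when some $r_{ij}$ are close to $1$. One clean route is to work instead with the Hermite expansion $\Phi=\sum_{k\ge 1}(V_k/k!)H_k$ and the diagram formula for $\E\prod_i H_{k_i}(g(x_i))$, where convergence follows from $\sum_k V_k^2/k! = \E\,\Phi(g)^2 <\infty$; alternatively, truncate the Taylor expansion at a fixed finite order with an integral remainder, which is where the cubic weight in (L3) is actually used.
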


We refer to \cite[Proposition 4.1]{BJ-CMS-11} for the proof of this result. In particular, $\vartheta$ above can be chosen as the autocorrelation function $R(x)$ of $\nu(x,\omega)$. As discussed earlier, \eqref{eq:cumu} can be viewed as an alternative for the estimates in Lemma \ref{l.cumu}.

\bibliographystyle{abbrv}
\bibliography{bj_pr}

\begin{thebibliography}{10}

\bibitem{AL87}
M.~Avellaneda and F.-H. Lin.
\newblock Compactness methods in the theory of homogenization.
\newblock {\em Comm. Pure Appl. Math.}, 40(6):803--847, 1987.

\bibitem{AL91_Lp}
M.~Avellaneda and F.-H. Lin.
\newblock {$L^p$} bounds on singular integrals in homogenization.
\newblock {\em Comm. Pure Appl. Math.}, 44(8-9):897--910, 1991.

\bibitem{Chemin}
H.~Bahouri, J.-Y. Chemin, and R.~Danchin.
\newblock {\em Fourier analysis and nonlinear partial differential equations},
  volume 343 of {\em Grundlehren der Mathematischen Wissenschaften [Fundamental
  Principles of Mathematical Sciences]}.
\newblock Springer, Heidelberg, 2011.

\bibitem{B-CLH-08}
G.~Bal.
\newblock Central limits and homogenization in random media.
\newblock {\em Multiscale Model. Simul.}, 7(2):677--702, 2008.

\bibitem{BGGJ12_AA}
G.~Bal, J.~Garnier, Y.~Gu, and W.~Jing.
\newblock Corrector theory for elliptic equations with long-range correlated
  random potential.
\newblock {\em Asymptot. Anal.}, 77(3-4):123--145, 2012.

\bibitem{BGMP-AA-08}
G.~Bal, J.~Garnier, S.~Motsch, and V.~Perrier.
\newblock Random integrals and correctors in homogenization.
\newblock {\em Asymptot. Anal.}, 59(1-2):1--26, 2008.

\bibitem{BJ-DCDS-10}
G.~Bal and W.~Jing.
\newblock Homogenization and corrector theory for linear transport in random
  media.
\newblock {\em Discrete Contin. Dyn. Syst.}, 28(4):1311--1343, 2010.

\bibitem{BJ-CMS-11}
G.~Bal and W.~Jing.
\newblock Corrector theory for elliptic equations in random media with singular
  {G}reen's function. {A}pplication to random boundaries.
\newblock {\em Commun. Math. Sci.}, 19(2):383--411, 2011.

\bibitem{B-CPM}
P.~Billingsley.
\newblock {\em Convergence of probability measures}.
\newblock Wiley Series in Probability and Statistics: Probability and
  Statistics. John Wiley \& Sons Inc., New York, second edition, 1999.
\newblock A Wiley-Interscience Publication.

\bibitem{BP-99}
A.~Bourgeat and A.~Piatnitski.
\newblock Estimates in probability of the residual between the random and the
  homogenized solutions of one-dimensional second-order operator.
\newblock {\em Asymptot. Anal.}, 21(3-4):303--315, 1999.

\bibitem{DPV12}
E.~Di~Nezza, G.~Palatucci, and E.~Valdinoci.
\newblock Hitchhiker's guide to the fractional {S}obolev spaces.
\newblock {\em Bull. Sci. Math.}, 136(5):521--573, 2012.

\bibitem{Evans}
L.~C. Evans.
\newblock {\em Partial differential equations}, volume~19 of {\em Graduate
  Studies in Mathematics}.
\newblock American Mathematical Society, Providence, RI, 1998.

\bibitem{FOP-82}
R.~Figari, E.~Orlandi, and G.~Papanicolaou.
\newblock Mean field and {G}aussian approximation for partial differential
  equations with random coefficients.
\newblock {\em SIAM J. Appl. Math.}, 42(5):1069--1077, 1982.

\bibitem{GO14}
A.~Gloria and F.~Otto.
\newblock Quantitative results on the corrector equation in stochastic
  homogenization.
\newblock {\em Preprint}, 2014.

\bibitem{G06}
G.~Griso.
\newblock Interior error estimate for periodic homogenization.
\newblock {\em Anal. Appl. (Singap.)}, 4(1):61--79, 2006.

\bibitem{Grisvard}
P.~Grisvard.
\newblock {\em Elliptic problems in nonsmooth domains}, volume~24 of {\em
  Monographs and Studies in Mathematics}.
\newblock Pitman (Advanced Publishing Program), Boston, MA, 1985.

\bibitem{GM15}
Y.~Gu and J.-C. Mourrat.
\newblock Scaling limit of flucutations in stochastic homogenization.
\newblock {\em Preprint}, 2015.

\bibitem{HPP13}
M.~Hairer, E.~Pardoux, and A.~Piatnitski.
\newblock Random homogenisation of a highly oscillatory singular potential.
\newblock {\em Stoch. Partial Differ. Equ. Anal. Comput.}, 1(4):571--605, 2013.

\bibitem{Jikov_book}
V.~V. Jikov, S.~M. Kozlov, and O.~A. Ole{\u\i}nik.
\newblock {\em Homogenization of differential operators and integral
  functionals}.
\newblock Springer-Verlag, Berlin, 1994.

\bibitem{KLS12_ARMA}
C.~E. Kenig, F.~Lin, and Z.~Shen.
\newblock Convergence rates in {$L^2$} for elliptic homogenization problems.
\newblock {\em Arch. Ration. Mech. Anal.}, 203(3):1009--1036, 2012.

\bibitem{KLS14_GN}
C.~E. Kenig, F.~Lin, and Z.~Shen.
\newblock Periodic homogenization of {G}reen and {N}eumann functions.
\newblock {\em Comm. Pure Appl. Math.}, 67(8):1219--1262, 2014.

\bibitem{Khoshnevisan}
D.~Khoshnevisan.
\newblock {\em Multiparameter processes}.
\newblock Springer Monographs in Mathematics. Springer-Verlag, New York, 2002.
\newblock An introduction to random fields.

\bibitem{LL-A}
E.~H. Lieb and M.~Loss.
\newblock {\em Analysis}, volume~14 of {\em Graduate Studies in Mathematics}.
\newblock American Mathematical Society, Providence, RI, second edition, 2001.

\bibitem{MaO14}
D.~Marahrens and F.~Otto.
\newblock On annealed elliptic green function estimates.
\newblock {\em Preprint}, 2014.

\bibitem{MV97}
S.~Moskow and M.~Vogelius.
\newblock First-order corrections to the homogenised eigenvalues of a periodic
  composite medium. {A} convergence proof.
\newblock {\em Proc. Roy. Soc. Edinburgh Sect. A}, 127(6):1263--1299, 1997.

\bibitem{MN15}
J.-C. Mourrat and J.~Nolen.
\newblock Scaling limit of the corrector in stochastic homogenization.
\newblock {\em Preprint}, 2015.

\bibitem{MO14}
J.-C. Mourrat and F.~Otto.
\newblock Correlation structure of the corrector in stochastic homogenization.
\newblock {\em Preprint}, 2014.

\bibitem{PSV13}
G.~Palatucci, O.~Savin, and E.~Valdinoci.
\newblock Local and global minimizers for a variational energy involving a
  fractional norm.
\newblock {\em Ann. Mat. Pura Appl. (4)}, 192(4):673--718, 2013.

\bibitem{Partha}
K.~R. Parthasarathy.
\newblock {\em Probability measures on metric spaces}.
\newblock Probability and Mathematical Statistics, No. 3. Academic Press, Inc.,
  New York-London, 1967.

\end{thebibliography}
\end{document}